\newtheorem{assumptionA}{\textbf{A}\hspace{-3pt}}
\Crefname{assumptionA}{\textbf{A}\hspace{-3pt}}{\textbf{H}\hspace{-3pt}}
\crefname{assumptionA}{\textbf{A}}{\textbf{A}}
\newtheorem{assumptionB}{\textbf{B}\hspace{-3pt}}
\Crefname{assumptionB}{\textbf{B}\hspace{-3pt}}{\textbf{H}\hspace{-3pt}}
\crefname{assumptionB}{\textbf{B}}{\textbf{B}}
\newtheorem{thm}{Theorem}
\newtheorem{lemma}{Lemma}
\newtheorem{prop}{Proposition}
\newtheorem{note}{Note}
\newtheorem{rem}{Remark}
\title{Convergence Of The Unadjusted Langevin Algorithm For Discontinuous Gradients}
\author[1]{Tim Johnston}
\author[1,2,3]{Sotirios Sabanis}
\affil[1]{\textit{University of Edinburgh, United Kingdom.}}
\affil[2]{\textit{The Alan Turing Institute, United Kingdom.}}
\affil[3]{\textit{National Technical University of Athens, Greece.}}
\date{\today}
\begin{document}

\maketitle
\begin{abstract}
   We demonstrate that for strongly log-convex densities whose potentials are discontinuous on manifolds, the ULA algorithm converges with stepsize bias of order $1/2$ in Wasserstein-p distance. Our resulting bound is then of the same order as the convergence of ULA for gradient Lipschitz potential. Additionally, we show that so long as the gradient of the potential obeys a growth bound (therefore imposing no regularity condition), the algorithm has stepsize bias  of order $1/4$. We therefore unite two active areas of research: i) the study of numerical methods for SDEs with discontinuous coefficients and ii) the study of the non-asymptotic bias of the ULA algorithm (and variants). In particular this is the first result of the former kind we are aware of on an unbounded time interval.
\end{abstract}

\section{Introduction}
In this paper we consider the overdamped Langevin SDE
\begin{equation}\label{eq: main SDE}
dY_t = -\nabla U(Y_t) dt+\sqrt{\frac{2}{\beta}}dW_t, \;\;\; t\geq 0,
\end{equation}
where $U:\mathbb{R}^d\to\mathbb{R}$ is a potential, $\beta>0$ is the `inverse temperature parameter' and $(W_t)_{t\geq 0}$ is an $\mathbb{R}^d$-valued Wiener martingale independent of the initial condition $\xi$. It is well known that under weak conditions \eqref{eq: main SDE} admits the unique invariant measure $\pi_\beta \sim e^{-\beta U}$. Given this fact, the overdamped Langevin diffusion has been used as a basis for a variety of sampling and optimization algorithms. In this paper we consider the Unadjusted Langevin Algorithm, or ULA, which is given by the Euler-Maruyama discretisation of \eqref{eq: main SDE}, or specifically
\begin{equation}\label{eq: ULA}
x_{n+1}=x_n-\gamma \nabla U(x_n)+\sqrt{\frac{2\gamma}{\beta}}z_{n+1}, \;\;\; x_0=\xi, \;\;\; n\in \mathbb{N},
\end{equation}
where $\gamma>0$ is the stepize and $(z_n)_{n\geq 1}$ is a sequence of iid standard Gaussians on $\mathbb{R}^d$. This algorithm was first proposed in a physical context in \cite{PARISI1981378} and \cite{10.1063/1.430300}, and in the context of image recognition in \cite{400cc48a-c98e-3a92-ac96-5477dc6f3a71}. It has been shown to be effective for sampling from Bayesian posteriors, see \cite{NIPS1992_f29c21d4, 0b5e028f-1a33-3fed-ae08-229fd5443c3f, articleula}, and utilised as part of a marginal maximal likelihood algorithm in \cite{DeBortoli2021}. Theoretical properties of the algorithm have been studied under a Lipschitz assumption on $\nabla U$ and suitable ergodicity properties in \cite{0b5e028f-1a33-3fed-ae08-229fd5443c3f, strathprints171}, and especially thoroughly under the assumption of strong convexity in \cite{articleula, Dalalyan2017FurtherAS}. A popular variant of the ULA algorithm where the gradient is replaced by an estimate is known as Stochastic Gradient Langevin Dynamics, see \cite{Welling2011BayesianLV, DALALYAN20195278}, and has been analysed under very general conditions in \cite{pmlr-v65-raginsky17a, 3b99fee3596546ae809bc62b56a361be, Zhang2023}. Furthermore these methods have been extended to constrained problems via the so called `projected stochastic gradient Langevin algorithm', given in \cite{osti_10354811}.

However, even though the ULA algorithm has been adapted to when the gradient is continuous but not globally Lipschitz in \cite{BROSSE20193638}, we are not aware of any literature on the convergence properties of \eqref{eq: ULA} when $\nabla U$ is not even continuous. The closest thing we are aware of is the strategy of \cite{doi:10.1137/16M1108340, Luu2021, Pereyra2016}, where a discontinuous gradient is smoothed via the computationally-intensive Moreau-Yosida regularisation. Additionally, the case of SGLD with discontinuous stochastic gradient was considered in \cite{lim2023langevin, f73ade6c0bbe4134b7a98b3db5029e80, RePEc:arx:papers:2007.01672}, however with the assumption of a `continuity in average' condition which excludes truly discontinuous densities.

On the other hand, in the context of numerical methods for SDEs (on bounded time intervals), there has been a flurry of research in recent years addressing this issue, see \cite{szolgyenyi2021stochastic} for a survey. Two main lines of research have appeared: Strategy I, for which $L^p$ convergence rate of $1/2$ has been proven for SDEs with piecewise-Lipschitz drift and multiplicative noise in \cite{articletl}, and Strategy II, for which an $L^2$ convergence rate of $1/2-$ has been proven for SDEs with additive noise and drift that is merely bounded and measurable in \cite{articleboundedmeas}. The technique of Strategy I involves a judicious choice of `transformation function' which smooths the discontinuity of the drift, in addition to bounds on the intervals in which the scheme crosses a point of discontinuity. It has been expanded to the case where the drift is superlinear in \cite{müllergronbach2022existence}, to the multidimensional case (where the discontinuities lie on manifolds) in \cite{multidimescheme}, and to a convergence rate $3/4$ when the drift is piecewise smooth in \cite{10.1093/imanum/draa078}. The technique of Strategy II involve involve the regularising properties of the noise, and been extended to the case of Levy processes in \cite{butkovsky2022strong}, multiplicative noise in \cite{10.1214/22-AAP1867} and SPDEs in \cite{doi:10.1137/21M1454213}. These generalisations in particular make use the stochastic sewing lemma established in \cite{10.1214/20-EJP442}. We note that whilst Strategy II is more general, it has not yet been adapted to the case where the coefficients of the SDE are unbounded. Lower bounds which demonstrate the optimality of many of results from Strategy I and II results have been established in \cite{nonliplowerbound, 10.1214/22-AAP1837}.


Therefore, in the present work we unite these two lines of research and prove in Theorem \ref{th: main theorem} that the ULA algorithm \eqref{eq: ULA} converges with $L^p$ discretisation error of order $1/2$ (uniformly in time). We assume that $U$ is strongly convex, and Lipschitz outside of a collection of sufficiently smooth compact hypersurfaces. Whilst we use techniques inspired in part by the strategy of \cite{articletl}, due to monotonicity of $\nabla U$ we shall not need to use a transformation function, which means the collection of hypersurfaces considered can intersect in an arbitrary manner. The main technical challenge then is to extend the work discussed in the previous paragraph to an unbounded time interval. To do this we shall multiply the difference process between the scheme and the true solution by an appropriate exponential and apply Proposition \ref{prop: exponential crossing}, which demonstrates that the scheme does not cross the hypersurfaces of discontinuity too often (weighted by an exponential function of time). A key ingredient in the proof of Proposition \ref{prop: exponential crossing} is the occupation time formula Lemma \ref{lem: time dep local time}, which is a corollary of the classical local time identity.

One disadvantage of Theorem \ref{th: main theorem} is that it requires \Cref{assmp: piecewise Lip}, which states that the discontinuities lie on a compact manifold. This therefore rules out many relevant examples where the discontinuity lies on an unbounded manifold, say a hyperplane. This is the case for instance for the examples considered in \cite{doi:10.1137/16M1108340}. However, using a much simpler argument than in Theorem \ref{th: main theorem}, we show in Theorem \ref{th: main theorem 2} that no matter how irregular $\nabla U$ is, so long as it obeys a linear growth bound the numerical error of the Euler scheme in $L^p$ is of order $1/4$ uniformly in time. This therefore demonstrates that the Euler scheme is broadly `robust' in the case where $\nabla U$ is convex, in the sense that the discretisation error does indeed converge to $0$ at polynomial rate as the stepsize tends to $0$. This is in contrast to the general case for the Euler scheme, where even for bounded coefficients the scheme can converge to the true solution arbitrarily slowly, see \cite{Hairer2012LossOR, doi:10.1080/07362994.2016.1263157}.

\subsection{Assumptions}\label{subsec: assmp}

We assume strong convexity of $U$ (which implies strong monotonicity of $\nabla U$) in \Cref{assmp: strong mon} below, as well as the integrability of the initial condition $\xi$. For the regularity of $\nabla U$ we have two different assumptions. For Theorem \ref{th: main theorem} we have \Cref{assmp: piecewise Lip}, in which we assume there are open sets $\Phi_j \subset \mathbb{R}^d$ (the union of whose completion is $\mathbb{R}^d$) on which $\nabla U$ is (piecewise) Lipschitz continuous, and that the boundary of these open sets is a subset of a collection of sufficiently regular compact hypersurfaces. We do not require that $\nabla U$ be well defined on the boundary of the $\Phi_j$, and indeed $\nabla U$ will never be well defined at a point of discontinuity. In Proposition \ref{prop: well posedness} we prove that all processes of interest are well defined none the less. Note that unlike in \cite{multidimescheme}, one does not require that the union of the hypersurfaces $\Sigma_i$ be smooth overall.

As an alternative to \Cref{assmp: piecewise Lip} we have the much weaker assumption \Cref{assmp: growth assumption} for Theorem \ref{th: main theorem 2}, which places no requirement on the regularity of $\nabla U$ at all besides that it is well-defined almost everywhere and does not grow superlinearly. This weaker assumption comes at the cost of a weaker numerical error of order $1/4$.

Note that in \Cref{assmp: initial condition} we assume $\xi$ is such that $\nabla U(\xi)$ is almost surely well defined (so that in particular $\xi$ takes values in $\cup^{n_2}_{j=1}\Sigma_j$ with probability $0$). This is necessary since otherwise the first iteration of \eqref{eq: ULA} will not be well defined. This however causes us no problems beyond the initial condition, as we show in Proposition \ref{prop: well posedness}.

\begin{assumptionA}\label{assmp: piecewise Lip}
\textbf{(Piecewise Continuity)} There exist bounded regions $M_j \subset \mathbb{R}^d$ and compact, orientable, connected, $C^3$ hypersurfaces $\Sigma_j\subset \mathbb{R}^d$ such that $\Sigma_j=\partial M_j$ for $j=1,2,...,n_2$. Furthermore these hypersurfaces cut $\mathbb{R}^d$ into smaller regions, that is, there exist disjoint open sets $\Phi_i \subset \mathbb{R}^d$, $i=1,2,...,n_1$ such that $\cup_{i=1}^{n_1} \overline{\Phi}_i=\mathbb{R}^d$ and $\cup_{i=1}^m\partial \Phi_i \subset \cup_{j=1}^{n_2} \Sigma_j$. Then $\nabla U$ is piecewise-Lipschitz on the $\Phi_i$. Specifically, there exists $L>0$ such that
\begin{equation}
\lvert \nabla U(x)-\nabla U(y)\rvert \leq L\lvert x-y\rvert, \;\;\;x,y \in \Phi_i,\;\;\; i=1,2,...,n_1.
\end{equation}
\end{assumptionA}

\begin{assumptionB}
\label{assmp: growth assumption}
\textbf{(Growth Assumption)} The function $\nabla U: \mathbb{R}^d\to\mathbb{R}^d$ exists almost everywhere, and there exists $L, m>0$ such that
\begin{equation}
\lvert \nabla U (x)\rvert \leq m+L\lvert x \rvert.
\end{equation}
\end{assumptionB}

\begin{assumptionA}\label{assmp: strong mon}
\textbf{(Strong Monotonicity)} There exists $\mu>0$ such that
\begin{equation}\label{eq: strong mon condition}
\langle \nabla U(x)-\nabla U(y), x-y\rangle \geq \mu\lvert x-y\rvert^2, \;\;\; x,y \in \mathbb{R}^d.
\end{equation}
\end{assumptionA}

\begin{assumptionA}\label{assmp: initial condition}
\textbf{(Integrable Initial Condition)} One has $P(\xi \in \{x \in \mathbb{R}^d \vert \; \nabla U(x) \; \text{well defined}\})=1$. Additionally, for every $p>0$ one has
\begin{equation}
E\lvert \xi \rvert^p<\infty
\end{equation}
\end{assumptionA}

Since each of the $\Sigma_j$ are compact, there exists an $R>0$ large enough that $\cup ^{n_2}_{i=1} \Sigma^\delta_i \subset B_R$. Therefore only one of the $\Phi_i$ is unbounded, and we can assume without loss of generality that it is $\Phi_1$ is unbounded.

\begin{figure}[H]\centering
\includegraphics[width=6.3cm, height=4.4cm]{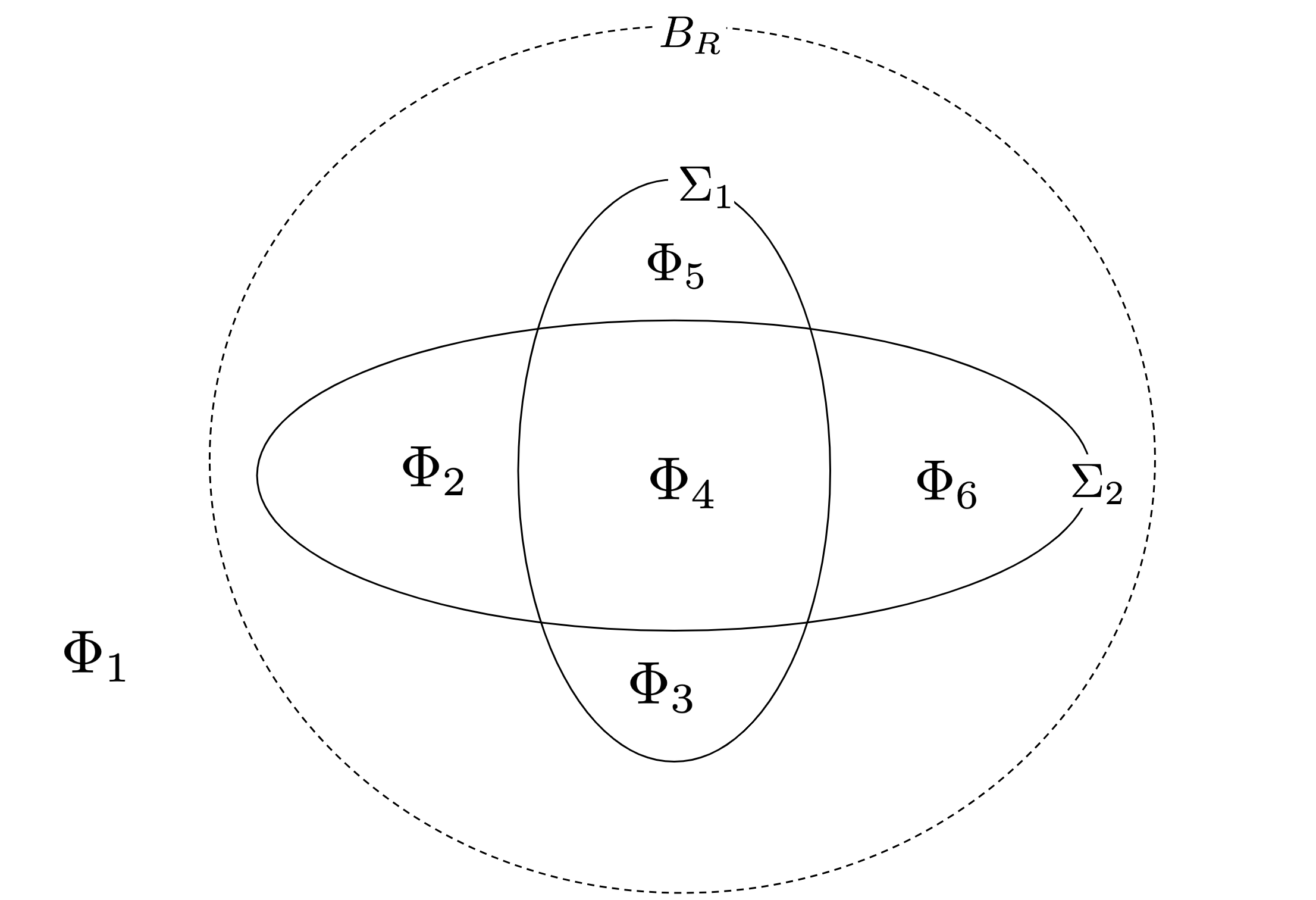}
\caption{Under \Cref{assmp: piecewise Lip} we have that the hypersurfaces $\Sigma_j$ cut $\mathbb{R}^d$ into regions $\Phi_i$ on which $\nabla U$ is continuous. Additionally, $R>0$ is large enough that $\cup ^{n_2}_{i=1} \Sigma^\delta_i \subset B_R$, and of the $\Phi_i$ only $\Phi_1$ is unbounded. Note that in this diagram $M_j$, $j=1,2$ are the interiors of the two hypersurfaces, so that $M_1 = \Phi_3 \cup \Phi_4 \cup \Phi_5$ and $M_2=\Phi_2\cup \Phi_4\cup\Phi_6$.}\label{visina7}
\end{figure}

\subsection{Well-Posedness and Set-Up}\label{subsec: setup}
In this section we show that the problem is well-posed despite the fact $\nabla U$ is only defined almost everywhere (and in particular, is not in general defined on the $\Sigma_i$). Firstly let us first define the continuous interpolation of \eqref{eq: ULA}. Let $\kappa_\gamma(t):=\gamma\lfloor \frac{t}{\gamma} \rfloor$ be the backwards projection onto the grid $\{0,\gamma, 2\gamma,...\}$, so that $\kappa_\gamma(n\gamma+\epsilon)=n\gamma$ for $n \in \mathbb{N}$ and $\epsilon \in [0,\gamma)$. Furthermore let $\underline{\kappa}_\gamma(t):=\kappa_\gamma(t)+\gamma$ be the forward projection onto $\{0,\gamma, 2\gamma,...\}$. Then one may define
\begin{equation}\label{eq: cont interp}
X_t = \xi-\int^t_0 \nabla U(X_{\kappa_\gamma(s)}) ds+\sqrt{\frac{2}{\beta}}W_t.
\end{equation}
Setting $z_n:=\frac{W_{n\gamma}-W_{(n-1)\gamma}}{\sqrt{\gamma}}$ one then sees by this definition that $X_{n\gamma}=x_n$ for all $n\in \mathbb{N}$. From now on it will mostly be more convenient to analyse \eqref{eq: cont interp}.

\begin{prop}\label{prop: well posedness}
    Let \Cref{assmp: growth assumption} and \Cref{assmp: initial condition} hold. Then for every non-random initial condition $\xi=x\in \mathbb{R}^d\setminus \cup^{n_2}_{j=1}\Sigma_j$ the SDEs \eqref{eq: main SDE} and \eqref{eq: cont interp} have well-defined unique strong solutions.
\end{prop}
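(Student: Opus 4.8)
The plan is to handle the two equations separately, starting with the continuous interpolation \eqref{eq: cont interp}, which is essentially explicit, and then passing to the diffusion \eqref{eq: main SDE}, for which one needs the classical theory of SDEs with merely measurable drift.

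For \eqref{eq: cont interp} I would argue inductively over the grid cells $[n\gamma,(n+1)\gamma]$. On $[0,\gamma]$ the equation reduces to $X_t = x - t\,\nabla U(x) + \sqrt{2/\beta}\,W_t$, which is well defined, adapted and (trivially) unique, since the hypothesis on $x$ ensures $\nabla U(x)$ is well defined. Assume $X$ has been constructed uniquely on $[0,n\gamma]$. The key observation is that $X_{n\gamma} = X_{(n-1)\gamma} - \gamma\,\nabla U(X_{(n-1)\gamma}) + \sqrt{2\gamma/\beta}\,z_n$, where $z_n := (W_{n\gamma}-W_{(n-1)\gamma})/\sqrt\gamma$ is a standard Gaussian independent of $\mathcal F_{(n-1)\gamma}$; hence, conditionally on $\mathcal F_{(n-1)\gamma}$, the law of $X_{n\gamma}$ is a non-degenerate Gaussian, so $X_{n\gamma}$ has a density on $\mathbb R^d$. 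Since $\nabla U$ exists outside a Lebesgue-null set by \Cref{assmp: growth assumption}, it follows that $\nabla U(X_{n\gamma})$ is almost surely well defined, and $X$ extends to $[n\gamma,(n+1)\gamma]$ via $X_t = X_{n\gamma} - (t-n\gamma)\nabla U(X_{n\gamma}) + \sqrt{2/\beta}(W_t-W_{n\gamma})$, uniqueness on this cell being immediate. Iterating builds a unique strong solution on $[0,\infty)$, and there is no explosion since every increment is almost surely finite.

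For the diffusion \eqref{eq: main SDE} I would localise. For $R>0$ set $b_R := -(\nabla U)\mathbf 1_{B_R}$, a bounded measurable field agreeing with $-\nabla U$ on the ball $B_R$. Because the noise is additive and non-degenerate (constant matrix $\sqrt{2/\beta}\,I_d$), the classical strong existence and pathwise uniqueness theory for SDEs with bounded measurable drift and non-degenerate additive noise (Zvonkin in dimension one, Veretennikov in general) provides a pathwise-unique strong solution $Y^R$ of $dY^R_t = b_R(Y^R_t)\,dt + \sqrt{2/\beta}\,dW_t$, $Y^R_0 = x$, on $[0,\infty)$. Setting $\tau_R := \inf\{t\ge 0 : |Y^R_t|\ge R\}$ and noting $b_{R'}=b_R$ on $B_R$ for $R'>R$, pathwise uniqueness for the stopped equations forces $Y^{R'}_{t\wedge\tau_R}=Y^R_{t\wedge\tau_R}$ for all $t$ and $\tau_R\le\tau_{R'}$; the $\tau_R$ therefore increase to some $\tau_\infty$, the $Y^R$ patch together to a process $Y$ on $[0,\tau_\infty)$ solving \eqref{eq: main SDE}, and $Y$ is unique there by the same localisation (any two strong solutions coincide up to each exit time of $B_R$).

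It remains to show $\tau_\infty=\infty$ almost surely, which is where the linear growth of \Cref{assmp: growth assumption} enters. Each $Y^R$ is a continuous semimartingale (its drift part is Lipschitz in time), so It\^o's formula may be applied to $|Y^R_{t\wedge\tau_R}|^2$ with no regularity assumption on the drift; using $2\langle y,-\nabla U(y)\rangle \le 2|y|(m+L|y|) \le C(1+|y|^2)$ for $y\in B_R$ and the fact that $Y^R_s\in B_R$ for $s<\tau_R$, taking expectations and applying Gr\"onwall gives $E|Y^R_{t\wedge\tau_R}|^2 \le g(t)$ with $g$ independent of $R$. Since $|Y^R_{t\wedge\tau_R}|=R$ on $\{\tau_R\le t\}$, this yields $R^2 P(\tau_R\le t)\le g(t)$, hence $P(\tau_R\le t)\to 0$ and $P(\tau_\infty\le t)=0$ for every $t$; thus $Y$ is a strong solution on all of $[0,\infty)$. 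I expect the work here to be bookkeeping rather than conceptual: pushing the black-box bounded-drift results cleanly through the localisation and non-explosion argument, and checking that the two places where the irregularity of $\nabla U$ could bite — the application of It\^o's formula to $Y^R$ and the well-definedness of $\nabla U(X_{n\gamma})$ — are harmless, the former because It\^o's formula needs no regularity of the drift of a continuous semimartingale, the latter because the relevant random variable has a density.
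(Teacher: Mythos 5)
Your proof is correct and for the Euler interpolation \eqref{eq: cont interp} it is essentially identical to the paper's: both arguments observe that the step-$n$ increment is a non-degenerate Gaussian conditionally on $\mathcal{F}_{(n-1)\gamma}$, so $X_{n\gamma}$ has a Lebesgue density and $\nabla U(X_{n\gamma})$ is a.s.\ well defined, with the base case supplied by \Cref{assmp: initial condition}. The only difference is in the treatment of the diffusion \eqref{eq: main SDE}: the paper simply invokes Theorem 2.1 of Zhang \cite{ZHANG20051805}, which directly covers SDEs with additive non-degenerate noise and locally integrable drift of linear growth, whereas you re-derive the result from the older bounded-drift Zvonkin--Veretennikov theory via truncation, consistency of the localised solutions, and a non-explosion estimate from It\^o plus Gr\"onwall. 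Your route is more self-contained and demonstrates explicitly where the linear-growth bound in \Cref{assmp: growth assumption} enters (ruling out explosion), at the cost of reproving what the cited theorem already packages; the paper's choice is terser but leaves the reader to check that the hypotheses of Zhang's theorem are met. Both are valid; neither represents a conceptually different strategy.
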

\begin{proof}
    The unique strong solution of \eqref{eq: main SDE} follows from Theorem 2.1 in \cite{ZHANG20051805}. For \eqref{eq: cont interp} to be well defined, by the structure of the Euler scheme we just have to show that $P(X_{n\gamma} \in \{x \in \mathbb{R}^d \vert \; \nabla U(x) \; \text{well defined}\})=1$ for every $n\in \mathbb{N}$ and initial condition $x\in  \mathbb{R}^d\setminus \cup^{n_2}_{j=1}\Sigma_j$. Note that $\mathcal{L}(X_{n\gamma})=\mathcal{L}(x_n)$, so it sufficient to check $P(x_n\in \cup_{i=1}^{n_1}\Sigma_i)=0$ for every $n\geq 0$. Since $x_0=\xi$ this holds by \Cref{assmp: initial condition} for $n=0$. Then it holds for $n\geq 1$ since the law of $x_n$ conditioned on $x_{n-1} \in \mathbb{R}^d\setminus \cup^{n_2}_{j=1}\Sigma_j$ is absolutely continuous with respect to the Lesbesgue measure for every $n\geq 1$.
\end{proof}
Since \eqref{eq: main SDE} and \eqref{eq: cont interp} have strong solutions for every non-random initial condition $\xi=x\in \mathbb{R}^d \setminus \cup^{n_2}_{j=1}\Sigma_j$, one may set up the probability space for $(Y_t)_{t\geq 0}$ and $(X_t)_{t\geq 0}$ as follows. Let $(\Omega_1, \mathcal{H}^1, P_1 )$ be a probability space on which $\xi$ is defined and let $(\Omega_2, (\mathcal{H}^2_t)_{t\geq 0}, P_2)$ be a filtered probability space on which $(W_t)_{t\geq 0}$ is defined and adapted to. Then one can define the probability space $(\Omega,(\mathcal{F}_t)_{t\geq 0}, P )$ as the product of the two probability spaces, so that $\Omega=\Omega_1\times \Omega_2$, $dP(\omega)=dP(\omega_1,\omega_2)=dP_1(\omega_1)dP_2(\omega_2)$ and $\mathcal{F}_t= \mathcal{H}^2_t\times \mathcal{H}^1$. Then for $x \in \mathbb{R}^d\setminus \cup^{n_2}_{j=1}\Sigma_j$, the map $\omega=(\omega_1, \omega_2)\mapsto X^x_t(\omega_2)$ can be defined as the strong solution to
\begin{equation}\label{eq: cont interp x}
X^x_t = x-\int^t_0 \nabla U(X^x_{\kappa_\gamma(s)}) ds+\sqrt{\frac{2}{\beta}}W_t,
\end{equation}
and under \Cref{assmp: initial condition}, one has $X_t(\omega)=X_t(\omega_1, \omega_2):=X^{\xi(\omega_1)}_t(\omega_2)$. Then for all measurable functionals $f$ from measurable functions into $\mathbb{R}$, one may define 
\begin{equation}
E^x f((X_t)_{t\geq 0}):=E f((X^x_t)_{t\geq 0}) ,
\end{equation}
and additionally for all Borel-measurable subsets $A$ of functions $\mathbb{R}^d\to\mathbb{R}$, one may define $P^x((X_t)_{t\geq 0} \in A):=P((X^x_t)_{t\geq 0} \in A)$. One can similarly define the process $(Y^x_t)_{t\geq 0}$ and extend these definitions to \eqref{eq: main SDE}. Therefore $E^x$ corresponds to the conditional definition $E^x[\cdot]=E [\cdot  \vert \; \xi=x]$. Furthermore by Fubini's Theorem, given the integrand is uniformly integrable and $P(\xi \in \mathbb{R}^d \setminus \cup^{n_2}_{j=1}\Sigma_j)=0$ one has
\begin{equation}\label{eq: initial conditioin Fubini}
    E[\cdot] = \int_{\mathbb{R}^d\setminus \cup^{n_2}_{j=1}\Sigma_j}E^x[\cdot]d\mu_\xi(x),
\end{equation}
where $\mu_\xi$ is the law of $\xi$. It follows that if, for any random variable $z$, we may define the random operator $E^z[\cdot]$ by 
\begin{equation}\label{eq: E^z definition}
\omega\mapsto E^{z(\omega)}[\cdot]. 
\end{equation}
Then one has
\begin{lemma}\label{lemma: Markov property}
Consider the definition \eqref{eq: E^z definition}. Then for every $n\in \mathbb{N}$, $t\geq 0$ and measurable function $f:\mathbb{R}^d\to\mathbb{R}$ one has
\begin{equation}
    E^{X^x_{n\gamma}}[f(X_t)]=E^x[f(X_{t+n\gamma}) \vert \mathcal{F}_{n\gamma}],\;\;\;E^{X_{n\gamma}}[f(X_t)]=E[f(X_{t+n\gamma}) \vert \mathcal{F}_{n\gamma}].
\end{equation}
\end{lemma}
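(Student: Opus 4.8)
The plan is to prove the classical Markov property of the Euler scheme at the grid points $n\gamma$ and then rephrase it in the conditional notation $E^{z}$. Fix $n\in\mathbb N$ and put $\tilde W_s:=W_{s+n\gamma}-W_{n\gamma}$, which is a Brownian motion. The first step is the shift identity: since $\kappa_\gamma(s+n\gamma)=\kappa_\gamma(s)+n\gamma$ for all $s\ge 0$, subtracting \eqref{eq: cont interp} evaluated at $n\gamma$ from \eqref{eq: cont interp} evaluated at $t+n\gamma$ and substituting $s=u+n\gamma$ gives, with $\tilde X_u:=X_{u+n\gamma}$,
\[
\tilde X_t \;=\; X_{n\gamma}-\int_0^t \nabla U\big(\tilde X_{\kappa_\gamma(u)}\big)\,du+\sqrt{\tfrac{2}{\beta}}\,\tilde W_t ,
\]
i.e.\ $\tilde X$ solves \eqref{eq: cont interp} with initial datum $X_{n\gamma}$ and driving noise $\tilde W$; the same computation done with $X$ replaced by $X^x$ (and $X_{n\gamma}$ by $X^x_{n\gamma}$) holds under $E^x$. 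The second step records that, by the explicit recursion \eqref{eq: ULA} together with the piecewise-linear-plus-Brownian interpolation, $X^x_t$ is a deterministic Borel function of the starting point and the noise path: there is a measurable $F_t:\mathbb R^d\times C([0,t];\mathbb R^d)\to\mathbb R^d$ with $X^x_t=F_t\big(x,(W_s)_{s\le t}\big)$ for every $x\in\mathbb R^d\setminus\cup^{n_2}_{j=1}\Sigma_j$ (well defined a.s.\ by Proposition \ref{prop: well posedness}). Combining with the shift identity, $X_{t+n\gamma}=F_t\big(X_{n\gamma},(\tilde W_s)_{s\le t}\big)$ and $X^x_{t+n\gamma}=F_t\big(X^x_{n\gamma},(\tilde W_s)_{s\le t}\big)$.

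Now I would apply the freezing lemma for conditional expectations. In the product space $(\Omega,(\mathcal F_t)_t,P)$ with $\mathcal F_t=\mathcal H^2_t\times\mathcal H^1$, the process $\tilde W$ involves only $\Omega_2$ and is independent of $\mathcal H^2_{n\gamma}$, hence independent of $\mathcal F_{n\gamma}$; on the other hand $X_{n\gamma}=F_{n\gamma}(\xi,(W_s)_{s\le n\gamma})$ and $X^x_{n\gamma}=F_{n\gamma}(x,(W_s)_{s\le n\gamma})$ are $\mathcal F_{n\gamma}$-measurable and, by Proposition \ref{prop: well posedness}, a.s.\ avoid $\cup^{n_2}_{j=1}\Sigma_j$, so $E^{X_{n\gamma}}$ and $E^{X^x_{n\gamma}}$ are meaningful via \eqref{eq: E^z definition}. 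For bounded measurable $f$ set $g(y,w):=f(F_t(y,w))$ and $h(y):=E[g(y,\tilde W)]$. The freezing lemma (independence of $\tilde W$ from $\mathcal F_{n\gamma}$, measurability of $X_{n\gamma}$) gives $E\big[f(X_{t+n\gamma})\,\big|\,\mathcal F_{n\gamma}\big]=h(X_{n\gamma})$, and since $\tilde W\overset{d}{=}W$ we have $h(y)=E\big[f\big(F_t(y,(W_s)_{s\le t})\big)\big]=E[f(X^y_t)]=E^y[f(X_t)]$ by the definition of $E^y$; this is the second claimed identity. Repeating the computation verbatim with $X$ replaced by $X^x$ — the noise $\tilde W$ and its independence from $\mathcal F_{n\gamma}$ are unchanged — yields $E^x\big[f(X_{t+n\gamma})\,\big|\,\mathcal F_{n\gamma}\big]=h(X^x_{n\gamma})=E^{X^x_{n\gamma}}[f(X_t)]$, the first identity. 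The passage from bounded measurable $f$ to the $f$ used elsewhere in the paper is then a routine truncation/monotone-class argument using the moment bounds available for the scheme.

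The argument is essentially the textbook Markov property of an Euler discretisation, so I do not expect a substantial obstacle; the only points requiring care are bookkeeping ones. First, one must check carefully that $\tilde W$ is independent of $\mathcal F_{n\gamma}$, which is precisely where the product structure $\mathcal F_t=\mathcal H^2_t\times\mathcal H^1$ and the independence of $\xi$ and $W$ enter. Second, one must make sure the conditioned objects $E^{X_{n\gamma}}$ and $E^{X^x_{n\gamma}}$ are well posed, i.e.\ that $X_{n\gamma}$ and $X^x_{n\gamma}$ lie in $\mathbb R^d\setminus\cup^{n_2}_{j=1}\Sigma_j$ almost surely — which is exactly the content of Proposition \ref{prop: well posedness}. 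Everything else (the shift identity, the explicit measurable solution map, the freezing lemma) is standard.
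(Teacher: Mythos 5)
Your proof is correct and follows essentially the same route as the paper: both express the Euler scheme as a measurable map of the initial point and the Brownian increments, use independence of the post-$n\gamma$ noise from $\mathcal{F}_{n\gamma}$ together with the shift/semigroup property of the discrete flow, and close with a freezing-lemma conditional-expectation computation. The differences (you prove the $E$-identity first and derive the $E^x$-one verbatim, you make the distributional equality $\tilde W\overset{d}{=}W$ and the truncation for unbounded $f$ explicit, whereas the paper writes the flow in terms of finitely many increments $u^x_t(\cdot)$ and obtains the second identity by integrating $x$ against the law of $\xi$) are cosmetic bookkeeping rather than a different argument.
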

\begin{proof}
    Due to the discrete Euler Scheme structure of $X_t$, for every $x\in \mathbb{R}^d$ and $t\geq 0$ there exists $m \in \mathbb{N}$ and a function $u^x_t:(\mathbb{R}^d)^m\to\mathbb{R}^d$ such that 
    \begin{equation}
    u^x_t(W_\gamma, W_{2\gamma}-W_{\gamma},...,W_{\kappa_\gamma(t)}-W_{\kappa_\gamma(t)-\gamma},W_t-W_{\kappa_\gamma(t)})=X^x_t,
    \end{equation}
    almost surely (where the last argument is obviously supressed if $t=\kappa_\gamma(t)$). Therefore
    \begin{align}
        E^x[f(X_{t+n\gamma}) \vert \mathcal{F}_{n\gamma}]&= E[f(u^x_{t+n\gamma}(W_\gamma, W_{2\gamma}-W_{\gamma},...,W_{t+n\gamma}-W_{\kappa_\gamma(t)+n\gamma})) \vert \mathcal{F}_{n\gamma}]\nonumber \\
        &=\int_{\Omega} f(u^x_{t+n\gamma}(W_\gamma, W_{2\gamma}-W_{\gamma},...,W_{n\gamma}-W_{(n-1)\gamma},z))d\mu(z),
    \end{align}
    where we have compressed all arguments of $u^x_{t+n\gamma}$ after the $n$th into $z$, and denoted the respective law $\mu$. The fact that we can take expectation of over all random inputs independent of $\mathcal{F}_{n\gamma}$ follows in the same way as Example 4.1.7 from \cite{DU04}. Furthermore observe that by the Markov property of $X_0, X_\gamma, X_{2\gamma}$ one has
    \begin{equation}
       u^x_{t+n\gamma}(w_1,..., w_{n+m})=u_t^{u^x_{n\gamma}(w_1,...,w_n)}(w_{n+1},...,w_{n+m}),
    \end{equation}
    so that
     \begin{equation}
        E^x[f(X_{t+n\gamma}) \vert \mathcal{F}_{n\gamma}](\omega)= \int_{\Omega} f(u^{X^x_{n\gamma}(\omega)}_t(z))d\mu(z)= E^{X^x_{n\gamma}(\omega)}[f(X_t)].
    \end{equation}
    The first result then follows. For the second, one integrates $x$ with respect to the law of $\xi$, the initial condition of $X_t$.
\end{proof}


\subsection{Delta-Neighbourhood}\label{subsec: delta nbhd}
Essential for the following arguments shall be the constant $\delta>0$ given below, which provides a good neighbourhood of the $\Sigma_i$ on which to perform our analysis. Let us first define for $j=1,2,...,n_2$ the signed distance function
\begin{equation}\label{eq: rho i def}
\rho_j(x)= \begin{cases}
dist(x, \Sigma_j),\;\;\;\;\;x \in \mathbb{R}^d\setminus M_j \\
0,\;\;\;\;\;\;\;\; x \in \Sigma_j \\
-dist(x, \Sigma_i),\;\;\; x \in M_j,
\end{cases}
\end{equation}
and, for $\epsilon>0$, the epsilon neighbourhood
\begin{equation}\label{eq: eps neighbourhood def}
\Sigma^{\epsilon}:= \{ x \in \mathbb{R}^d \; \vert \; dist(x, \Sigma) < \epsilon \}.
\end{equation}
\begin{prop}\label{prop: good delta}
There exists a constant $\delta>0$ such that for every $j=1,2,...,n_2$
\begin{enumerate}[label=(\roman*)]
\item one has that $\rho_j\in C^2(\Sigma_j^{3\delta})$
\item for every $x\in \Sigma_j^{3\delta}$, one has $\lvert \nabla \rho_j(x)\rvert =1$
\end{enumerate}
\end{prop}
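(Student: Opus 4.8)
The plan is to treat each hypersurface $\Sigma_j$ separately and then exploit that there are only finitely many of them. Fix $j\in\{1,\dots,n_2\}$. Since $\Sigma_j=\partial M_j$ is a compact, orientable, connected $C^3$ hypersurface, it is a classical fact about signed distance functions (see e.g. Lemma 14.16 in Gilbarg and Trudinger, or standard tubular-neighbourhood results in differential geometry) that $\rho_j$ is $C^3$, and hence in particular $C^2$, on some open neighbourhood $\Sigma_j^{\epsilon_j}$ of $\Sigma_j$ for a suitable $\epsilon_j>0$. The mechanism behind this is that compactness of $\Sigma_j$ forces the \emph{reach} of $\Sigma_j$ to be bounded below by a positive constant, so that for $x$ sufficiently close to $\Sigma_j$ the nearest-point projection $\pi_j(x)\in\Sigma_j$ is single-valued; applying the implicit function theorem to the normal exponential map $(p,s)\mapsto p+s\,\nu_j(p)$ (where $\nu_j$ is the outward unit normal field of $M_j$, which exists globally because $\Sigma_j$ is orientable) shows this map is a $C^2$-diffeomorphism from $\Sigma_j\times(-\epsilon_j,\epsilon_j)$ onto $\Sigma_j^{\epsilon_j}$, and $\rho_j$ is precisely the second coordinate of its inverse. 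This gives property (i) on $\Sigma_j^{\epsilon_j}$.

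For property (ii), note that on $\Sigma_j^{\epsilon_j}$ one has the decomposition $x=\pi_j(x)+\rho_j(x)\,\nu_j(\pi_j(x))$ with $\nu_j\circ\pi_j$ of unit length, and a direct computation (or the classical fact that $\nabla\rho_j(x)=\nu_j(\pi_j(x))$ wherever the nearest point is unique) yields $\lvert\nabla\rho_j(x)\rvert=1$ for all $x\in\Sigma_j^{\epsilon_j}$. Equivalently, one may invoke that the (unsigned) distance to $\Sigma_j$ is $1$-Lipschitz and satisfies the eikonal equation $\lvert\nabla\rho_j\rvert=1$ Lebesgue-almost everywhere; since $\rho_j$ is $C^1$ on $\Sigma_j^{\epsilon_j}$ by part (i), the identity then holds at \emph{every} point of $\Sigma_j^{\epsilon_j}$ by continuity.

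It remains only to make the neighbourhood uniform in $j$. Since $n_2<\infty$, set $\delta:=\tfrac13\min_{1\le j\le n_2}\epsilon_j>0$. Then $\Sigma_j^{3\delta}\subseteq\Sigma_j^{\epsilon_j}$ for every $j$, so both (i) and (ii) hold on $\Sigma_j^{3\delta}$, which is the claim. The only non-trivial ingredient is the existence of the tubular neighbourhood together with the $C^2$-regularity of the nearest-point projection; this is where compactness (to bound the reach from below) and the $C^3$ hypothesis (to have a derivative to spare in the implicit function theorem) are used, but it is entirely standard and we simply cite it rather than reproving it.
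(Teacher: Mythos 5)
Your proposal is correct and follows essentially the same route as the paper: both invoke the regularity of the signed distance function near a compact $C^3$ hypersurface (Gilbarg--Trudinger Lemma 14.16) for part (i), and both deduce part (ii) from the identity $\nabla\rho_j(x)=\nu_j(\pi_j(x))$ with $\nu_j$ a unit normal. The paper simply cites the lemma and its proof directly, while you additionally unpack the tubular-neighbourhood mechanism and offer an eikonal-equation alternative for (ii), but the substance is the same.
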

\begin{proof}
The first part follows from the statement of \cite{gilbarg2001elliptic}, Lemma 14.16, choosing $\delta=\mu/3$ for $\mu$ as in the Lemma. The second part follows from the last two lines of the proof, where it is shown that for every $x\in \Sigma_j^{3\delta}$ there exists $y\in \Sigma_j$ such that $\nabla \rho_j (x) =n_j(y)$, where $n_j$ is the unit normal to $\Sigma_j$. Therefore $\lvert \nabla \rho_j (x) \rvert =\lvert n_j(y)\rvert=1$.
\end{proof}
It follows that if we define $\phi \in C^\infty(\mathbb{R})$ to be a smooth cutoff function satisfying $\phi=1$ on $[-\delta, \delta]$ and $\phi=0$ outside of $(-2\delta, 2\delta)$, one has that $P_j:=\phi \circ\rho_j$ satisfies $P_j=\rho_j$ on $\Sigma_j^\delta$ and $P_j\in C^2(\mathbb{R}^d)$. This will be useful in order to apply Ito's formula in the proof of Lemma \ref{lemma: local time dif}.

\subsection{Main Theorems}
Now we present our main results: note that replacing the piecewise Lipschitz assumption \Cref{assmp: piecewise Lip} with the weaker growth assumption \Cref{assmp: growth assumption} leads to a weakening of the order of numerical error from $1/2$ to $1/4$.

\begin{thm}\label{th: main theorem}
Let \Cref{assmp: piecewise Lip}, \Cref{assmp: strong mon} and \Cref{assmp: initial condition} hold. Let $p\geq 1$ and $\beta>0$. Consider $\pi_\beta \sim e^{-\beta U}$, the invariant measure of the overdamped Langevin diffusion \eqref{eq: main SDE}. Let $\gamma_0 \in (0, \frac{\mu}{2L^2})$. Then there exists $c>0$ such that for every $\gamma \in (0,\gamma_0)$ the unadjusted Langevin algorithm (ULA) given in \eqref{eq: ULA} satisfies
\begin{equation}
W_p(\pi_\beta, \mathcal{L}(x_n)) \leq W_p(\xi, \pi_\beta)e^{-\mu \gamma n}+c\gamma^{1/2}.
\end{equation}
\end{thm}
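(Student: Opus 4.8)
The plan is to analyze the difference process $e_t := X_t - Y_t$ between the continuous interpolation \eqref{eq: cont interp} of ULA and the true diffusion \eqref{eq: main SDE} driven by the same Brownian motion, and to show it decays to a $\gamma^{1/2}$-sized ball uniformly in time; the Wasserstein bound then follows by a triangle inequality together with the exponential contraction of the diffusion towards $\pi_\beta$ (a standard consequence of \Cref{assmp: strong mon}), coupling the diffusion's initial condition optimally to $\xi$. Since the noise is additive, $e_t$ is absolutely continuous in $t$ with $de_t = -(\nabla U(X_{\kappa_\gamma(s)}) - \nabla U(Y_s))\,ds$. The key idea, announced in the introduction, is to study $g_t := e^{2\mu t}\lvert e_t \rvert^2$ (or more generally $e^{p\mu t}\lvert e_t\rvert^p$) rather than $\lvert e_t\rvert^2$ directly, so that the strong monotonicity \eqref{eq: strong mon condition} is used to cancel the exponential growth.

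First I would write, by the chain rule,
\begin{equation}
\frac{d}{dt} g_t = 2\mu e^{2\mu t}\lvert e_t\rvert^2 - 2 e^{2\mu t}\langle \nabla U(X_{\kappa_\gamma(t)}) - \nabla U(Y_t), e_t\rangle.
\end{equation}
I would then add and subtract $\nabla U(X_t)$ inside the inner product, splitting it as $\langle \nabla U(X_t) - \nabla U(Y_t), e_t\rangle + \langle \nabla U(X_{\kappa_\gamma(t)}) - \nabla U(X_t), e_t\rangle$. The first term is $\geq \mu\lvert e_t\rvert^2$ by \Cref{assmp: strong mon}, which exactly absorbs half of the $2\mu e^{2\mu t}\lvert e_t\rvert^2$ growth term and leaves a genuinely negative $-\mu e^{2\mu t}\lvert e_t\rvert^2$; this is what gives uniform-in-time control. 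The second term is the discretization error and is the crux of the argument: away from the hypersurfaces $\cup_j \Sigma_j$ one has the piecewise-Lipschitz bound \Cref{assmp: piecewise Lip}, so that when $X_t$ and $X_{\kappa_\gamma(t)}$ lie in a common region $\Phi_i$ one gets $\lvert \nabla U(X_{\kappa_\gamma(t)}) - \nabla U(X_t)\rvert \leq L\lvert X_t - X_{\kappa_\gamma(t)}\rvert$, and the one-step increment $\lvert X_t - X_{\kappa_\gamma(t)}\rvert$ is of order $\gamma^{1/2}$ in every $L^p$ (by the growth bound on $\nabla U$ together with moment bounds on the scheme, which follow from strong monotonicity). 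The dangerous event is when the segment from $X_{\kappa_\gamma(t)}$ to $X_t$ crosses a discontinuity hypersurface; on that event we only have the linear-growth bound \Cref{assmp: growth assumption} (implied on each $\Phi_i$ by \Cref{assmp: piecewise Lip}, and globally since $\nabla U$ is monotone and piecewise Lipschitz), so $\lvert \nabla U(X_{\kappa_\gamma(t)}) - \nabla U(X_t)\rvert$ is merely $O(1 + \lvert X_t\rvert + \lvert X_{\kappa_\gamma(t)}\rvert)$ — an $O(1)$ contribution rather than $O(\gamma^{1/2})$.

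The main obstacle, therefore, is controlling the total (exponentially weighted) contribution of these crossing events, and this is precisely where Proposition \ref{prop: exponential crossing} and the occupation-time Lemma \ref{lem: time dep local time} come in. The plan is: decompose $[0,n\gamma]$ according to whether $X_s$ lies inside the tube $\Sigma^\delta := \cup_j \Sigma_j^\delta$ or not; outside the tube, for $\gamma$ small enough the one-step increment cannot reach a hypersurface, so the good Lipschitz bound applies. Inside the tube, bound the bad term by something like $C(1+\lvert X_s\rvert + \lvert X_{\kappa_\gamma(s)}\rvert)\lvert e_s\rvert \mathbf{1}_{X_s \in \Sigma^\delta} \leq \tfrac{\mu}{2}\lvert e_s\rvert^2 + C'(1+\lvert X_s\rvert^2 + \lvert X_{\kappa_\gamma(s)}\rvert^2)\mathbf{1}_{X_s\in\Sigma^\delta}$ by Young's inequality, so that the $\tfrac{\mu}{2}\lvert e_s\rvert^2$ piece is again absorbed by the contraction term, and one is left needing
\begin{equation}
E\!\left[\int_0^{n\gamma} e^{2\mu s}\bigl(1+\lvert X_s\rvert^2 + \lvert X_{\kappa_\gamma(s)}\rvert^2\bigr)\mathbf{1}_{X_s \in \Sigma^\delta}\,ds\right] \leq C e^{2\mu n\gamma}\gamma^{1/2}.
\end{equation}
This bound — that the exponentially-weighted occupation time of the thin tube $\Sigma^\delta$ by the Euler scheme is $O(\gamma^{1/2})$ (after dividing out $e^{2\mu n\gamma}$), with the polynomial weight $1+\lvert X_s\rvert^2$ harmless thanks to uniform-in-time moment bounds on the scheme and Cauchy–Schwarz — is exactly the content of Proposition \ref{prop: exponential crossing}, which rests on the local-time/occupation-time formula of Lemma \ref{lem: time dep local time} applied to the signed distance functions $\rho_j$ (equivalently the $C^2$ functions $P_j = \phi\circ\rho_j$ of Proposition \ref{prop: good delta}); the $\gamma^{1/2}$ rate arises because the scheme, being $\gamma^{1/2}$-close to a diffusion with a density bounded near $\Sigma_j$, spends $O(\gamma^{1/2})$ time within distance $O(\gamma^{1/2})$ of each hypersurface. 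Granting that proposition, Grönwall's inequality applied to $E[g_t]$ (whose derivative is now $\leq -\mu e^{2\mu s}E\lvert e_s\rvert^2 + (\text{stuff already bounded by }Ce^{2\mu s}\gamma^{1/2})$, hence $E[g_{n\gamma}] \leq Ce^{2\mu n\gamma}\gamma^{1/2}$) yields $E\lvert e_{n\gamma}\rvert^2 = e^{-2\mu n\gamma}E[g_{n\gamma}] \leq C\gamma^{1/2}$, i.e. $\|X_{n\gamma}-Y_{n\gamma}\|_{L^2} \leq C\gamma^{1/4}$. To recover the stated order $\gamma^{1/2}$ (and general $p$) one repeats the computation with $e^{p\mu t}\lvert e_t\rvert^p$ in place of the $L^2$ version — there the single bad step contributes $\lvert e_s\rvert^{p-1}$ times an $O(1)$ factor on the small-probability event $\{X_s\in\Sigma^\delta\}$, and since $\|e_s\|_{L^\infty\text{-in-}t, L^q}$ is already known finite for all $q$, Hölder against $P(X_s\in\Sigma^\delta)^{1/q'} = O(\gamma^{1/2})^{1/q'}$ combined with the occupation-time bound upgrades the rate; a careful bookkeeping of these exponents is what delivers the clean $\gamma^{1/2}$ and is the one genuinely delicate piece of the otherwise routine Grönwall argument. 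Finally, combining $W_p(\mathcal L(x_n), \mathcal L(Y_{n\gamma})) \leq \|X_{n\gamma}-Y_{n\gamma}\|_{L^p} \leq c\gamma^{1/2}$ with $W_p(\mathcal L(Y_{n\gamma}), \pi_\beta) \leq e^{-\mu n\gamma}W_p(\mathcal L(\xi),\pi_\beta)$ gives the theorem.
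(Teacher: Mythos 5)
Your broad strategy — same-noise coupling, study $e_t = Y_t - X_t$, weight by an exponential so that strong monotonicity supplies a dissipative term, split the drift difference by adding and subtracting $\nabla U(X_t)$, and control the crossing events via the occupation-time machinery — is exactly the paper's. You have also correctly located the occupation-time bound (and its basis in the local-time formula) as the key external ingredient. However there is a genuine gap at the step where you absorb the crossing contribution, and your proposed patch does not close it.

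You apply Young's inequality \emph{pointwise inside the time integral}: on $A_s$ you bound the bad term by $\tfrac{\mu}{2}\lvert e_s\rvert^2 + C(1+\lvert X_s\rvert^2+\lvert X_{\kappa_\gamma(s)}\rvert^2)\mathbf{1}_{A_s}$ and then take an expectation of the resulting time integral of $\mathbf{1}_{A_s}$, i.e.\ you only ever use the \emph{first} moment $E\int_0^t e^{qs}\mathbf{1}_{A_s}\,ds = O(\gamma^{1/2}e^{qt})$. This yields $E\lvert e_t\rvert^2 \lesssim \gamma^{1/2}$, hence an $L^2$ rate of $\gamma^{1/4}$, which you acknowledge; doing the same with $\lvert e_t\rvert^p$ still only brings in $\int\mathbf 1_{A_s}\,ds$ to the first power and gives $E\lvert e_t\rvert^p\lesssim\gamma^{1/2}$, i.e.\ an $L^p$ rate of $\gamma^{1/(2p)}$, which is \emph{worse} as $p$ grows. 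Your proposed fix — H\"older against $P(A_s)^{1/q'}$ — cannot recover $\gamma^{1/2}$: after H\"older each time-slice contributes $O(\gamma^{1/(2q')})$ (even granting a pointwise bound $P(A_s)\lesssim\gamma^{1/2}$, which is not what the lemmas provide), and you would need $q'\to 1$, hence $q\to\infty$, which degenerates. The missing idea is \emph{pathwise sup-extraction before Young}: the paper bounds the crossing contribution as
\begin{equation*}
c\int_0^t \Bigl(e^{\frac{(p-1)\mu}{2}s}\lvert e_s\rvert^{p-1}\Bigr)\Bigl(e^{\frac{\mu}{2}s}\mathbf{1}_{A_s}\Bigr)\,ds \le c\Bigl(\sup_{u\in[0,t]} e^{\frac{(p-1)\mu}{2}u}\lvert e_u\rvert^{p-1}\Bigr)\int_0^t e^{\frac{\mu}{2}s}\mathbf{1}_{A_s}\,ds \le \tfrac12\sup_{u\le t}e^{\frac{p\mu}{2}u}\lvert e_u\rvert^p + c\Bigl(\int_0^t e^{\frac{\mu}{2}s}\mathbf{1}_{A_s}\,ds\Bigr)^p,
\end{equation*}
taking the supremum of the whole inequality over $[0,t]$, and moving the $\tfrac12\sup$ to the left. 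This is precisely why Proposition~\ref{prop: exponential crossing} is stated with the $p$th moment $E[(\int e^{qs}\mathbf 1_{A_s}\,ds)^p]\lesssim\gamma^{p/2}e^{pqt}$ — the first moment alone is insufficient. Relatedly, the sup-extraction forces the crossing factor to be a genuine \emph{constant} (not $1+\lvert X_s\rvert$, since no polynomial weight is allowed inside the $p$th power), and the paper gets this from the split $\nabla U = f_1 + f_2 = \psi\nabla U + (1-\psi)\nabla U$ with a cutoff $\psi\equiv 1$ on $B_R$: $f_1$ is bounded and piecewise Lipschitz, $f_2$ is globally Lipschitz; you do not perform this split. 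Finally, a small but real slip: with your definition $g_t = e^{2\mu t}\lvert e_t\rvert^2$, strong monotonicity cancels $2\mu e^{2\mu t}\lvert e_t\rvert^2$ \emph{exactly}, leaving nothing negative to absorb the Young term; you need $e^{\mu t}\lvert e_t\rvert^2$ (and more generally $e^{\frac{p\mu}{2}t}\lvert e_t\rvert^p$) to leave a surplus $-\mu e^{\mu t}\lvert e_t\rvert^2$.
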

\begin{thm}\label{th: main theorem 2}
Let \Cref{assmp: growth assumption}, \Cref{assmp: strong mon} and \Cref{assmp: initial condition} hold. Let $p\geq 1$, $\beta>0$, $\pi_\beta $ and $\gamma_0>0$ be as above. Then there exists $c>0$ such that for every $\gamma \in (0,\gamma_0)$ the unadjusted Langevin algorithm (ULA) given in \eqref{eq: ULA} satisfies
\begin{equation}
W_p(\pi_\beta, \mathcal{L}(x_n)) \leq W_p(\xi, \pi_\beta)e^{-\mu \gamma n}+c\gamma^{1/4}.
\end{equation}
\end{thm}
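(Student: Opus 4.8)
The plan is to couple the ULA interpolation $(X_t)_{t \geq 0}$ from \eqref{eq: cont interp} with the true diffusion $(Y_t)_{t \geq 0}$ from \eqref{eq: main SDE} driven by the same Brownian motion, with the same initial condition $\xi$, and to control $E^{1/p}|X_t - Y_t|^p$ uniformly in $t$. Write $e_t := X_t - Y_t$. Since both processes share the additive noise, $e_t$ is absolutely continuous with $de_t = -(\nabla U(X_{\kappa_\gamma(t)}) - \nabla U(Y_t))\,dt$. The standard device for the $1/4$ rate is to decompose $\nabla U(X_{\kappa_\gamma(t)}) - \nabla U(Y_t) = (\nabla U(X_t) - \nabla U(Y_t)) + (\nabla U(X_{\kappa_\gamma(t)}) - \nabla U(X_t))$, apply \Cref{assmp: strong mon} to the first bracket to get a contractive $-\mu|e_t|^2$ term, and treat the second bracket — the \emph{discretisation error} $\nabla U(X_{\kappa_\gamma(t)}) - \nabla U(X_t)$ — as a perturbation bounded, via \Cref{assmp: growth assumption} and moment bounds on $X$, in terms of $|X_t - X_{\kappa_\gamma(t)}|$.

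Concretely, I would first establish uniform-in-time moment bounds $\sup_{t\geq 0} E|X_t|^p < \infty$ and $\sup_{t\geq 0} E|Y_t|^p < \infty$ for all $p \geq 1$; these follow from \Cref{assmp: strong mon} (which gives $\langle \nabla U(x), x\rangle \geq \mu|x|^2 - \langle \nabla U(0), x\rangle$, hence a dissipativity/drift condition) together with \Cref{assmp: growth assumption} and \Cref{assmp: initial condition}, using an exponential-weighting / Grönwall argument on $E|X_t|^p$ with a small correction for the fact that the drift is evaluated at the grid point $\kappa_\gamma(t)$ rather than at $t$. From these one gets the one-step bound $E^{1/p}|X_t - X_{\kappa_\gamma(t)}|^p \leq C\gamma^{1/2}$: indeed $X_t - X_{\kappa_\gamma(t)} = -(t - \kappa_\gamma(t))\nabla U(X_{\kappa_\gamma(t)}) + \sqrt{2/\beta}(W_t - W_{\kappa_\gamma(t)})$, where the drift piece contributes $O(\gamma)$ (using the moment bound and linear growth) and the Brownian increment contributes $O(\gamma^{1/2})$ in $L^p$.

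Next, I would compute $d|e_t|^2 = -2\langle \nabla U(X_{\kappa_\gamma(t)}) - \nabla U(Y_t), e_t\rangle\,dt$, split as above, and bound
\[
\frac{d}{dt}|e_t|^2 \leq -2\mu|e_t|^2 + 2|\nabla U(X_{\kappa_\gamma(t)}) - \nabla U(X_t)|\,|e_t| \leq -\mu|e_t|^2 + \frac{1}{\mu}|\nabla U(X_{\kappa_\gamma(t)}) - \nabla U(X_t)|^2,
\]
so that $\frac{d}{dt}(e^{\mu t}|e_t|^2) \leq \frac{1}{\mu}e^{\mu t}|\nabla U(X_{\kappa_\gamma(t)}) - \nabla U(X_t)|^2$. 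Under \Cref{assmp: growth assumption} only, $|\nabla U(X_{\kappa_\gamma(t)}) - \nabla U(X_t)| \leq 2m + L(|X_t| + |X_{\kappa_\gamma(t)}|)$, which by itself is only $O(1)$ in $L^2$ — not good enough. The trick is to use instead $|\nabla U(X_{\kappa_\gamma(t)}) - \nabla U(X_t)|^2 \leq |\nabla U(X_{\kappa_\gamma(t)}) - \nabla U(X_t)| \cdot (2m + L(|X_t| + |X_{\kappa_\gamma(t)}|))$ and to bound the \emph{first} factor by a strong-monotonicity-free estimate: actually the clean route is to keep $2|\nabla U(X_{\kappa_\gamma(t)}) - \nabla U(X_t)|\,|e_t|$ and apply Young's inequality after raising to the $p$th power, exploiting that, by \emph{monotonicity of $\nabla U$} along the segment between $X_t$ and $X_{\kappa_\gamma(t)}$, one has the crude but uniform-in-time bound $\langle \nabla U(X_{\kappa_\gamma(t)}) - \nabla U(X_t), X_{\kappa_\gamma(t)} - X_t\rangle \geq \mu|X_{\kappa_\gamma(t)} - X_t|^2 \geq 0$, so the scheme's drift increment is already partly controlled. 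The cleanest presentation: from $\frac{d}{dt}(e^{\mu t}|e_t|^p) \leq p\, e^{\mu t}|e_t|^{p-1}\big(-\mu|e_t| + |\nabla U(X_{\kappa_\gamma(t)}) - \nabla U(X_t)|\big) + \text{(sign adjustment)}$ one gets after Young $e^{\mu t}|e_t|^p \leq C\int_0^t e^{\mu s}|\nabla U(X_{\kappa_\gamma(s)}) - \nabla U(X_s)|^p\,ds + (\text{const})$, whence $E|e_t|^p \leq C e^{-\mu t}E|\xi - Y_0|^p + C\sup_s E|\nabla U(X_{\kappa_\gamma(s)}) - \nabla U(X_s)|^p$. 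Then bound $E|\nabla U(X_{\kappa_\gamma(s)}) - \nabla U(X_s)|^p$ by interpolation: it is $\leq \big(E|\nabla U(X_{\kappa_\gamma(s)}) - \nabla U(X_s)|^{2p}\big)^{1/2}\big(\text{something}\big)^{1/2}$... the honest statement is that, using linear growth for one factor and the $O(\gamma^{1/2})$ bound on $E|X_s - X_{\kappa_\gamma(s)}|^{2p}$ together with a Cauchy–Schwarz split $|\nabla U(a)-\nabla U(b)| \leq (2m + L|a| + L|b|)$ paired against the displacement, one obtains $E|\nabla U(X_{\kappa_\gamma(s)}) - \nabla U(X_s)|^p \leq C\gamma^{p/4}$ — the square root of the $\gamma^{1/2}$ displacement bound being the source of the $1/4$. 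Finally, combining with $W_p(\pi_\beta, \mathcal{L}(Y_n\gamma)) = 0$ if $Y_0 \sim \pi_\beta$ and the triangle inequality for $W_p$ (coupling $\xi$ optimally to $\pi_\beta$, then running both dynamics), we get $W_p(\pi_\beta, \mathcal{L}(x_n)) \leq W_p(\xi, \pi_\beta)e^{-\mu\gamma n} + c\gamma^{1/4}$ as claimed, noting $x_n = X_{n\gamma}$.

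The main obstacle is the middle step: squeezing a genuine $\gamma^{1/4}$ (rather than $\gamma^0$) out of $E|\nabla U(X_{\kappa_\gamma(s)}) - \nabla U(X_s)|^p$ when \Cref{assmp: growth assumption} provides \emph{no continuity} of $\nabla U$. The resolution hinges on never estimating $|\nabla U(X_{\kappa_\gamma(s)}) - \nabla U(X_s)|$ pointwise, but always pairing it with the displacement $X_s - X_{\kappa_\gamma(s)}$ (which is $O(\gamma^{1/2})$ in every $L^q$) inside the inner product $\langle \cdot, e_s\rangle$ and then using Cauchy–Schwarz/Young to split the $\gamma^{1/2}$-small factor as $\gamma^{1/4}\cdot\gamma^{1/4}$, absorbing one $\gamma^{1/4}$ into the contraction and leaving the other as the bias; the linear-growth and uniform moment bounds handle the non-small factor. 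A secondary technical point is making the uniform-in-time moment bounds for the Euler scheme rigorous despite the grid evaluation of the drift and the possibly non-zero $\nabla U(0)$; this is routine given \Cref{assmp: strong mon} and smallness of $\gamma < \gamma_0$ but must be done carefully to get constants independent of $n$.
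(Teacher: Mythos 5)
Your proposal has the right scaffolding (synchronous coupling, exponential weighting, uniform moment bounds, the $O(\gamma^{1/2})$ increment bound from Lemma~\ref{lemma: incr bound}), but the central decomposition you choose does not close, and you flag the problem yourself without resolving it.

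You split the drift difference as $\nabla U(X_{\kappa_\gamma(t)}) - \nabla U(Y_t) = (\nabla U(X_t) - \nabla U(Y_t)) + (\nabla U(X_{\kappa_\gamma(t)}) - \nabla U(X_t))$, apply \Cref{assmp: strong mon} to the first bracket, and are left trying to make $E\lvert \nabla U(X_{\kappa_\gamma(s)}) - \nabla U(X_s)\rvert^p$ small. As you acknowledge, under \Cref{assmp: growth assumption} alone this quantity is $O(1)$, not $o(1)$: $\nabla U$ has no continuity, so even when $\lvert X_s - X_{\kappa_\gamma(s)}\rvert$ is $O(\gamma^{1/2})$ the drift can jump by an order-one amount with probability that does not tend to zero in any way you can control without the hypersurface structure of \Cref{assmp: piecewise Lip}. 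The ``interpolation'' you then propose, ending in the claim $E\lvert \nabla U(X_{\kappa_\gamma(s)}) - \nabla U(X_s)\rvert^p \leq C\gamma^{p/4}$, is unjustified and in fact false in general: $\lvert \nabla U(a) - \nabla U(b)\rvert^p$ does not contain the displacement $a - b$ as a factor, so there is no small quantity to extract, and your Cauchy--Schwarz split pairs the drift difference against itself rather than against the displacement.

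The fix is to decompose the \emph{pairing vector}, not the drift. Writing $e_s = Y_s - X_s = (Y_s - X_{\kappa_\gamma(s)}) - (X_s - X_{\kappa_\gamma(s)})$, the generator term becomes
\begin{equation}
-\langle \nabla U(Y_s) - \nabla U(X_{\kappa_\gamma(s)}),\, Y_s - X_{\kappa_\gamma(s)}\rangle \lvert e_s\rvert^{p-2} + \langle \nabla U(Y_s) - \nabla U(X_{\kappa_\gamma(s)}),\, X_s - X_{\kappa_\gamma(s)}\rangle \lvert e_s\rvert^{p-2}.
\end{equation}
In the first term the arguments of $\nabla U$ now \emph{match} the displacement vector, so \Cref{assmp: strong mon} applies directly and yields $-\mu\lvert Y_s - X_{\kappa_\gamma(s)}\rvert^2 \lvert e_s\rvert^{p-2}$, which after expanding $Y_s - X_{\kappa_\gamma(s)} = e_s + (X_s - X_{\kappa_\gamma(s)})$ and Young's inequality gives the contraction $-\tfrac{p\mu}{2}\lvert e_s\rvert^p$ plus a $c\lvert X_s - X_{\kappa_\gamma(s)}\rvert^p$ remainder of order $\gamma^{p/2}$. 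In the second term the small factor $X_s - X_{\kappa_\gamma(s)}$ now sits \emph{inside} the inner product, so after Young and Cauchy--Schwarz one pays $(E\lvert \nabla U(Y_s) - \nabla U(X_{\kappa_\gamma(s)})\rvert^p)^{1/2}$, which is $O(1)$ by linear growth and the moment bounds, times $(E\lvert X_s - X_{\kappa_\gamma(s)}\rvert^p)^{1/2} = O(\gamma^{p/4})$. This is precisely the ``pair the drift difference with the displacement'' idea you gesture at in your final paragraph, but it does not arise from your chosen decomposition: pairing $\nabla U(X_{\kappa_\gamma(s)}) - \nabla U(X_s)$ against $e_s$ gives you $e_s$ as the second factor, not $X_s - X_{\kappa_\gamma(s)}$. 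The displacement only appears when you rewrite $e_s$ itself.
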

\begin{rem}
We note that Theorem \ref{th: main theorem} replicates the numerical error of order $1/2$, established in the case where $\nabla U$ is Lipschitz in \cite{articleula} (see Proposition 3 and Corollary 7). A numerical error of order $1$ is also established in the same paper under the assumption $\nabla U$ is smooth (see Corollary 9). However, it is shown in \cite{10.1093/imanum/draa078} that in general the Euler scheme cannot converge faster than rate $3/4$ for discontinuous drifts, which suggests that Theorem \ref{th: main theorem} cannot be improved beyond numerical error of order $3/4$.
\end{rem}
\begin{rem}
For the Lipschitz case, letting $L>0$ denote the Lipschitz constant, the stepsize restriction is $\gamma \leq \frac{1}{\mu+L}$ (see \cite{articleula}). This is comparable to our stepsize restriction in the case $\mu \sim L$, as in the case for instance where $\nabla U(x)$ is a perturbation of $\mu x$, so that $\pi_\beta$ is a perturbation of a Gaussian. However as $L\to\infty$ for fixed $\mu$, the  stepsize restriction in Theorems \ref{th: main theorem} and \ref{th: main theorem 2} scales like $O(L^{-2})$ compare to $O(L^{-1})$ in \cite{articleula}.
\end{rem}
\begin{rem}
    Since the arguments in Section \ref{sec: crossing bounds} are significantly intricate, we choose not to track any dependence besides the step-size $\gamma>0$. However, one can nonetheless make the important observation that the bound in Theorem $\ref{th: main theorem}$ blows up as $\beta\to\infty$. This arises due to \eqref{eq: local time t} and \eqref{eq: local time t 2}, which cause the bound in Lemma \ref{lem: time dep local time} to scale with $\beta$, and therefore (due to \eqref{eq: v1}) causes the bound in Lemma \ref{lemma: cross bdd p=1 1} to scale similarly. Therefore, our results can be seen as an example of `regularisation by noise', since the noise is essential for avoiding pathological behaviour, and our arguments become less effective as the process becomes more deterministic. This is in contrast to Theorem \ref{th: main theorem 2}, where the bound will strengthen as $\beta\to\infty$, since the error in Theorem \ref{th: main theorem 2} depends on the increment bound in Lemma \ref{lemma: incr bound}, which will decrease and in fact be of order $O(l^p)$ in the $\beta\to\infty$ limit.
\end{rem}

Although we have presented Theorems \ref{th: main theorem} and \ref{th: main theorem 2} in terms of Wasserstein distance (as is standard in the algorithms literature), we prove the theorem by obtaining bounds in $L^p$ of the kind common in the numerics literature, see  \eqref{eq: uniform numerics bound}. In particular, we show that that if $Y_0=X_0=\xi$, then under the hypothesis of Theorem \ref{th: main theorem} one has
\begin{equation}
\sup_{t\geq 0} \; (E[\lvert Y_t-X_t\rvert^p])^{1/p}\leq c\gamma^{1/2},
\end{equation}
and similarly under the hypothesis of Theorem \ref{th: main theorem 2} one has
\begin{equation}
\sup_{t\geq 0}\; ( E[\lvert Y_t-X_t\rvert^p])^{1/p}\leq c\gamma^{1/4}.
\end{equation}
\section{Preliminary Bounds}\label{sec: prelim bounds}
\begin{note}
From now on the generic constant $c>0$ changes from line to line and is \textbf{independent of the stepsize $\gamma>0$ and time parameters $t,s,u>0$}.
\end{note}
\begin{note}\label{note: youngs ineq}
We use often the following corollary of Young's inequality: for every $p,q>0$ and $a>0$ there exists $c>0$ such that $x^py^q\leq ax^{p+q}+cy^{p+q}$. This follows by applying Young's inequality to $x^py^q=(a^{\frac{p}{p+q}}x^p)(a^{-\frac{p}{p+q}}y^q)$.
\end{note}
In this section we prove standard moment and increment bounds for the Langevin dynamics \eqref{eq: main SDE}, the Euler scheme \eqref{eq: ULA} and and its continuous interpolation \eqref{eq: cont interp}. These bounds are very similar to those in \cite{articleula} and other references, with minor alterations to account for the lack of continuity of $\nabla U$. Recall that \Cref{assmp: piecewise Lip} implies \Cref{assmp: growth assumption}.

All Lemmas in Sections \ref{sec: prelim bounds} and \ref{sec: crossing bounds} are proven under the expectation $E^x$ defined in Section \ref{subsec: setup} (which implies the same bound under $E$ with the use of \Cref{assmp: initial condition}). This is due to the proof of Proposition \ref{prop: exponential crossing}, where the dependence on the initial condition (and the fact that the $\Sigma_j$ lie in a compact set) is used at the conclusion of the proof.
\begin{lemma}\label{lemma: moment bounds for cont}
Let \Cref{assmp: growth assumption} and \Cref{assmp: strong mon} hold. Let $p>0$ and $x \in \mathbb{R}^d \setminus \cup_{j=1}^{n_2}\Sigma_j$. Then one has
\begin{equation}
\sup_{t\geq 0} E^x \lvert Y_t\rvert^p \leq c(1+\lvert x \rvert^p).
\end{equation}
\end{lemma}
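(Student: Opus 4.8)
The plan is to derive a dissipativity (drift-toward-the-origin) estimate from \Cref{assmp: strong mon} and \Cref{assmp: growth assumption}, and then run a standard Lyapunov/Grönwall argument. First I would fix a reference point $x_*\in\mathbb{R}^d\setminus\cup_{j=1}^{n_2}\Sigma_j$ (so that $\nabla U$ is defined there) and set $b_*:=\nabla U(x_*)$. Applying \eqref{eq: strong mon condition} with $y$ and $x_*$ gives $\langle \nabla U(y)-b_*,\, y-x_*\rangle\geq \mu\lvert y-x_*\rvert^2$ for a.e.\ $y$, and combining this with the linear growth bound $\lvert\nabla U(y)\rvert\leq m+L\lvert y\rvert$ to control the cross terms $\langle\nabla U(y),x_*\rangle$ and $\langle b_*,y\rangle$ (which are only linear in $\lvert y\rvert$, hence lower order than the quadratic term), one obtains constants $a>0$ and $b\geq 0$, depending only on $\mu,L,m,x_*$, such that
\begin{equation}
\langle \nabla U(y),\, y\rangle \geq a\lvert y\rvert^2 - b \qquad \text{for a.e. } y\in\mathbb{R}^d.
\end{equation}

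Next, for $p\geq 2$ I would apply It\^o's formula to the smooth Lyapunov function $V(y):=(1+\lvert y\rvert^2)^{p/2}$ along the solution $(Y_t)$ of \eqref{eq: main SDE}, stopped at $\tau_N:=\inf\{t\geq 0:\lvert Y_t\rvert\geq N\}$. Since $\nabla U$ enters only as an integrand evaluated at $Y_s$, and (as in Proposition \ref{prop: well posedness}) the law of $Y_s$ is absolutely continuous so that $\int_0^t \mathbf{1}_{\{Y_s\in\cup_j\Sigma_j\}}\,ds=0$ almost surely, the formula is valid despite the discontinuities of $\nabla U$. The drift contribution is $-p(1+\lvert Y_s\rvert^2)^{p/2-1}\langle\nabla U(Y_s),Y_s\rangle$, which by the displayed estimate is bounded above by $-c_1 V(Y_s)+c_2$; the It\^o correction term is of order $(1+\lvert Y_s\rvert^2)^{p/2-1}$, hence $\leq \tfrac{c_1}{2}V(Y_s)+c_3$; and the local-martingale term has zero expectation after stopping. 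Taking $E^x$ (as defined in Section \ref{subsec: setup}) therefore yields, for a possibly smaller $c_1>0$,
\begin{equation}
\frac{d}{dt}\, E^x V(Y_{t\wedge\tau_N}) \leq -c_1\, E^x V(Y_{t\wedge\tau_N}) + c_4 .
\end{equation}
Grönwall's inequality gives $E^x V(Y_{t\wedge\tau_N})\leq V(x)e^{-c_1 t}+c_4/c_1\leq V(x)+c_4/c_1$, uniformly in $t$ and $N$; letting $N\to\infty$ by Fatou and using $\lvert y\rvert^p\leq V(y)$ together with $V(x)\leq c(1+\lvert x\rvert^p)$ proves the claim for $p\geq 2$. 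For $0<p<2$ it follows immediately from the case $p=2$ by Jensen's inequality, $(E^x\lvert Y_t\rvert^p)^{1/p}\leq (E^x\lvert Y_t\rvert^2)^{1/2}$.

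The argument is conceptually routine — it is the classical dissipativity-plus-Grönwall moment bound — and I expect the only delicate points to be: (i) extracting the strict dissipativity constant $a>0$ cleanly from the interplay of strong monotonicity and linear growth, ensuring the cross terms genuinely sit at lower order; and (ii) justifying It\^o's formula and the vanishing expectation of the stochastic integral in the presence of the discontinuous, merely-a.e.-defined $\nabla U$, which is handled by the localization through $\tau_N$ combined with the absolute continuity of the marginal laws of $Y_s$.
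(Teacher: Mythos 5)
Your proposal is correct and follows essentially the same route as the paper: a dissipativity estimate extracted from strong monotonicity plus linear growth, an application of It\^{o}'s formula to a power-type Lyapunov function with localization via a stopping time, a Gr\"onwall/exponential integration, and a passage to the limit by Fatou. The only cosmetic differences are that the paper applies It\^{o} directly to $e^{p\mu t/4}\lvert Y_t\rvert^p$ and shifts coordinates so that $\nabla U(0)$ is defined, whereas you use the smoother Lyapunov function $(1+\lvert y\rvert^2)^{p/2}$ with a reference point $x_*$ and handle $0<p<2$ separately via Jensen; both are standard variants of the same argument.
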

\begin{proof}
Let $\tau_R:=\inf \{t\geq 0 \vert \; \lvert Y_t \rvert =R  \}$. Then one may begin by applying It\^{o}'s formula on $[0,\tau_R \wedge t]$, so that by the boundedness of the coefficients under \Cref{assmp: growth assumption}, the stochastic integral vanishes and one obtains
\begin{align}
E^xe^{p\mu (\tau_R \wedge t)/4}\lvert Y_{\tau_R \wedge t}\rvert^p&=\lvert x\rvert^p+E^x\int^{\tau_R \wedge t}_0 e^{p\mu s/4}(\frac{p\mu}{4}\lvert Y_s \rvert^p-p\langle \nabla U(Y_s), Y_s\rangle \lvert Y_s\rvert^{p-2}+ c\lvert Y_s\rvert^{p-2})ds,
\end{align}
Now let us assume without loss of generality that $\nabla U(0)$ is well defined (since $\nabla U$ is well defined almost everywhere, this holds up to a slight shift of the coordinate system). Then we may use \Cref{assmp: strong mon} to write
\begin{align}\label{eq: coerc}
-\langle \nabla U(x), x\rangle &\leq -\langle \nabla U(x)-\nabla U(0), x\rangle -\langle \nabla U(0), x\rangle \nonumber \\
&\leq -\mu\lvert x \rvert^2+\frac{\mu}{2}\lvert x \rvert^2+\frac{2}{\mu} \lvert \nabla U(0)\rvert^2 \nonumber \\
&\leq -\frac{\mu}{2}\lvert x \rvert^2+c,
\end{align}
so that applying this bound, and using Young's inequality to bound
 the last term (see Note \ref{note: youngs ineq}) as
 \begin{equation}
    c\lvert Y_s\rvert^{p-2}\leq \frac{p\mu}{4}\lvert Y_s\rvert^p+c,
 \end{equation}
 one has
\begin{align}\label{eq: stopped bound}
E^xe^{p\mu (\tau_R \wedge t)/4}\lvert Y_{\tau_R \wedge t} \rvert^p&=\lvert x\rvert^p+c\int^{\tau_R \wedge t}_0 e^{p\mu s/4}ds \leq \lvert x\rvert^p+ce^{p\mu (\tau_R \wedge t)/4} \leq \lvert x\rvert^p+ce^{p\mu t/4}.
\end{align}
Now observe that by continuity $\sup_{s\in [0,t]}\lvert Y_s\rvert <\infty$ almost surely, so that by Fatou's Lemma
\begin{equation}
\liminf_{R\to\infty} E^x e^{p\mu (\tau_R \wedge t)/4}\lvert Y_{\tau_R \wedge t} \rvert^p \geq E^x \liminf_{R\to\infty}  e^{p\mu (\tau_R \wedge t)/4}\lvert Y_{\tau_R \wedge t} \rvert^p= e^{p\mu t/4}E^x \lvert Y_t \rvert^p.
\end{equation}
Therefore since the RHS of \eqref{eq: stopped bound} is independent of $R$, we may take $\liminf_{R\to\infty}$ and divide through by $e^{p\mu t/4}$, so that
\begin{equation}
E^x \lvert Y_t \rvert^p \leq e^{-p\mu t/4}\lvert x\rvert^p+c\leq \lvert x\rvert^p+c,
\end{equation}
at which point the result follows since this bound is independent of $t\geq 0$.
\end{proof}

\begin{lemma}\label{lemma: moment bounds for scheme}
Let \Cref{assmp: growth assumption} and \Cref{assmp: strong mon} hold. Let $p>0$.  Let $\gamma_0 \in (0, \frac{\mu}{2L^2})$. Then there exists $c>0$ such that for every $\gamma \in (0,\gamma_0)$ and $x \in \mathbb{R}^d \setminus \cup_{j=1}^{n_2}\Sigma_j$ one has
\begin{equation}
\sup_{t\geq 0} E^x \lvert X_t\rvert^p \leq c(1+\lvert x \rvert^p).
\end{equation}
\end{lemma}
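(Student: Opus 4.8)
The plan is to mimic the proof of Lemma \ref{lemma: moment bounds for cont}, adapting the It\^o argument to the Euler interpolation \eqref{eq: cont interp x}. First I would fix $x\in\mathbb{R}^d\setminus\cup_{j=1}^{n_2}\Sigma_j$ and apply It\^o's formula to $e^{p\mu t/8}\lvert X_t\rvert^p$ (with a slightly smaller exponent than in the diffusion case, to leave room for error terms). The key difference is that the drift is evaluated at the frozen point $X_{\kappa_\gamma(s)}$ rather than at $X_s$, so the coercivity estimate \eqref{eq: coerc} cannot be applied directly. I would write $\langle\nabla U(X_{\kappa_\gamma(s)}),X_s\rangle = \langle\nabla U(X_{\kappa_\gamma(s)}),X_{\kappa_\gamma(s)}\rangle + \langle\nabla U(X_{\kappa_\gamma(s)}),X_s-X_{\kappa_\gamma(s)}\rangle$, use the coercivity bound $-\langle\nabla U(y),y\rangle\le -\tfrac{\mu}{2}\lvert y\rvert^2+c$ (valid by \Cref{assmp: strong mon}, exactly as in \eqref{eq: coerc}) on the first term, and control the second term using the linear growth bound \Cref{assmp: growth assumption} together with bounds on the one-step increment $X_s-X_{\kappa_\gamma(s)}$.

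The heart of the matter is therefore the standard Euler-scheme bookkeeping: on $[\kappa_\gamma(s),s]$ one has $X_s-X_{\kappa_\gamma(s)} = -(s-\kappa_\gamma(s))\nabla U(X_{\kappa_\gamma(s)})+\sqrt{2/\beta}(W_s-W_{\kappa_\gamma(s)})$, so $\lvert X_s-X_{\kappa_\gamma(s)}\rvert\le \gamma(m+L\lvert X_{\kappa_\gamma(s)}\rvert)+\sqrt{2/\beta}\lvert W_s-W_{\kappa_\gamma(s)}\rvert$. Plugging this in and using $\lvert\nabla U(X_{\kappa_\gamma(s)})\rvert\le m+L\lvert X_{\kappa_\gamma(s)}\rvert$ gives, after Young's inequality, a bound of the form $-\langle\nabla U(X_{\kappa_\gamma(s)}),X_s\rangle \le -\tfrac{\mu}{2}\lvert X_{\kappa_\gamma(s)}\rvert^2 + c(1+\gamma L^2\lvert X_{\kappa_\gamma(s)}\rvert^2) + (\text{martingale-type increment terms})$. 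The stepsize restriction $\gamma_0<\mu/(2L^2)$ is exactly what guarantees $-\tfrac{\mu}{2}+c\gamma L^2$ stays negative (with a uniform gap), so the effective drift is still dissipative; one then relates $\lvert X_{\kappa_\gamma(s)}\rvert$ back to $\lvert X_s\rvert$ at the cost of more increment terms, or equivalently works with a supremum over the grid. The terms involving $W_s-W_{\kappa_\gamma(s)}$ have all moments bounded by powers of $\gamma$, so after taking expectations (killing the genuine stochastic integral via a localizing sequence $\tau_R$ as in the previous lemma, justified by boundedness of the drift on $[0,\tau_R]$) they contribute only $O(1)$ to the right-hand side.

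Concretely, after localization and It\^o, I expect to reach an inequality of the shape
\begin{equation}
e^{p\mu t/8}E^x\lvert X_{t\wedge\tau_R}\rvert^p \le \lvert x\rvert^p + c\int_0^{t\wedge\tau_R} e^{p\mu s/8}\,ds \le \lvert x\rvert^p + c e^{p\mu t/8},
\end{equation}
using that the negative dissipative term dominates the $\tfrac{p\mu}{8}\lvert X_s\rvert^p$ term from differentiating the exponential (again thanks to the stepsize restriction and Young's inequality absorbing lower-order powers). Then Fatou's lemma in $R$ (valid since $\sup_{s\le t}\lvert X_s\rvert<\infty$ a.s. by continuity of the interpolation and finiteness of each increment) and division by $e^{p\mu t/8}$ give $E^x\lvert X_t\rvert^p \le e^{-p\mu t/8}\lvert x\rvert^p + c \le \lvert x\rvert^p+c$, uniformly in $t\ge 0$, which is the claim.

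The main obstacle is handling the frozen drift $\nabla U(X_{\kappa_\gamma(s)})$ cleanly: one must be careful that the increment estimates feeding back into the dissipativity bound do not themselves require a moment bound one is trying to prove. The standard fix — which I would follow — is to first establish a \emph{finite} bound $\sup_{t\in[0,T]}E^x\lvert X_t\rvert^p<\infty$ for each fixed $T$ (easy, by Gr\"onwall on $[0,T]$ using only linear growth), and only then run the dissipative argument above to upgrade finiteness to the uniform-in-time bound; alternatively, work grid-point by grid-point with the discrete recursion $E^x\lvert x_{n+1}\rvert^p$ and a one-step contraction estimate, which avoids the continuous-time subtleties entirely. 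Either route is routine once the stepsize restriction is used to keep the one-step map a contraction in the relevant sense.
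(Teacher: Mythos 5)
Your primary route (applying It\^o's formula to $e^{ct}\lvert X_t\rvert^p$ for the continuous interpolation, localizing, using coercivity at the frozen point, and relating the frozen point back to the current point) is genuinely different from the paper's, which takes exactly the ``alternative'' you mention at the end: a purely discrete recursion at grid points, followed by a simple off-grid estimate. Concretely, the paper computes $\lvert x_n-\gamma\nabla U(x_n)\rvert^2\leq r(\gamma)\lvert x_n\rvert^2+c\gamma$ with $r(\gamma)=\max\{1-\gamma\mu+2L^2\gamma^2,0\}$, unrolls the recursion $\lvert x_{n+1}\rvert^2\leq r(\gamma)\lvert x_n\rvert^2+c\gamma+\text{(noise terms)}$, kills the cross terms by independence of $z_{n+1}$ and $x_n$, and sums the geometric series uniformly in $\gamma\in(0,\gamma_0)$; the even powers $p=2q$ are then handled by an induction raising the same one-step inequality to the power $q+1$, general $p$ by Jensen, and finally $E^x\lvert X_t\rvert^p$ for $t$ off-grid is reduced to $E^x\lvert X_{\kappa_\gamma(t)}\rvert^p$ plus a Brownian increment. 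Your It\^o route can be pushed through, but you should be aware of two intertwined subtleties that the discrete approach sidesteps: (i) the coercive term you obtain is $-\tfrac{\mu}{2}\lvert X_{\kappa_\gamma(s)}\rvert^2\lvert X_s\rvert^{p-2}$, so you must convert $\lvert X_{\kappa_\gamma(s)}\rvert^2$ into $\lvert X_s\rvert^2$ via $\lvert X_{\kappa_\gamma(s)}\rvert^2\geq\tfrac12\lvert X_s\rvert^2-\lvert X_s-X_{\kappa_\gamma(s)}\rvert^2$, re-introducing the increment; and (ii) the mixed terms like $(m+L\lvert X_{\kappa_\gamma(s)}\rvert)\lvert W_s-W_{\kappa_\gamma(s)}\rvert\lvert X_s\rvert^{p-2}$, even after exploiting independence of $X_{\kappa_\gamma(s)}$ and $W_s-W_{\kappa_\gamma(s)}$ and Young's inequality, feed the grid-point moments into the bound you are trying to close. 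Your fix (first get finiteness on $[0,T]$ via Gr\"onwall, then upgrade) is legitimate but adds a layer of argument that the discrete recursion simply does not need; the paper's approach buys cleanliness and a sharper view of where $\gamma_0<\mu/(2L^2)$ enters (directly as $r(\gamma)<1$), while your route is closer in spirit to Lemma~\ref{lemma: moment bounds for cont} and would generalize more readily to schemes without a clean one-step structure.
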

\begin{proof}
Let us fix $\gamma_0 \in (0, \frac{\mu}{2L^2})$ and consider $X_t$ for stepsize $\gamma \in (0,\gamma_0)$. Recalling the discrete definition in \eqref{eq: ULA}, we begin by proving
\begin{equation}\label{eq: disc moment bounds p=2}
 \sup_{n\geq 0} E^x \lvert x_n\rvert^2 \leq c(1+\lvert x \rvert^2).
\end{equation}
Let us begin by applying \Cref{assmp: growth assumption} and \eqref{eq: coerc} and calculating
    \begin{align}
     \lvert x_n-\gamma \nabla U(x_n)  \rvert^2&= \lvert x_n \rvert^2+  \gamma^2 \lvert \nabla U(x_n)\rvert^2-2\gamma\langle \nabla U(x_n), x_n \rangle  \nonumber \\
     & \leq r(\gamma)\lvert x_n\rvert^2+c\gamma, \nonumber
    \end{align}
    where $r(\gamma) := \max\{ 1-\gamma \mu+2L^2\gamma^2, 0\}$, since if $1-\gamma \mu+2L^2\gamma^2\leq 0$,  we may bound by $0$. Then one has by the definition of \eqref{eq: ULA} that
    \begin{equation}\label{eq: inductive expression}
    \lvert x_{n+1}  \rvert^2 \leq r(\gamma)\lvert x_n \rvert^2+c\gamma+ \frac{2\gamma}{\beta}\lvert z_{n+1} \rvert^2+2\langle   x_n -\gamma \nabla U(x_n) , \sqrt{\frac{2\gamma}{\beta}} z_{n+1}\rangle 
    \end{equation}
    so that denoting $\eta_{i+1}:=2\sqrt{\gamma} \langle   x_n -\gamma \nabla U(x_n) , \sqrt{\frac{2}{\beta}}z_{i+1}\rangle$ one may use the standard identity that, for sequences $a_n, b_n \in \mathbb{R}$ and $r>0$ satisfying $a_{n+1}\leq ra_n+b_n$, one has
    \begin{equation}
        a_n\leq r^na_0+\sum_{i=0}^{n-1} r^i b_i,
    \end{equation}
    to calculate that
\begin{equation}\label{eq: p=2 moment bound}
\lvert x_n  \rvert^2\leq r(\gamma)^n\lvert x_0\rvert^2+c\gamma  \sum^{n-1}_{i=0}r(\gamma)^i+\sum^{n-1}_{i=0}(\frac{2\gamma}{\beta}\lvert z_{i+1} \rvert^2+\eta_{i+1} )r(\gamma)^i.
\end{equation}
Then since $x_n$ is independent of $z_{n+1}$, one has $E^x\eta_i=0$ for every $i\geq 1$. As a result, one has that
\begin{equation}
E^x\lvert x_n  \rvert^2\leq \lvert x\rvert^2+c\frac{1}{1-r(\gamma)} \leq \lvert x\rvert^2+c\frac{1}{2\mu-4L^2\gamma}.
\end{equation}
Note that since $\gamma_0 \in (0, \frac{\mu}{2L^2})$, these bounds are uniform over $\gamma \in (0,\gamma_0)$ and $n\in \mathbb{N}$, so \eqref{eq: disc moment bounds p=2} follows. Now let us assume 
\begin{equation}\label{eq: disc moment bounds}
 \sup_{n\geq 0} E^x \lvert x_n\rvert^p \leq c(1+\lvert x \rvert^p),
\end{equation}
holds for $p=2q$ for $q\in \mathbb{N}$. Let us prove by induction it therefore holds for $p=2q+2$, thus proving \eqref{eq: disc moment bounds} for $p\in 2\mathbb{N}$, at which point the result follows for all $p>0$, by Jensen's inequality. To this end let us raise \eqref{eq: inductive expression} to the power of $q+1$, expand into polynomial factors and take expectation, so that by \Cref{assmp: growth assumption} and the inductive moments bound assumption
    \begin{equation}
    E^x \lvert x_{n+1}  \rvert^{2q+2} \leq r(\gamma)^{q+1} E^x \lvert x_n \rvert^{2q+2}+c\gamma,
    \end{equation}
since all terms of lower order in $\gamma$ vanish due to the independence of $z_{n+1}$ with $x_n$. Therefore
    \begin{equation}
    E^x \lvert x_{n+1}  \rvert^{2q+2} \leq r(\gamma)^{n(q+1)} \lvert x \rvert^{2q+2} +c\gamma \frac{1}{1-r(\gamma)^{q+1}},
    \end{equation}
and so since $r(\gamma)^{q/2+1} \leq r(\gamma) <1$ one has
    \begin{equation}
    E^x \lvert x_{n+1}  \rvert^{2q+2} \leq r(\gamma)^{n(q+1)} \lvert x \rvert^{2q+2} +c\gamma \frac{1}{1-r(\gamma)} \leq \lvert x \rvert^{2q+2}+\frac{1}{2\mu-4L^2\gamma},
    \end{equation}
and \eqref{eq: disc moment bounds} follows uniformly over $\gamma \in (0, \gamma_0)$ for $p=2q+2$, as required. Now for the full result observe that by H\"{o}lder's inequality
\begin{align}
\lvert X_t\rvert^p&=c\lvert X_t-X_{\kappa_\gamma(t)}\rvert^p+c\lvert X_{\kappa_\gamma(t)}\rvert^p \nonumber \\
&\leq c\gamma^{p-1}\int^t_{\kappa_\gamma(t)} \lvert \nabla U(X_{\kappa_\gamma(t)})\rvert^{p}ds +c\lvert W_t-W_{\kappa_\gamma(t)}\rvert^p+c\lvert X_{\kappa_\gamma(t)}\rvert^p,
\end{align}
at which point one applies \Cref{assmp: growth assumption} to obtain
\begin{align}
\lvert X_t\rvert^p\leq c\lvert X_{\kappa_\gamma(t)}\rvert^p+ c\lvert W_t-W_{\kappa_\gamma(t)}\rvert^p ,
\end{align}
and the result follows by applying expectation, since $X_{\kappa_\gamma(t)}=x_{\kappa_\gamma(t)/\gamma}$.
\end{proof}

\begin{lemma}\label{lemma: incr bound}
Let \Cref{assmp: growth assumption} and \Cref{assmp: strong mon} hold. Let $p,s>0$ and $l\in [0,1]$. Let $\gamma_0 \in (0, \frac{\mu}{2L^2})$. Then there exists $c>0$ such that for every $\gamma\in (0,\gamma_0)$ and $x \in \mathbb{R}^d \setminus \cup_{j=1}^{n_2}\Sigma_j$
\begin{equation}
E^x \sup_{u\in[s,s+l]}\lvert X_u-X_s\rvert^p\leq cl^{p/2}(1+\lvert x \rvert^p).
\end{equation}
\end{lemma}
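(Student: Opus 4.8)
The plan is to decompose the increment using the continuous interpolation \eqref{eq: cont interp}: for $u\in[s,s+l]$ one has
\[
X_u-X_s=-\int_s^u \nabla U(X_{\kappa_\gamma(r)})\,dr+\sqrt{\tfrac{2}{\beta}}\,(W_u-W_s),
\]
so by the elementary inequality $(a+b)^p\le c(a^p+b^p)$ (valid for all $p>0$) it is enough to bound the drift contribution $I:=E^x\sup_{u\in[s,s+l]}\bigl\lvert\int_s^u \nabla U(X_{\kappa_\gamma(r)})\,dr\bigr\rvert^p$ and the noise contribution $II:=E^x\sup_{u\in[s,s+l]}\lvert W_u-W_s\rvert^p$ separately.

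For $I$, the supremum over $u$ is dominated by the full integral over $[s,s+l]$, and H\"older's inequality in the $r$ variable gives
\[
\sup_{u\in[s,s+l]}\Bigl\lvert\int_s^u \nabla U(X_{\kappa_\gamma(r)})\,dr\Bigr\rvert^p\le l^{p-1}\int_s^{s+l}\lvert \nabla U(X_{\kappa_\gamma(r)})\rvert^p\,dr.
\]
Applying the growth bound \Cref{assmp: growth assumption} pointwise, taking $E^x$, and invoking the uniform-in-time moment bound of Lemma \ref{lemma: moment bounds for scheme} at each grid point $\kappa_\gamma(r)$ (uniformly in $\gamma\in(0,\gamma_0)$), one obtains $I\le c\,l^{p-1}\cdot l\cdot(1+\lvert x\rvert^p)=c\,l^p(1+\lvert x\rvert^p)$. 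Since $l\in[0,1]$ we have $l^p\le l^{p/2}$, so $I\le c\,l^{p/2}(1+\lvert x\rvert^p)$.

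For $II$, the shifted process $(W_{s+v}-W_s)_{v\ge 0}$ is again a Brownian motion, so by Brownian scaling $\sup_{v\in[0,l]}\lvert W_{s+v}-W_s\rvert$ has the same law as $\sqrt{l}\,\sup_{v\in[0,1]}\lvert W_v\rvert$; since $E\sup_{v\in[0,1]}\lvert W_v\rvert^p<\infty$ (by Doob's maximal inequality together with Gaussian moments, or by Burkholder--Davis--Gundy), we get $II\le c\,l^{p/2}$. Combining the two bounds gives the claim. None of the steps presents a real obstacle; the only point requiring care is that the moment bound of Lemma \ref{lemma: moment bounds for scheme} must be applied uniformly over all grid points $\kappa_\gamma(r)$, $r\in[s,s+l]$, and uniformly in $\gamma\in(0,\gamma_0)$, which is precisely what that Lemma provides. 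For $p<1$ one may instead first reduce to $p\ge 1$ via Jensen's inequality.
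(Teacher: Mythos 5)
Your proposal is correct and takes essentially the same route as the paper: decompose the increment of \eqref{eq: cont interp} into drift and noise pieces, bound the drift via H\"older's inequality together with \Cref{assmp: growth assumption} and Lemma \ref{lemma: moment bounds for scheme}, and bound the noise via Brownian scaling (the paper leaves the last step slightly more implicit). Your added remarks on $l^p\le l^{p/2}$ for $l\in[0,1]$ and the reduction to $p\geq 1$ by Jensen are minor clarifications of exactly what the paper's argument uses.
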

\begin{proof}
One has
\begin{equation}
\lvert  X_u-X_s \rvert \leq \int^u_s \lvert \nabla U(X_{\kappa_\gamma(s)})\rvert ds +\sqrt{\frac{2\gamma}{\beta}} \lvert W_u-W_s\rvert,
\end{equation}
so that taking supremum and raising to the power $p$, by H\"{o}lder's inequality one has
\begin{equation}
\sup_{u\in[s,s+l]}\lvert  X_u-X_s \rvert ^p \leq cl^{p-1} \int^{s+l}_s \lvert \nabla U(X_{\kappa_\gamma(s)})\rvert^p ds +c\biggr(\frac{2\gamma}{\beta}\biggr)^{p/2} \sup_{u\in[s,s+l]} \lvert W_u-W_s\rvert^p,
\end{equation}
and therefore taking expectation and applying \Cref{assmp: growth assumption} and Lemma \ref{lemma: moment bounds for scheme}, the result follows.
\end{proof}

\begin{lemma}\label{lemma: ergodic bound}
Let \Cref{assmp: growth assumption} and \Cref{assmp: strong mon} hold. Let \eqref{eq: main SDE} have starting condition $Y_0=\xi$. Then
\begin{equation}
W_p(\pi_\beta, \mathcal{L}(Y_t))\leq W_p(\xi, \pi_\beta)e^{-\mu \gamma n}.
\end{equation}
\end{lemma}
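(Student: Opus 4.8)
The plan is to use a synchronous coupling of two copies of the diffusion \eqref{eq: main SDE} driven by the \emph{same} Brownian motion, and to turn \Cref{assmp: strong mon} into a pathwise exponential contraction. Let $(\xi,\tilde\xi)$ be an optimal coupling realising $W_p(\xi,\pi_\beta)$, with $\tilde\xi\sim\pi_\beta$; such a coupling exists since both marginals have finite $p$-th moments (for $\xi$ by \Cref{assmp: initial condition}, and for $\pi_\beta$ by Lemma \ref{lemma: moment bounds for cont} together with stationarity, or directly from strong convexity). Since $\pi_\beta$ has a Lebesgue density, $\tilde\xi\notin\cup_{j}\Sigma_j$ almost surely, so by Proposition \ref{prop: well posedness} the equation \eqref{eq: main SDE} driven by the same $W$ and started from $\tilde\xi$ has a unique strong solution $(\tilde Y_t)_{t\geq 0}$, and by invariance $\mathcal{L}(\tilde Y_t)=\pi_\beta$ for every $t\geq 0$. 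Then $(Y_t,\tilde Y_t)$ is a coupling of $\mathcal{L}(Y_t)$ and $\pi_\beta$, so it suffices to bound $(E|Y_t-\tilde Y_t|^p)^{1/p}$.

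Next I would set $D_t:=Y_t-\tilde Y_t$. Because the noise is additive and shared, the stochastic integrals cancel, giving $D_t=(\xi-\tilde\xi)-\int_0^t(\nabla U(Y_s)-\nabla U(\tilde Y_s))\,ds$; the integral is a.s.\ finite (it is part of the strong-solution property of \eqref{eq: main SDE}), so $t\mapsto D_t$ is absolutely continuous with $\dot D_t=-(\nabla U(Y_t)-\nabla U(\tilde Y_t))$ for a.e.\ $t$. Hence $t\mapsto|D_t|^2$ is absolutely continuous, and by \Cref{assmp: strong mon}
\[
\frac{d}{dt}|D_t|^2=-2\langle \nabla U(Y_t)-\nabla U(\tilde Y_t),\,Y_t-\tilde Y_t\rangle\leq -2\mu|D_t|^2\quad\text{for a.e.\ }t.
\]
Grönwall's inequality then yields $|D_t|\leq|D_0|e^{-\mu t}$ pathwise, hence $E|D_t|^p\leq e^{-p\mu t}E|D_0|^p=e^{-p\mu t}W_p(\xi,\pi_\beta)^p$. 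Taking $p$-th roots and using $W_p(\pi_\beta,\mathcal{L}(Y_t))\leq(E|D_t|^p)^{1/p}$ gives $W_p(\pi_\beta,\mathcal{L}(Y_t))\leq W_p(\xi,\pi_\beta)e^{-\mu t}$, which is the stated bound upon reading $t=n\gamma$ as elsewhere in the paper.

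The only delicate point is that $\nabla U$ is undefined on $\cup_j\Sigma_j$, so one must check that the ODE identity for $D_t$ and the chain rule for $|D_t|^2$ are legitimate. This is benign here: for each fixed $s>0$ the laws of $Y_s$ and $\tilde Y_s$ are absolutely continuous (by the non-degeneracy argument used in Proposition \ref{prop: well posedness}), so $\int_0^t\mathbf{1}\{Y_s\in\cup_j\Sigma_j\}\,ds=0$ a.s., the integrand $\nabla U(Y_s)-\nabla U(\tilde Y_s)$ is defined for a.e.\ $(s,\omega)$, and its local integrability follows from \Cref{assmp: growth assumption} and Lemma \ref{lemma: moment bounds for cont}. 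No transformation function or local-time machinery is needed, precisely because $D_t$ has no martingale part; this is why the ergodic estimate is elementary compared with the discretisation estimates proved later.
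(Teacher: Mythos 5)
Your proof is correct and follows the same synchronous-coupling argument as the paper: couple $Y$ to a stationary copy $\tilde Y$ driven by the same Brownian motion, note the difference process has no martingale part, use \Cref{assmp: strong mon} to get a deterministic exponential contraction, and conclude via the Wasserstein bound on the coupling. The only cosmetic differences are that you apply Gr\"onwall pathwise to $|D_t|^2$ rather than the chain rule to $e^{p\mu t}|D_t|^p$, and you spell out the (correct and helpful) observation that $\nabla U$ being undefined on $\cup_j\Sigma_j$ is harmless because $Y_s,\tilde Y_s$ avoid that set for a.e.\ $(s,\omega)$.
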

\begin{proof}
Let $(\tilde{Y}_t)_{t\geq0}$ be a solution of \eqref{eq: main SDE}, driven by the same noise as $(Y_t)_{t\geq 0}$ but with initial condition $\tilde{Y}_0 = \tilde{\xi}$ satisfying $\mathcal{L}(\tilde{\xi})=\pi_\beta$. Then since $\pi_\beta$ is the invariant measure of \eqref{eq: main SDE}, for all $t\geq 0$ one has $\mathcal{L}(\tilde{Y}_t)=\pi_\beta$. One then calculates by the chain rule (since $Y_t-\tilde{Y_t} $ has vanishing diffusion) that
\begin{align}
e^{p\mu t} E\lvert Y_t-\tilde{Y_t} \rvert^p &= E\lvert \xi - \tilde{\xi}\rvert^p +\int^t_0 p\mu e^{p\mu s}\lvert Y_s-\tilde{Y_s} \rvert^p ds \nonumber \\
&+p\int^t_0 e^{p\mu s} \langle Y_s-\tilde{Y_s} , \nabla U(Y_s)-\nabla U(\tilde{Y_s}) \rangle \lvert  Y_s-\tilde{Y_s} \rvert^{p-2} ds,
\end{align}
so that applying the convexity assumption \Cref{assmp: strong mon} and dividing through by $e^{p\mu t}$
\begin{equation}
E\lvert Y_t-\tilde{Y_t} \rvert^p \leq e^{-p\mu t} E\lvert \xi - \tilde{\xi}\rvert^p.
\end{equation}
Then the result follows since one has the trivial bound $W_p(\pi_\beta, \mathcal{L}(Y_t))^p\leq E\lvert Y_t-\tilde{Y_t} \rvert^p$, and by optimising the coupling of $\xi$ and $\tilde{\xi}$ by the definition of the Wasserstein distance.
\end{proof}

\section{Crossing Bounds}\label{sec: crossing bounds}
Since $\nabla U$ is not continuous, under \Cref{assmp: piecewise Lip} we can only bound the crucial discretisation term $\lvert \nabla U(X_u)-\nabla U(X_{\kappa_\gamma(u)}) \rvert$ when $X_u$ and $X_{\kappa_\gamma(u)}$ both lie in the same $\Phi_i$ for some $i=1,2,...,n_1$. In this section we prove that this is the case a large amount of the time on an unbounded interval. In particular, in Proposition \ref{prop: exponential crossing} we show that the $p$th moment of the $du$-measure of the time $X_u$ and $X_{\kappa_\gamma(u)}$ spend in \textit{different} of the $\Phi_i$ on $[0,t]$, weighted by $e^{qu}$, is $O(\gamma^{p/2}e^{pqt})$. The exponential weighting here follows from the proof of Theorem \ref{th: main theorem}, where one scales the discretisation error by an appropriate exponential in order to prove bounds that are uniform in time.

The strategy of this section is as follows. In Lemmas \ref{lemma: local time dif} and \ref{lem: time dep local time} we show how you can use the classical local time identity to yield a local time identity for functions of the process integrated on $[t,t+1]$ for $t\geq 0$. In Lemma \ref{lemma: cross bdd p=1 1} we use this to bound the times in which $X_u$ and $X_{\kappa_\gamma(u)}$ lie in different $\Phi_i$, for $u \in [t,t+1]$. In Lemma \ref{lemma: cross bdd p=1 2} we extend the previous lemma to the unbounded interval $[0,t]$, weighted by an exponential. Finally in Proposition \ref{prop: exponential crossing} we prove the full result.

\begin{lemma}\label{lemma: local time dif}
Let \Cref{assmp: piecewise Lip} and \Cref{assmp: strong mon} hold. Let $\gamma_0 \in (0, \frac{\mu}{2L^2})$. Let $P_j$ be the local signed distance function to $\Sigma_j$ given in Section \ref{subsec: delta nbhd} for $j=1,2,...,n_2$. Let $\mathcal{L}^{a,j}_t$ denote the local time of $P_j(X)$ at $a \in \mathbb{R}$ and time $t\geq 0$. Then there exists $c>0$ such that for every  $t\geq 0$, $\gamma \in (0,\gamma_0)$, $a\in [-\delta, \delta]$, $j \in \{1,2,...,n_2\}$ and $x \in \mathbb{R}^d \setminus \cup_{j=1}^{n_2}\Sigma_j$ one has 
\begin{equation}\label{eq: l time lemma 1}
E^x\mathcal{L}^{a,j}_t \leq c(1+\lvert x \rvert)(1+t),
\end{equation}
\begin{equation}\label{eq: l time lemma 2}
E^x(\mathcal{L}^{a,j}_{t+1}-\mathcal{L}^{a,j}_t )\leq c(1+\lvert x \rvert).
\end{equation}
\end{lemma}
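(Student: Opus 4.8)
The plan is to apply the classical local time (Tanaka) identity to the real-valued continuous semimartingale $Z^j_t := P_j(X_t)$ and take expectations, using crucially that $P_j$ and its first two derivatives are bounded. Since $P_j \in C^2(\mathbb{R}^d)$ (Section~\ref{subsec: delta nbhd}) and $(X_t)$ solves the It\^o equation \eqref{eq: cont interp x}, It\^o's formula gives the decomposition $Z^j_t = P_j(x) + A^j_t + M^j_t$, where $A^j_t = \int_0^t\big(-\nabla P_j(X_s)\cdot\nabla U(X_{\kappa_\gamma(s)}) + \tfrac1\beta\Delta P_j(X_s)\big)\,ds$ has finite variation and $M^j_t = \sqrt{2/\beta}\int_0^t \nabla P_j(X_s)\cdot dW_s$ is a continuous local martingale. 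Because $P_j$ is built from a compactly supported cutoff, the quantities $P_j$, $\nabla P_j$, $\Delta P_j$ are bounded by a constant independent of $j$; in particular $\langle M^j\rangle_t \le c\,t$, so $M^j$ is a true $L^2$-martingale.

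Applying the local time identity to $|Z^j_\cdot - a|$ on a generic interval $[s_0,s_1]$ then yields, up to the usual multiplicative convention,
\[
\mathcal{L}^{a,j}_{s_1} - \mathcal{L}^{a,j}_{s_0} = |Z^j_{s_1}-a| - |Z^j_{s_0}-a| - \int_{s_0}^{s_1}\operatorname{sgn}(Z^j_s - a)\,dA^j_s - \int_{s_0}^{s_1}\operatorname{sgn}(Z^j_s - a)\,dM^j_s .
\]
Taking $E^x$ annihilates the last stochastic integral, since its integrand is bounded by $1$ and $M^j$ is an $L^2$-martingale. I would then specialise to $(s_0,s_1)=(0,t)$ for \eqref{eq: l time lemma 1} and to $(s_0,s_1)=(t,t+1)$ for \eqref{eq: l time lemma 2}.

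It remains to bound the surviving terms uniformly in $a\in[-\delta,\delta]$ and $j$. Boundedness of $P_j$ gives $|Z^j_{s_i}-a|\le c$. For the drift contribution I bound it by the total variation of $A^j$ and use boundedness of $\nabla P_j,\Delta P_j$ together with the linear growth bound \Cref{assmp: growth assumption}:
\[
\Big|\int_{s_0}^{s_1}\operatorname{sgn}(Z^j_s-a)\,dA^j_s\Big| \le \int_{s_0}^{s_1}\big(|\nabla P_j(X_s)|\,|\nabla U(X_{\kappa_\gamma(s)})| + \tfrac1\beta|\Delta P_j(X_s)|\big)\,ds \le c\int_{s_0}^{s_1}\big(1+|X_{\kappa_\gamma(s)}|\big)\,ds ;
\]
taking $E^x$ and invoking the uniform-in-$\gamma$ moment bound Lemma~\ref{lemma: moment bounds for scheme} (so $E^x|X_{\kappa_\gamma(s)}|\le c(1+|x|)$) bounds this by $c(1+|x|)(s_1-s_0)$. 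Collecting everything gives $E^x\mathcal{L}^{a,j}_t \le c + c(1+|x|)t \le c(1+|x|)(1+t)$ and $E^x(\mathcal{L}^{a,j}_{t+1}-\mathcal{L}^{a,j}_t)\le c(1+|x|)$, with the constant depending only on the sup-norms of $P_j$ and its derivatives, on $\beta,m,L,\mu,\gamma_0$, and on the finite number $n_2$ of hypersurfaces — hence not on $a$, $j$, $t$, $\gamma$.

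I do not expect a real obstacle here: the entire content sits in the classical local time identity, which trades the local time for boundary terms plus a finite-variation drift plus a martingale, each of which is controlled by the sup-norm bounds on $P_j$, the growth bound on $\nabla U$, and the already-established moment estimates. The only points needing a word of care are (i) that $M^j$, and hence $\int\operatorname{sgn}(Z^j - a)\,dM^j$, is a genuine martingale — immediate from boundedness of $\nabla P_j$ — and (ii) that every bound is uniform in $a$ and in $\gamma\in(0,\gamma_0)$, which holds because none of the constants above depend on $a$ and Lemma~\ref{lemma: moment bounds for scheme} is $\gamma$-uniform. This is precisely why the paper can subsequently use this lemma as the workhorse behind Lemma~\ref{lem: time dep local time} and Proposition~\ref{prop: exponential crossing}.
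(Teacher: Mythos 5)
Your proof is correct and takes essentially the same approach as the paper: apply It\^o's formula to $P_j(X_t)$, invoke the Tanaka--Meyer identity, kill the martingale part on taking $E^x$, and control the finite-variation part via boundedness of $\nabla P_j$, $\Delta P_j$, the growth bound on $\nabla U$ and the $\gamma$-uniform moment bound Lemma~\ref{lemma: moment bounds for scheme}. The only (minor, and slightly more economical) deviation is in \eqref{eq: l time lemma 2}, where you bound the boundary term $|P_j(X_{t+1})-a|-|P_j(X_t)-a|$ directly by the sup-norm of $P_j$, whereas the paper passes through $|P_j(X_{t+1})-P_j(X_t)|$ and uses Lipschitzness of $P_j$ together with the increment bound Lemma~\ref{lemma: incr bound}.
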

\begin{proof}
Since $P_j \in C^2(\mathbb{R}^d)$ one may apply the classical It\^{o}'s formula to obtain
\begin{equation}
P_j(X_t)= P_j(\xi)+\int^t_0 \biggr (-\nabla P_j(X_u)\nabla U(X_{\kappa_\gamma(u)})+\frac{1}{\beta} \Delta P_j(X_{\kappa_\gamma(u)}) \biggr) du+\sqrt{\frac{2}{\beta}}\int^t_0 \nabla P_j(X_u) dW_u
\end{equation}
Therefore by the classical local time identity, or Tanaka-Meyer identity (see (7.9) on page 220 in \cite{karatzas1991brownian}), one has
\begin{align}\label{eq: classical local time}
E^x \mathcal{L}^{a,j}_t&=\frac{1}{2}E^x \lvert P_j(X_t)-a \rvert -  \frac{1}{2}\lvert P_j(x)-a \rvert-\frac{1}{2}E^x \int^t_0 sign(P_j(X_s)-a)\nabla P_j(X_s) \nabla U(X_{\kappa_\gamma(s)}) ds \nonumber \\
&-\frac{1}{2\beta}E^x \int^t_0 sign(P_j(X_s)-a) \Delta P_j(X_s)  ds,
\end{align}
Now observe that since $P_j \in C^2(\mathbb{R}^d)$ and is supported on a compact set, $P_j$, $\nabla P_j$ and $\nabla^2 P_j$ are bounded. Therefore 
\begin{align}
E^x \mathcal{L}^{a,j}_t&\leq \frac{1}{2} E^x \lvert P_j(X_t)-a \rvert -  \frac{1}{2} \lvert P_j(x)-a \rvert+c \int^t_0 E^x\lvert \nabla U(X_{\kappa_\gamma(s)}) \rvert ds \nonumber \\
&-\frac{1}{2\beta}E^x \int^t_0 sign(P_j(X_s)-a) \Delta P_j(X_s)  ds,
\end{align}
so by the triangle inequality, \Cref{assmp: growth assumption} (since \Cref{assmp: piecewise Lip} implies \Cref{assmp: growth assumption}) and Lemma \ref{lemma: moment bounds for scheme}, one can ensure all terms are above bounded by $c(1+\lvert x \rvert)(1+t)$, and \eqref{eq: l time lemma 1} follows. For the second bound first observe that by the triangle inequality
\begin{equation}\label{eq: Pi dif}
\lvert P_j(X_{t+1})-a \rvert -\lvert P_j(X_t)-a \rvert \leq \lvert P_j(X_{t+1})-P_j(X_t) \rvert ,
\end{equation}
so that by \eqref{eq: classical local time} one has that the difference $E^x \mathcal{L}^{a,j}_{t+1}-E^x \mathcal{L}^{a,j}_t$ is given by integrals over $[t,t+1]$ and \eqref{eq: Pi dif}, so one may obtain
\begin{align}
E^x \mathcal{L}^{a,j}_{t+1}-E^x \mathcal{L}^{a,j}_t&\leq E^x \lvert P_j(X_{t+1})-P_j(X_t) \rvert +c.
\end{align}
Now observe that additionally $ P_j$ is Lipschitz since it has uniformly bounded first derivative, at which point \eqref{eq: l time lemma 2} follows from Lemma \ref{lemma: incr bound} with $l=1$.
\end{proof} 
\begin{lemma}\label{lem: time dep local time}
Let \Cref{assmp: piecewise Lip} and \Cref{assmp: strong mon} hold. Let $\gamma_0 \in (0, \frac{\mu}{2L^2})$. Let $g: \mathbb{R}\to\mathbb{R}$ satisfy $0\leq g \leq c$ and suppose $supp(g)\subset (-\delta,\delta)$ for $\delta>0$ as in Proposition \ref{prop: good delta}. Then there exists $c>0$ such that for every $t\geq 0$, $\gamma \in (0,\gamma_0)$, $a\in [-\delta, \delta]$, $j \in \{1,2,...,n_2\}$ and $x \in \mathbb{R}^d \setminus \cup_{j=1}^{n_2}\Sigma_j$ one has
\begin{equation}
E^x\int^{t+1}_t g(P_j(X_s))ds\leq c(1+\lvert x \rvert)\int^\delta_{-\delta} g(s)ds.
\end{equation}
\end{lemma}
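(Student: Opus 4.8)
The plan is to use the occupation time formula (local time identity) to convert the time integral $\int_t^{t+1} g(P_j(X_s))\,ds$ into an integral of $g$ against the local time $\mathcal L^{a,j}$. Recall the classical occupation times formula: for a continuous semimartingale $Z$ with quadratic variation $\langle Z\rangle$ and local time $(\mathcal L^a_t)_a$, one has $\int_0^t h(Z_s)\,d\langle Z\rangle_s = \int_{\mathbb R} h(a)\,\mathcal L^a_t\,da$ for every nonnegative Borel $h$. Here I would take $Z_s = P_j(X_s)$, which by the proof of Lemma \ref{lemma: local time dif} is a continuous semimartingale (via It\^o's formula applied to $P_j\in C^2(\mathbb R^d)$), whose martingale part has quadratic variation $d\langle Z\rangle_s = \tfrac{2}{\beta}\lvert \nabla P_j(X_s)\rvert^2\,ds$.

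First I would note that on $\Sigma_j^\delta$ we have $P_j = \rho_j$ and, by Proposition \ref{prop: good delta}(ii), $\lvert \nabla \rho_j\rvert = 1$ there; since $supp(g)\subset(-\delta,\delta)$, the integrand $g(P_j(X_s))$ is supported on the event $\{X_s\in\Sigma_j^\delta\}$, so we may freely insert the factor $1 = \lvert \nabla P_j(X_s)\rvert^2$ whenever $g(P_j(X_s))\neq 0$. Hence
\begin{equation}
\int_t^{t+1} g(P_j(X_s))\,ds = \frac{\beta}{2}\int_t^{t+1} g(P_j(X_s))\,\lvert\nabla P_j(X_s)\rvert^2\,ds = \frac{\beta}{2}\Bigl(\int_0^{t+1} - \int_0^t\Bigr) g(Z_s)\,d\langle Z\rangle_s.
\end{equation}
Applying the occupation times formula to each of the two integrals and subtracting gives $\int_t^{t+1} g(P_j(X_s))\,ds = \tfrac{\beta}{2}\int_{\mathbb R} g(a)\,(\mathcal L^{a,j}_{t+1} - \mathcal L^{a,j}_t)\,da$. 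Since $supp(g)\subset(-\delta,\delta)$, the $a$-integral is effectively over $[-\delta,\delta]$, where Lemma \ref{lemma: local time dif}, equation \eqref{eq: l time lemma 2}, supplies the uniform bound $E^x(\mathcal L^{a,j}_{t+1} - \mathcal L^{a,j}_t)\le c(1+\lvert x\rvert)$. Taking $E^x$, interchanging expectation and the ($da$ and $ds$) integrals by Tonelli (everything nonnegative), and pulling this bound out yields $E^x\int_t^{t+1} g(P_j(X_s))\,ds \le \tfrac{\beta}{2}c(1+\lvert x\rvert)\int_{-\delta}^{\delta} g(a)\,da$, which is the claim (absorbing $\beta/2$ into the generic constant $c$, consistent with the Remark that $\beta$-dependence is folded into $c$).

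The main obstacle is the justification of inserting $\lvert\nabla P_j(X_s)\rvert^2$: one must argue carefully that the process $X_s$ spends zero Lebesgue-($ds$) time on the set $\{\lvert\nabla P_j(X_s)\rvert \neq 1\}\cap\{g(P_j(X_s))\neq 0\}$ — which follows because $g(P_j(X_s))\neq 0$ forces $P_j(X_s)\in(-\delta,\delta)$, i.e. $X_s\in\Sigma_j^\delta$, and there $P_j\equiv\rho_j$ with gradient of norm exactly $1$ by Proposition \ref{prop: good delta}. A secondary technical point is identifying $d\langle Z\rangle_s$ correctly: from the It\^o expansion in the proof of Lemma \ref{lemma: local time dif} the only martingale contribution is $\sqrt{2/\beta}\int_0^t \nabla P_j(X_u)\,dW_u$, so $\langle Z\rangle_t = \tfrac{2}{\beta}\int_0^t \lvert\nabla P_j(X_u)\rvert^2\,du$; this is where the constant $\beta/2$ enters. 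Everything else is routine: Tonelli for the interchange of integrals, and direct application of the already-established estimate \eqref{eq: l time lemma 2}.
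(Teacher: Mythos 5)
Your proof is correct and follows essentially the same argument as the paper: identify the quadratic variation of $P_j(X)$, use the support condition on $g$ together with Proposition \ref{prop: good delta}(ii) to freely insert $\lvert\nabla P_j(X_s)\rvert^2 = 1$, apply the occupation times (local time) formula, and finish with the estimate \eqref{eq: l time lemma 2}. One small slip: in your displayed chain of equalities the factor $\beta/2$ appears one step too early (it should only enter when replacing $\lvert\nabla P_j(X_s)\rvert^2\,ds$ by $\tfrac{\beta}{2}\,d\langle Z\rangle_s$), but this is a typo and the final bound is unaffected.
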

\begin{proof}
Let us denote the quadratic variation of $P_j(X)$ by $\langle P_j(X)\rangle_s$. Then since $supp(g)\subset (-\delta,\delta)$, and $P_j(x)\in [-\delta, \delta]$ implies $x\in \Sigma_j^\delta$, one has
\begin{equation}
\int^t_0 g(P_j(X_s))d\langle P_j(X)\rangle_s = \frac{2}{\beta}\int^t_0 g(P_j(X_s)) \lvert \nabla P_j(X_s)\rvert^2 ds = \frac{2}{\beta}\int^t_0 g(P_j(X_s)) \lvert \nabla P_j(X_s)\rvert^2 1_{X_s \in \Sigma_j^\delta}ds,
\end{equation}
so that since $\lvert \nabla \rho_j(x)\rvert = \lvert \nabla P_j(x)\rvert=1$ for every $x\in \Sigma_j^\delta$ by Proposition \ref{prop: good delta}, one has 
\begin{equation}
\int^t_0 g(P_j(X_s))  ds =\frac{\beta}{2}\int^t_0 g(P_j(X_s))d\langle P_j(X)\rangle_s .
\end{equation}
Therefore, by the classical local time identity, i.e. Theorem 7.1 iii) page 218 in \cite{karatzas1991brownian}, one obtains
\begin{equation}\label{eq: local time t}
E^x \int^t_0 g(P_j(X_s))ds=\frac{\beta}{2}  \int^t_0 E^x g(P_j(X_s))d\langle P_j(X)\rangle_s = \frac{\beta}{2}  \int^\delta_{-\delta} g(a)E^x\mathcal{L}^{a,j}_t da,
\end{equation}
so that
\begin{align}\label{eq: local time t 2}
E^x\int^{t+1}_t g(P_i(X_s))ds&=E^x\int^{t+1}_0 g(P_i(X_s))ds-E^x\int^t_0 g(P_i(X_s))ds\nonumber \\
&=\frac{\beta}{2}  \int^\delta_{-\delta} g(a)E^x(\mathcal{L}^{a,j}_{t+1}-\mathcal{L}^{a,j}_t) da,
\end{align}
and therefore the result follows from Lemma \ref{lemma: local time dif}. 
\end{proof}
\begin{lemma}\label{lemma: cross bdd p=1 1}
Let \Cref{assmp: piecewise Lip} and \Cref{assmp: strong mon} hold. Let $\gamma_0 \in (0, \frac{\mu}{2L^2})$. Then there exists $c>0$ such that for every $t\geq0$, $\gamma \in (0,\gamma_0)$ and $x \in \mathbb{R}^d \setminus \cup_{j=1}^{n_2}\Sigma_j$ one has
\begin{equation}
E^x \int^{t+1}_t  1_{A_u}du  \leq c (1+\lvert x \rvert^3) \gamma^{1/2} , \nonumber
\end{equation}
where 
\begin{equation}\label{eq: A def}
A_s:=\{ \text{There does not exist} \; i\in \{1,2,...,m\} \;\text{such that}\; X_s, X_{\kappa_\gamma(s)}\in\Phi_i \}. 
\end{equation}
\end{lemma}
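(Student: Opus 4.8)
\textit{Plan of proof.} The plan is to reduce the event $A_s$ to a statement about the one--step displacement $X_s-X_{\kappa_\gamma(s)}$, then to localise near $\cup_j\Sigma_j$: the piece where $X_s$ is within $O(\gamma^{1/2})$ of $\cup_j\Sigma_j$ is controlled by the occupation--time bound of Lemma \ref{lem: time dep local time}, and the piece where $A_s$ holds but $X_s$ lies further from $\cup_j\Sigma_j$ --- which forces a large increment --- is controlled by Lemma \ref{lemma: incr bound} after conditioning on $\mathcal F_{\kappa_\gamma(s)}$ via Lemma \ref{lemma: Markov property}. The starting observation is geometric: since the $\Phi_i$ are disjoint open sets with $\cup_i\overline{\Phi}_i=\mathbb R^d$ and $\cup_i\partial\Phi_i\subset\cup_j\Sigma_j$, the open ball about $X_{\kappa_\gamma(s)}$ of radius $\operatorname{dist}(X_{\kappa_\gamma(s)},\cup_j\Sigma_j)$ meets no $\Sigma_j$, hence is connected and lies in a single $\Phi_i$; the same holds for the ball about $X_s$. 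Therefore on $A_s$ one has $|X_s-X_{\kappa_\gamma(s)}|\ge\max\{\operatorname{dist}(X_s,\cup_j\Sigma_j),\operatorname{dist}(X_{\kappa_\gamma(s)},\cup_j\Sigma_j)\}$. Using the bound by $\operatorname{dist}(X_s,\cup_j\Sigma_j)$ and splitting this distance dyadically about $\gamma^{1/2}$,
\[
1_{A_s}\ \le\ 1_{X_s\in\cup_j\Sigma_j^{\gamma^{1/2}}}\ +\ \sum_{k\ge 0}1_{X_s\in\cup_j\Sigma_j^{2^{k+1}\gamma^{1/2}}}\,1_{|X_s-X_{\kappa_\gamma(s)}|\ge 2^{k}\gamma^{1/2}} ,
\]
since the $k$--th summand dominates $1_{A_s}1_{\operatorname{dist}(X_s,\cup_j\Sigma_j)\in[2^k\gamma^{1/2},2^{k+1}\gamma^{1/2})}$.

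For the leading term I would take $\gamma$ small enough that $\Sigma_j^{\gamma^{1/2}}\subset\Sigma_j^{\delta}$, so that $P_j=\rho_j$ there and $1_{X_s\in\Sigma_j^{\gamma^{1/2}}}=1_{|P_j(X_s)|<\gamma^{1/2}}$; applying Lemma \ref{lem: time dep local time} with $g=1_{(-\gamma^{1/2},\gamma^{1/2})}$ to each $j$ then gives $E^x\int_t^{t+1}1_{X_s\in\cup_j\Sigma_j^{\gamma^{1/2}}}\,ds\le\sum_j c(1+|x|)\,2\gamma^{1/2}$, which is already of the asserted order.

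The dyadic sum is the heart of the matter. Each summand is dominated simultaneously by (i) $1_{X_s\in\cup_j\Sigma_j^{2^{k+1}\gamma^{1/2}}}$, whose $ds$--integral has $E^x$--expectation $\le c(1+|x|)2^{k+1}\gamma^{1/2}$ by Lemma \ref{lem: time dep local time} (with $g$ a box indicator, legitimate once $2^{k+1}\gamma^{1/2}<\delta$), and by (ii) $1_{|X_s-X_{\kappa_\gamma(s)}|\ge 2^{k}\gamma^{1/2}}$, whose $ds$--integral has $E^x$--expectation $\le c(1+|x|^p)2^{-kp}$, because $\int_t^{t+1}E^x|X_s-X_{\kappa_\gamma(s)}|^p\,ds\le c(1+|x|^p)\int_t^{t+1}(s-\kappa_\gamma(s))^{p/2}\,ds\le c(1+|x|^p)\gamma^{p/2}$ by Lemma \ref{lemma: incr bound}, and Markov's inequality then gives the claim. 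For the finitely many $k$ with $2^{k+1}\gamma^{1/2}\ge\delta$ one uses (ii) alone, contributing at most $c(1+|x|^p)(\gamma^{1/2}/\delta)^p$, which is negligible. For the remaining $k$ one uses (i) for small $k$ and (ii) for large $k$; to push the combined estimate all the way to order $\gamma^{1/2}$ (rather than $\gamma^{1/2-}$) I would sharpen (ii) by conditioning on $\mathcal F_{\kappa_\gamma(s)}$: when $X_{\kappa_\gamma(s)}$ is already within $O(2^{k}\gamma^{1/2})$ of $\cup_j\Sigma_j$ one keeps (i), while when $X_{\kappa_\gamma(s)}$ is further away the event $\{X_s\in\cup_j\Sigma_j^{2^{k+1}\gamma^{1/2}}\}$ itself forces the one--step increment to point toward $\cup_j\Sigma_j$ and overshoot by $\gtrsim 2^{k}\gamma^{1/2}$, so that the conditional crossing probability acquires the extra smallness needed for $\sum_{k\ge0}$ of the minimum of the two bounds to sum to $c(1+|x|^3)\gamma^{1/2}$. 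The factor $1+|x|^3$ arises because the conditional form of Lemma \ref{lemma: incr bound} is needed with moment order $\ge 3$, the resulting $|X_{\kappa_\gamma(s)}|$--powers being absorbed by Lemma \ref{lemma: moment bounds for scheme} and, on $\cup_j\Sigma_j^{\delta}\subset B_R$, treated as constants.

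The step I expect to be the genuine obstacle is precisely this last sharpening. A naive decoupling of ``$X_s$ near $\cup_j\Sigma_j$'' from ``the one--step increment is large'' (e.g.\ by Cauchy--Schwarz) costs a square root and only yields the exponent $1/4$ --- exactly the rate of Theorem \ref{th: main theorem 2} --- so the argument must use the joint conditional law of $(X_s,X_{\kappa_\gamma(s)})$ given $\mathcal F_{\kappa_\gamma(s)}$, balancing the $O(\text{width})$ occupation bound against a one--step crossing probability that is truly small once the starting point is $\gg\gamma^{1/2}$ from $\cup_j\Sigma_j$, and summing these contributions dyadically; getting the geometry of ``overshooting into a thin tubular neighbourhood of a compact hypersurface'' into that crossing estimate cleanly, uniformly in $t$, $\gamma$ and the base point, is where the real work lies.
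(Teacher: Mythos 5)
Your geometric reduction is sound, and the dyadic programme is a natural first instinct, but you have correctly diagnosed exactly where it breaks: the naive ``occupation width $\times$ Markov tail'' bookkeeping closes only at rate $\gamma^{1/2-}$, and the conditional sharpening you sketch in the final paragraph (using the law of $X_s$ given $\mathcal F_{\kappa_\gamma(s)}$ to show that landing in a thin tube starting from far away is geometrically rare) is genuinely hard to carry out cleanly for a curved compact tube, and you never do carry it out. This is a real gap, not a technicality.

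The paper avoids the entire dyadic decomposition by a different, sharper mechanism. Instead of using the backward displacement $X_s-X_{\kappa_\gamma(s)}$ (which is \emph{not} independent of $X_s$, so no decoupling helps you), it shows $A_s\subset\cup_j K^j_{s-\gamma}$ where $K^j_u:=\{\sup_{v\in[u,u+\gamma]}|X_v-X_u|\ge \operatorname{dist}(X_u,\Sigma_j)\}$, and works with the \emph{forward} increment from time $u$. It then introduces a stopped process $\bar X$ (stopping when the one-step increment exceeds $1$), which guarantees that on $\{\bar X_u\in\Sigma_j^\delta\}$ the grid points $\bar X_{\kappa_\gamma(u)}$, $\bar X_{\underline\kappa_\gamma(u)}$ lie in $B_{R+2}$, so that the drift integrand is uniformly bounded there. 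On this event the forward increment is therefore dominated by $h_u:=c_1(\gamma+\sup_{v\in[u,u+\gamma]}|W_v-W_u|)$, which is \emph{independent of $X_u$} (it depends only on the future Brownian path) and has $E h_u=O(\gamma^{1/2})$. Conditioning on $X_u$ then gives
\begin{equation}
E^x\bigl[1_{\bar K^j_u}1_{\bar X_u\in\Sigma_j^\delta}\bigr]\le E^x\bigl[g(\rho_j(X_u))\bigr],\qquad g(a):=P(|a|\le h_0)\,1_{|a|\le\delta},
\end{equation}
and the occupation-time bound of Lemma \ref{lem: time dep local time} yields
\begin{equation}
E^x\int_t^{t+1}g(\rho_j(X_u))\,du\le c(1+|x|)\int_{-\delta}^{\delta}P(|a|\le h_0)\,da\le c(1+|x|)\cdot 2\,E h_0\le c(1+|x|)\gamma^{1/2},
\end{equation}
using the layer-cake identity $\int_0^\infty P(a\le h_0)\,da=Eh_0$. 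In other words, the occupation formula is applied not to a fixed box indicator (as in your leading term) but to $g(a)=P(|a|\le h_0)$; the integral of $g$ \emph{is} the expected increment, which is exactly $O(\gamma^{1/2})$. This eliminates the dyadic sum and the loss of exponent in one step. The stopping time is then removed by a Markov estimate at the third moment (which is where the $1+|x|^3$ in the statement actually comes from, not from the moment you suggest). So the two essential ideas your proposal is missing are (i) the stopping/localisation that makes the forward increment bound deterministic on $\Sigma_j^\delta$, and (ii) the substitution of $g=P(|\cdot|\le h_0)$, an independent-tail function, into the occupation formula; these are what let the argument reach $\gamma^{1/2}$ without any dyadic analysis.
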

\begin{proof}
\textit{Step i)}
Let us fix a time $t\geq0$ and $j\in \{1,2,...,n_2\}$. Then we may define a stopped version of $X_s$ which is equal to $X_s$ unless there happens to be a $u\in[t,t+1]$ for which $\lvert X_u-X_{\kappa_\gamma(u)} \rvert \geq 1$, at the first instance of which it stops. Specifically
\begin{equation}\label{eq: stopping defn}
\tau:=\inf\{ u\in [t,t+1+\gamma]  \; \vert \; \lvert X_u-X_{\kappa_\gamma(u)} \rvert \geq 1 \} \cup\{\infty\},\;\;\; \bar{X}_s:=X_{s \wedge \tau}, \;\;\; s\geq 0.
\end{equation}
Let us define the stopped version of $K^j_s$ from the next step as
\begin{equation}
    \bar{K}^j_s:= \biggr \{ \sup_{u \in [s,s+\gamma]} \lvert \bar{X}_u-\bar{X}_s \rvert \geq dist(\bar{X}_s, \Sigma_j) \biggr \}, \;\;\; s\geq 0,
\end{equation}
and (recalling $\tau\geq t$ by definition) let us prove
\begin{equation}\label{eq: stopped int}
E^x \int^{(t+1)\wedge \tau}_{t}  1_{\bar{K}^j_s}ds  \leq c  (1+\lvert x \rvert) \gamma^{1/2} .
\end{equation}
To this end we may split as
\begin{align}\label{eq: splitting}
\int^{(t+1)\wedge \tau}_{t} 1_{\bar{K}^j_s}ds  &= \int^{(t+1)\wedge \tau}_{t} 1_{\bar{K}^j_s}\cdot 1_{\bar{X}_s \in \Sigma_j^\delta} ds+ \int^{(t+1)\wedge \tau}_{t} 1_{\bar{K}^j_s}\cdot 1_{\bar{X}_s \not \in \Sigma_j^\delta} ds\nonumber \\
&=: v_1+v_2.
\end{align}
Now let us show that there exists $c_1>0$ such that for every $\omega \in \{ \bar{X}_s \in \Sigma_j^\delta \}$ and $s \in [t,t+1]$ one has
\begin{equation}\label{eq: stopped incr sup}
\sup_{u\in [s, s+\gamma]} \lvert \bar{X}_s(\omega)-\bar{X}_u(\omega)\rvert \leq c_1\biggr(\gamma+\sup_{u\in [s, s+\gamma]} \lvert W_s(\omega)-W_u(\omega)\rvert \biggr).
\end{equation} 
Let $s \in [t,t+1]$ and $l \in [s,s+\gamma]$. Then 
\begin{align}
\lvert \bar{X}_s-\bar{X}_l\rvert &\leq \biggr \lvert \int^{l\wedge \tau}_{s\wedge \tau}  \nabla U(\bar{X}_{\kappa_\gamma(v)})  dv+\sqrt{\frac{2}{\beta}}( W_{s\wedge \tau} - W_{l\wedge \tau}) \biggr \rvert\nonumber \\
& \leq \int^{s+\gamma}_{s} \lvert \nabla U(\bar{X}_{\kappa_\gamma(v)}) \rvert dv+\sqrt{\frac{2}{\beta}} \sup_{u\in [s,s+\gamma]}\lvert  W_s - W_u  \rvert,
\end{align}
so that since the bound is independent of $l$ one has
\begin{align}
\sup_{u\in [s,s+\gamma]} \lvert \bar{X}_s-\bar{X}_u\rvert  \leq \int^{s+\gamma}_{s} \lvert \nabla U(\bar{X}_{\kappa_\gamma(v)}) \rvert dv+\sqrt{\frac{2}{\beta}} \sup_{u\in [s,s+\gamma]}\lvert  W_s - W_u  \rvert.
\end{align}
Now note for $v \in [s,s+\gamma]$ one has $\kappa_\gamma(v)\in \{ \kappa_\gamma(s), \underline{\kappa}_\gamma(s) \}$, where $\underline{\kappa}_\gamma$ is the forward projection onto the discretisation grid given in Section \ref{subsec: setup}. Let us show we can bound the integrand above uniformly for $\omega \in \{ \bar{X}_s \in \Sigma_j^\delta \}$.
\begin{figure}[H]\centering
\includegraphics[width=8cm, height=0.9cm]{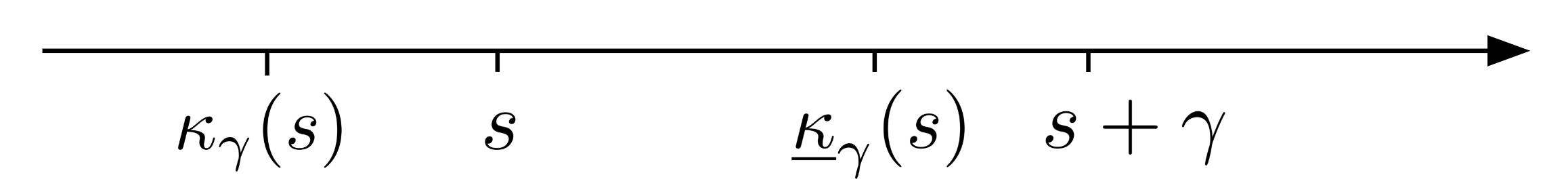}
\caption{Recall that the coefficients of \eqref{eq: cont interp} change at $\kappa_\gamma(s)$ and $\underline{\kappa}_\gamma(s)$}\label{visina8}
\end{figure}
First note that since $\Sigma_j^\delta \subset B_R$ for $R>0$ as in Section \ref{subsec: assmp}, one has that $\omega \in \{ \bar{X}_s \in \Sigma_j^\delta \}$ implies that $\lvert \bar{X}_s(\omega)\rvert \leq R$. Therefore, by the definition of the stopped process in \eqref{eq: stopping defn}, for such an $\omega \in \{ \bar{X}_s \in \Sigma_j^\delta \}$ one has 
\begin{equation}
    \lvert \bar{X}_{\kappa_\gamma(s)}(\omega)\rvert \leq \lvert \bar{X}_s(\omega)\rvert+\lvert \bar{X}_s(\omega)-\bar{X}_{\kappa_\gamma(s)}(\omega)\rvert\leq R+1,
\end{equation}
and
\begin{align}
    \lvert \bar{X}_{\underline{\kappa}_\gamma(s)}(\omega)\rvert &\leq \lvert \bar{X}_s(\omega)\rvert+\lvert \bar{X}_s(\omega)-\bar{X}_{\underline{\kappa}_\gamma(s)}(\omega)\rvert \nonumber \\
    &\leq \lvert \bar{X}_s(\omega)\rvert+\lvert \bar{X}_s(\omega)-\bar{X}_{\kappa_\gamma(s)}(\omega)\rvert+\lvert \bar{X}_{\kappa_\gamma(s)}(\omega)-\bar{X}_{\underline{\kappa}_\gamma(s)}(\omega)\rvert\leq R+2,
\end{align}
where the third term in the final line is bounded by $1$ by continuity, and since for $s\in [t,t+1]$ one has $\underline{\kappa}_\gamma(s)\leq t+1+\gamma$. Then since $\nabla U$ is bounded on compact sets, one has for $\omega \in \{ \bar{X}_s \in \Sigma_j^\delta \}$ that 
\begin{equation}
\int^{s+\gamma}_{s} \lvert \nabla U(\bar{X}_{\kappa_\gamma(v)}(\omega)) \rvert dv \leq c\gamma,
\end{equation}
at which point \eqref{eq: stopped incr sup} follows. Now let us denote (for all $\omega \in \Omega$)
\begin{equation}\label{eq: hs def}
h_s:=c_1\biggr(\gamma+\sup_{u\in [s, s+\gamma]} \lvert W_u-W_s\rvert \biggr).
\end{equation}
so that we have, by the definition of $\rho_j$ given in Section \ref{subsec: delta nbhd}
\begin{equation}
1_{K^j_s} \cdot 1_{\bar{X}_s \in \Sigma_j^\delta} \leq 1_{\{h_s \geq dist(\bar{X}_s, \Sigma_j^\delta)\}} \cdot 1_{\bar{X}_s \in \Sigma_j^\delta} = 1_{\{h_s \geq \lvert \rho_j (\bar{X}_s) \rvert\}} \cdot 1_{\bar{X}_s \in \Sigma_j^\delta}.
\end{equation}
As a result, since $\bar{X}_s=X_s$ for $s\in [0,\tau]$, one may write
\begin{align}
v_1& = \int^{(t+1)\wedge\tau}_{t}  1_{\bar{K}^j_s}\cdot 1_{\bar{X}_s \in \Sigma_j^\delta} ds \leq \int^{(t+1)\wedge\tau}_{t}  1_{\{h_s \geq \lvert \rho_j (\bar{X}_s) \rvert\}} \cdot 1_{\bar{X}_s \in \Sigma_j^\delta} ds \nonumber \\
&= \int^{(t+1)\wedge\tau}_{t} 1_{\{h_s \geq \lvert \rho_j (X_s) \rvert\}} \cdot 1_{X_s \in \Sigma_j^\delta} ds\leq \int^{t+1}_t 1_{\{h_s \geq \lvert \rho_j (X_s) \rvert\}} \cdot 1_{X_s \in \Sigma_j^\delta} ds.
\end{align}
Now recall that for independent random variables $X$ and $Y$ one has that
\begin{equation}
E[f(X,Y) \vert X]=g(X),
\end{equation}
where $g(x):=Ef(x,Y)$. Therefore, since $h_s$ is independent of $X_s$, one may write
\begin{align}
&E^x[1_{\{h_s \geq \lvert \rho_i (X_s) \rvert\}} \cdot 1_{X_s \in \Sigma_j^\delta}]=E^x\biggr[E^x [ 1_{\{h_s \geq \lvert \rho_i (X_s) \rvert\}} \cdot 1_{X_s \in \Sigma_j^\delta} \; \vert \;X_s]\biggr] \nonumber \\
&=E^x\biggr[E^x [ 1_{\{h_s \geq \lvert \rho_i (X_s) \rvert\}} \cdot 1_{\lvert \rho_i(X_s)\rvert\leq \delta} \; \vert \; \rho_i(X_s)] \biggr]=E^x[g(\rho_i(X_s))],
\end{align}
where since $\mathcal{L}(h_s)=\mathcal{L}(h_0)$ for $s\geq 0$, one may define $g:\mathbb{R}\to\mathbb{R}$ as
\begin{equation}
g(x)=P(\lvert x \rvert \leq h_0)1_{\lvert x \rvert \leq \delta} .
\end{equation}
Therefore, by the definition of $h_s$ in \eqref{eq: hs def}, one has $Eh_0=\leq c\gamma^{1/2}$, and by Lemma \ref{lem: time dep local time} one has
\begin{align}\label{eq: v1}
E^xv_1&\leq  E^x\int^{t+1}_t g(\rho_i(X_s)) ds \leq c(1+\lvert x \rvert) \int^\delta_{-\delta} g(s)ds \leq c(1+\lvert x \rvert)\int^\delta_0P(s\leq h_0) ds  \nonumber \\
&\leq c(1+\lvert x \rvert)\int^\infty_0  P(s \leq h_0)ds 
\leq c(1+\lvert x \rvert)Eh_0\leq c(1+\lvert x \rvert)\gamma^{1/2}.
\end{align}
Now for $v_2$ we observe that 
\begin{equation}
K^j_s \cap \{\bar{X}_s \not \in \Sigma_j^\delta \} \subset \{ \sup_{u \in [s,s+\gamma]} \lvert \bar{X}_u-\bar{X}_s \rvert \geq \delta  \},
\end{equation}
so by Markov's inequality and Lemma \ref{lemma: incr bound}
\begin{equation}\label{eq: v2}
E^x v_2\leq \int^{t+1}_t P^x(\sup_{u \in [s,s+\gamma]} \lvert \bar{X}_u-\bar{X}_s \rvert \geq \delta) ds\leq c(1+\lvert x \rvert)\gamma^{1/2},
\end{equation}
at which point substituting \eqref{eq: v1} and \eqref{eq: v2} into \eqref{eq: splitting}, the result in \eqref{eq: stopped int} follows.

\textit{Step ii)} Now we prove the unstopped version of \eqref{eq: stopped int}, that is for $t\geq 0$, $j\in \{1,2,...,n_2\}$ and $K^j_s$ given as
\begin{equation}
    K^j_s:= \biggr \{ \sup_{u \in [s,s+\gamma]} \lvert X_u-X_s \rvert \geq dist(X_s, \Sigma_j) \biggr \}.
\end{equation}
we prove
\begin{equation}\label{eq: unstopped int}
E^x \int^{t+1}_t  1_{K^j_u}du  \leq c  (1+\lvert x \rvert) \gamma^{1/2} .
\end{equation}
In the case where $\tau = \infty$, one has
\begin{equation}
    1_{\tau = \infty}\cdot \int^{t+1}_t  1_{K^j_s}ds \leq \int^{(t+1)\wedge \tau}_{t}  1_{\bar{K}^j_s}ds,
\end{equation}
so one may use \eqref{eq: stopped int} to bound as
\begin{equation}\label{eq: int tau large}
E^x \biggr [ 1_{\tau = \infty}\cdot \int^{t+1}_t  1_{K^j_s}ds  \biggr ]   \leq c(1+\lvert x \rvert)\gamma^{1/2}.
\end{equation}
For the case where $\tau\in [t,t+1]$, one may bound $1_{K^j_u}\leq 1$ so that
\begin{equation}\label{eq: int tau small 1}
E^x \biggr [1_{\tau\in [t,t+1]} \cdot \int^{t+1}_t 1_{K^j_u}du\biggr]    \leq P^x(\tau \in [t,t+1]).
\end{equation}
Now observing that $\tau \in [t,t+1]$ implies that there exists a $u\in [t,t+1]$ such that $\lvert X_u - X_{\kappa_\gamma(u)}\rvert \geq 1$, one has
\begin{equation}
\{ \tau \in [t,t+1]\} \subset  \{ \sup_{u\in [t, \underline{\kappa}_\gamma(t)]}\lvert  X_t -X_u\rvert \geq 1 \} \cup \biggr (\bigcup_{i=0}^{\lfloor 1/\gamma \rfloor} \{ \sup_{u\in [\underline{\kappa}_\gamma(t)+i\gamma, \underline{\kappa}_\gamma(t)+(i+1)\gamma]}\lvert X_{i\gamma}-X_u \rvert \geq 1 \} \biggr), \nonumber
\end{equation}
one may write
\begin{align}
P^x(\tau \in [t,t+1]) &\leq P^x(\sup_{u\in [t, \underline{\kappa}_\gamma(t)]}\lvert  X_t -X_u\rvert \geq 1)+\sum ^{\lfloor 1/\gamma \rfloor}_{i=0} P^x\biggr (\sup_{u\in [\underline{\kappa}_\gamma(t)+i\gamma, \underline{\kappa}_\gamma(t)+(i+1)\gamma]}\lvert X_{i\gamma}-X_u \rvert \geq 1 \biggr)\nonumber \\
&=P^x(\sup_{u\in [t, \underline{\kappa}_\gamma(t)]}\lvert  X_t -X_u\rvert ^3\geq 1)+\sum ^{\lfloor 1/\gamma \rfloor}_{i=0} P^x\biggr (\sup_{u\in [\underline{\kappa}_\gamma(t)+i\gamma, \underline{\kappa}_\gamma(t)+(i+1)\gamma]}\lvert X_{i\gamma}-X_u \rvert^3 \geq 1  \biggr). \nonumber
\end{align}
Then applying Markov's inequality and Lemma \ref{lemma: incr bound}, one obtains that
\begin{equation}
P^x(\tau \in [t,t+1] ) \leq c(\lfloor 1/\gamma \rfloor+1)(1+\lvert x \rvert^3)\gamma^{3/2}\leq c(1+\lvert x \rvert^3)\gamma^{1/2},
\end{equation}
which one may substitute into \eqref{eq: int tau small 1} to obtain
\begin{equation}\label{eq: int tau small 2}
E^x \biggr [1_{\tau \in [t,t+1]} \cdot \int^{t+1}_t  1_{K^j_s}ds\biggr]    \leq c(1+\lvert x \rvert^3)\gamma^{1/2},
\end{equation}
so that summing \eqref{eq: int tau large} and \eqref{eq: int tau small 2}, one has that \eqref{eq: unstopped int} follows.

\textit{Step iii)} Now we prove the result for $A_s$. Suppose $\omega\in (\cup_{j=1}^{n_2} K^j_{s-\gamma})^c$ for $s\geq \gamma$. Then $\sup_{u \in [s-\gamma,s]} \lvert X_u-X_s \rvert < dist(X_s, \Sigma_j)$ for every $j=1,,2,...,n_2$, so since $\cup_{i=1}^{n_1} \partial \Phi_i \subset \cup_{j=1}^{n_2} \Sigma_j$, the process can't have reached the boundary of any $\Phi_j$. Therefore it has stayed in the same $\Phi_i$, or in other words there must exist a $i'$ such that $X_u \in \Phi_{i'}$ for every $u\in [s-\gamma,s]$. Therefore $\omega\in(A_s)^c$ also. Negating this relation one obtains
\begin{equation}
A_s\subset \cup_{j=1}^{n_2}K^j_{s-\gamma}.
\end{equation}
Now let us assumme $\gamma \leq 1$ (if not the result follows trivially). Then we may bound $1_{A_u}$ by $1$ on $[t, t+\gamma]$ to obtain
\begin{equation}
E^x \int^{t+1}_t  1_{A_u}du= \gamma+ E^x \int^{t+1}_{t+\gamma}  1_{A_u}du\leq \gamma+\sum_{j=1}^{n_2} E^x \int^{t+1}_t  1_{K^j_u}du \leq c(1+\lvert x \rvert^3)\gamma^{1/2}
\end{equation}
by \eqref{eq: unstopped int}.
\end{proof}
\begin{lemma}\label{lemma: cross bdd p=1 2}
Let \Cref{assmp: piecewise Lip} and \Cref{assmp: strong mon} hold. Let $\gamma_0 \in (0, \frac{\mu}{2L^2})$. Then there exists $c>0$ such that for every $t\geq0$, $\gamma \in (0,\gamma_0)$ and $x \in \mathbb{R}^d \setminus \cup_{j=1}^{n_2}\Sigma_j$ one has
\begin{equation}
E^x \int^t_0 e^{qs} 1_{A_s}ds  \leq c (1+\lvert x \rvert^3) \gamma^{1/2} e^{q t}, \nonumber
\end{equation}
where $A_s$ is given in Lemma \ref{lemma: cross bdd p=1 1}.
\end{lemma}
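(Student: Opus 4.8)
The plan is to tile the interval $[0,t]$ by unit subintervals, apply Lemma~\ref{lemma: cross bdd p=1 1} on each one, and then control the total by summing a geometric series in the exponential weight. The feature that makes this work cleanly is that the bound furnished by Lemma~\ref{lemma: cross bdd p=1 1} for $E^x\int_\tau^{\tau+1}1_{A_s}\,ds$ is uniform in the left endpoint $\tau\geq 0$, depending on $\tau$ not at all and on the initial condition only through the factor $1+\lvert x\rvert^3$. Consequently no further moment estimates or Markov-property arguments are needed beyond what has already been set up in Sections~\ref{sec: prelim bounds}--\ref{sec: crossing bounds}.

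Concretely, first I would observe that since the integrand $e^{qs}1_{A_s}$ is nonnegative it suffices to bound $E^x\int_0^{\lceil t\rceil}e^{qs}1_{A_s}\,ds$, and then split this as $\sum_{k=0}^{\lceil t\rceil-1}E^x\int_k^{k+1}e^{qs}1_{A_s}\,ds$. On each block $[k,k+1]$ one has $e^{qs}\leq e^{q(k+1)}$, so the $k$-th summand is at most $e^{q(k+1)}E^x\int_k^{k+1}1_{A_s}\,ds$. Applying Lemma~\ref{lemma: cross bdd p=1 1} with its free parameter set equal to the integer $k$ bounds the remaining integral by $c(1+\lvert x\rvert^3)\gamma^{1/2}$ with $c$ independent of $k$, $\gamma$ and $x$, giving $E^x\int_0^{\lceil t\rceil}e^{qs}1_{A_s}\,ds\leq c(1+\lvert x\rvert^3)\gamma^{1/2}\sum_{k=0}^{\lceil t\rceil-1}e^{q(k+1)}$.

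It then only remains to sum the geometric series: $\sum_{k=0}^{\lceil t\rceil-1}e^{q(k+1)}=e^q\frac{e^{q\lceil t\rceil}-1}{e^q-1}\leq \frac{e^{2q}}{e^q-1}e^{qt}$, using $\lceil t\rceil\leq t+1$. Folding the resulting $q$-dependent factor into the generic constant $c$ yields $E^x\int_0^t e^{qs}1_{A_s}\,ds\leq c(1+\lvert x\rvert^3)\gamma^{1/2}e^{qt}$, as claimed. There is no genuine obstacle here; the only point worth stressing is precisely the uniformity in the left endpoint used in the second step, which is exactly what Lemma~\ref{lemma: cross bdd p=1 1} supplies and, in turn, the reason the exponential weighting is introduced at all.
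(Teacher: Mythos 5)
Your proposal is correct and follows essentially the same route as the paper: tile $[0,t]$ by unit intervals, bound $e^{qs}$ by its supremum on each block, invoke Lemma~\ref{lemma: cross bdd p=1 1} (uniformly in the left endpoint), and then sum the resulting geometric-type series to get $ce^{qt}$. The only cosmetic differences are that the paper splits at $\lfloor t\rfloor$ and treats the final partial block separately (comparing the sum to an integral), whereas you round up to $\lceil t\rceil$ and sum the geometric series directly; these are interchangeable.
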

\begin{proof}
Let us split $[0,t]$ into unit intervals up to $\lfloor t \rfloor$, and bound the exponential by its supremum on each interval. Then one obtains
\begin{equation}
\int^t_0 e^{qs} 1_{A_s}ds = \sum_{i=1}^{\lfloor t \rfloor }\int^i_{i-1} e^{qs} 1_{A_s} ds+\int^t_{\lfloor t \rfloor}e^{qs}1_{A_s} ds \leq \sum_{i=1}^{\lfloor t \rfloor }e^{qi} \int^i_{i-1}  1_{A_s} ds+e^{qt}\int^t_{\lfloor t \rfloor}1_{A_s} ds.
\end{equation}
Then by Lemma \ref{lemma: cross bdd p=1 1} one has 
\begin{align}
E^x \int^t_0 e^{qu} 1_{A_u}du &\leq \sum_{i=1}^{\lfloor t \rfloor }e^{qi} E^x\int^i_{i-1}  1_{A_s} ds+e^{qt}E^x\int^{\lfloor t \rfloor+1}_{\lfloor t \rfloor}1_{A_s} ds \nonumber \\
&\leq c(1+\lvert x \rvert^3)\gamma^{1/2} \biggr (\sum_{i=1}^{\lfloor t \rfloor} e^{qi}+e^{qt}\biggr) . \nonumber
\end{align}
Now observe that since $e^{qi}\leq e^{qu}$ for $u\in [i, i+1]$, one has
\begin{equation}
\sum_{i=1}^{\lfloor t \rfloor} e^{qi} \leq \sum_{i=1}^{\lfloor t \rfloor-1} \int^{i+1}_i e^{qu} du +e^{q\lfloor t \rfloor}\leq \int^{\lfloor t \rfloor } _1 e^{qu} du +e^{q\lfloor t \rfloor}\leq ce^{qt},
\end{equation}
at which point the result follows.
\end{proof}

\begin{prop}\label{prop: exponential crossing}
Let \Cref{assmp: piecewise Lip} and \Cref{assmp: strong mon} hold. Let $\gamma_0 \in (0, \frac{\mu}{2L^2})$ and $p\geq 0$. Then there exists $c>0$ such that for every $t\geq0$, $\gamma \in (0,\gamma_0)$ and $x \in \mathbb{R}^d \setminus \cup_{j=1}^{n_2}\Sigma_j$ one has
\begin{equation}
E\biggr [\int^t_0 e^{qs} 1_{A_s}ds \biggr ]^p \leq c  \gamma^{p/2} e^{pq t}, \nonumber
\end{equation}
where $A_s$ is given in Lemma \ref{lemma: cross bdd p=1 1}.
\end{prop}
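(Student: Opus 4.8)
\textit{Proof proposal.} The plan is to prove, by induction on $p\in\mathbb N$, the slightly stronger statement that there is $c>0$ such that for all $t\ge 0$, $\gamma\in(0,\gamma_0)$ and $x\in\mathbb R^d\setminus\cup_j\Sigma_j$,
\[
E^x\big[(J^t_0)^p\big]\le c(1+|x|^3)\gamma^{p/2}e^{pqt},\qquad\text{where } J^t_s:=\int_s^t e^{qr}1_{A_r}\,dr
\]
and $q>0$ is the fixed constant of Lemma \ref{lemma: cross bdd p=1 2}. The case $p=0$ is trivial and $0<p<1$ follows from Jensen's inequality; once the statement is known for all $p\in\mathbb N$, an arbitrary $p\ge1$ is obtained by applying Jensen with the concave map $x\mapsto x^{p/\lceil p\rceil}$, and finally the claimed bound (under $E$, without the $|x|$-factor) follows by integrating over the law of $\xi$, since $E|\xi|^3<\infty$ by \Cref{assmp: initial condition}. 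The base case $p=1$ is exactly Lemma \ref{lemma: cross bdd p=1 2}.

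For the inductive step, suppose the bound holds for $p-1\ge1$. Since $u\mapsto J^t_u$ is Lipschitz with $\tfrac{d}{du}J^t_u=-e^{qu}1_{A_u}$, one has $(J^t_0)^p=p\int_0^t(J^t_u)^{p-1}e^{qu}1_{A_u}\,du$. Split $J^t_u=J^{\underline{\kappa}_\gamma(u)}_u+J^t_{\underline{\kappa}_\gamma(u)}$, where the first term is bounded deterministically, $J^{\underline{\kappa}_\gamma(u)}_u\le\gamma e^{q(u+\gamma)}\le c\gamma e^{qu}$. Using $(a+b)^{p-1}\le c_p(a^{p-1}+b^{p-1})$, the contribution of the first term to $(J^t_0)^p$ is at most $c\gamma^{p-1}\int_0^t e^{pqu}1_{A_u}\,du$, and taking $E^x$ and applying Lemma \ref{lemma: cross bdd p=1 2} with $pq$ in place of $q$ bounds this by $c(1+|x|^3)\gamma^{p-1/2}e^{pqt}\le c(1+|x|^3)\gamma^{p/2}e^{pqt}$. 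For the second term, $1_{A_u}$ is $\mathcal F_{\underline{\kappa}_\gamma(u)}$-measurable, so conditioning on $\mathcal F_{\underline{\kappa}_\gamma(u)}$ and using the path-functional form of the Markov property (the argument of Lemma \ref{lemma: Markov property} applies to measurable functionals of the post-$n\gamma$ path) together with the induction hypothesis gives
\[
E^x\big[(J^t_{\underline{\kappa}_\gamma(u)})^{p-1}\,\big|\,\mathcal F_{\underline{\kappa}_\gamma(u)}\big]=e^{(p-1)q\underline{\kappa}_\gamma(u)}\,E^{X_{\underline{\kappa}_\gamma(u)}}\big[(J^{\,t-\underline{\kappa}_\gamma(u)}_0)^{p-1}\big]\le c\big(1+|X_{\underline{\kappa}_\gamma(u)}|^3\big)\gamma^{(p-1)/2}e^{(p-1)qt},
\]
so that the contribution of the second term to $E^x[(J^t_0)^p]$ is at most $c\gamma^{(p-1)/2}e^{(p-1)qt}\int_0^t e^{qu}\,E^x\big[(1+|X_{\underline{\kappa}_\gamma(u)}|^3)1_{A_u}\big]\,du$.

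The crux, and the only genuinely new estimate needed, is therefore a weighted version of Lemma \ref{lemma: cross bdd p=1 2}:
\[
E^x\int_0^t e^{qu}\big(1+|X_{\underline{\kappa}_\gamma(u)}|^3\big)1_{A_u}\,du\le c(1+|x|^3)\gamma^{1/2}e^{qt}.
\]
To prove it, observe that on $A_u$ the continuous path $(X_v)_{v\in[\kappa_\gamma(u),u]}$ passes from one region $\Phi_i$ to another, hence (by continuity and $\cup_i\partial\Phi_i\subset\cup_j\Sigma_j\subset B_R$) meets $B_R$ at some $v\in[\kappa_\gamma(u),\underline{\kappa}_\gamma(u)]$, giving $|X_{\underline{\kappa}_\gamma(u)}|\le R+D_u$ on $A_u$, where $D_u:=\sup_{v,w\in[\kappa_\gamma(u),\underline{\kappa}_\gamma(u)]}|X_v-X_w|$. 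Thus $(1+|X_{\underline{\kappa}_\gamma(u)}|^3)1_{A_u}\le c(1+R^3)1_{A_u}+cD_u^3$; the first term is controlled by Lemma \ref{lemma: cross bdd p=1 2}, and the second, after dropping $1_{A_u}\le1$, by $E^xD_u^3\le c\gamma^{3/2}(1+|x|^3)\le c\gamma^{1/2}(1+|x|^3)$ from Lemma \ref{lemma: incr bound} (with $l=\gamma$), after which $\int_0^t e^{qu}\,du\le ce^{qt}$ concludes. Feeding this back into the inductive step bounds the second-term contribution by $c(1+|x|^3)\gamma^{p/2}e^{pqt}$, and summing the two contributions closes the induction with the exponent $3$ on $|x|$ reproduced. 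I expect the bookkeeping in this weighted estimate — in particular verifying the localisation of $X_{\underline{\kappa}_\gamma(u)}$ near $B_R$ on the event $A_u$ together with the correct increment exponents — to be the only delicate point; the remaining ingredients are the elementary recursion above and repeated invocation of Lemmas \ref{lemma: incr bound}, \ref{lemma: cross bdd p=1 2} and \ref{lemma: Markov property}.
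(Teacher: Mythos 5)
Your proposal is correct, and while the core mechanism is the same as the paper's, the bookkeeping is organised differently in a way worth noting. Both arguments hinge on (i) the power-reduction identity $(J^t_0)^p = p\int_0^t(J^t_u)^{p-1}e^{qu}1_{A_u}\,du$ — the paper writes this equivalently via the symmetrisation $\bigl(\int\bigr)^p = p!\int_{s_1\le\cdots\le s_p}\cdots$ — and (ii) conditioning the tail factor $J^t_{\underline{\kappa}_\gamma(u)}$ on $\mathcal F_{\underline{\kappa}_\gamma(u)}$ and invoking a path-functional version of Lemma \ref{lemma: Markov property} (the paper also needs this slight extension, at \eqref{eq: cond exp bdd}). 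The genuine divergence is in how the $(1+|X_{\underline{\kappa}_\gamma(u)}|^3)$ factor produced by the inductive hypothesis is neutralised. The paper runs a multi-case analysis on the events $\{|X_{s_{p-1}}-X_{\kappa_\gamma(s_{p-1})}|>1\}$ and $\{|X_{\underline{\kappa}_\gamma(s_{p-1})}-X_{s_{p-1}}|>1\}$, killing the bad cases ($T_1$, $T_{2,1}$) by Markov's inequality with the $p$th power, and on the good case gets $|X_{\underline{\kappa}_\gamma(s_{p-1})}|\le R+2$ deterministically, so the cubic factor becomes a constant. You instead prove the single ``weighted crossing bound'' $E^x\int_0^t e^{qu}(1+|X_{\underline{\kappa}_\gamma(u)}|^3)1_{A_u}\,du\le c(1+|x|^3)\gamma^{1/2}e^{qt}$, obtained from the pointwise estimate $|X_{\underline{\kappa}_\gamma(u)}|\le R+D_u$ valid on $A_u$ (using, as the paper also notes, that only $\Phi_1$ is unbounded so one of $X_u,X_{\kappa_\gamma(u)}$ lies in $B_R$), together with $E^xD_u^3\le c\gamma^{3/2}(1+|x|^3)$ from Lemma \ref{lemma: incr bound}. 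This avoids the paper's case explosion and keeps the induction in the compact form $m_p\lesssim\gamma^{p-1/2}e^{pqt}+\gamma^{(p-1)/2}e^{(p-1)qt}\cdot\gamma^{1/2}e^{qt}$. What you lose is that your argument needs $\gamma_0\le 1$ (or an explicit constant) to conclude $\gamma^{3/2}\le c\gamma^{1/2}$, and the treatment of non-integer $p\ge1$ via $x\mapsto x^{p/\lceil p\rceil}$ is slightly more intricate than the paper's appeal to Jensen. Overall, your route is a valid and arguably cleaner alternative; the delicate point you correctly flag — the localisation of $X_{\underline{\kappa}_\gamma(u)}$ on $A_u$ — is sound.
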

\begin{proof}
We prove for $p\in \mathbb{N}$, at which point all other values of $p>0$ follow by Jensen's inequality and taking appropriate roots. Furthermore, we prove by induction, where the base case follows from Lemma \ref{lemma: cross bdd p=1 1}, the initial condition identity \eqref{eq: initial conditioin Fubini} and the integrability of the initial condition in Assumption \ref{assmp: initial condition}. Let us assume the Theorem holds for $p-1 \in \mathbb{N}$, and show that it therefore holds for $p \in \mathbb{N}$. Firstly, since the following integral is symmetrical over $s_1,...,s_p$, it is equal to the same integral multiplied by $p!$ with the condition that $s_1\leq s_2 \leq... \leq s_p$ (since the variables $s_p$ can be permuted in $p!$ ways). Using this trick one has
\begin{align} \label{eq: rewrite}
& \biggr (\int^t_0 e^{qs}  1_{A_s} ds \biggr)^p = \int^t_0... \int^t_0 e^{q(s_1+s_2+...+s_p)}1_{A_{s_1}} \cdot  1_{A_{s_2}} \cdot ... \cdot 1_{A_{s_p}} ds_1 ds_2 ... ds_p \nonumber \\
 & = p! \cdot \int_{0\leq s_1\leq ...\leq s_p \leq t} e^{q(s_1+s_2+...+s_p)} 1_{A_{s_1}} \cdot  1_{A_{s_2}} \cdot ... \cdot 1_{A_{s_p}} ds_1 ds_2 ... ds_p \nonumber \\
 & = p! \cdot  \int_{0\leq s_1\leq ...\leq s_{p-1} \leq t} \biggr( e^{q(s_1+s_2+...+s_{p-1})}1_{A_{s_1}} \cdot  1_{A_{s_2}} \cdot ... \cdot 1_{A_{s_{p-1}}}  \cdot \int^t_{s_{p-1}} e^{qs_p} 1_{A_{s_p}}ds_p \biggr) ds_1 ds_2 ... ds_{p-1}.
 \end{align}
 Then if we show
 \begin{equation}\label{eq: induct bound}
     E\biggr [1_{A_{s_1}} \cdot  1_{A_{s_2}} \cdot ... \cdot 1_{A_{s_{p-1}}}  \cdot \int^t_{s_{p-1}} e^{qs_p} 1_{A_{s_p}}ds_p \biggr ]\leq cE1_{A_{s_1}} \cdot  1_{A_{s_2}} \cdot ... \cdot 1_{A_{s_{p-1}}} \gamma^{1/2}e^{qt}+c\gamma^{p/2}e^{qt},
 \end{equation}
 we can apply expectation to \eqref{eq: rewrite}, insert \eqref{eq: induct bound} and apply the inductive assumption to prove the result. First suppose $s_{p-1}=\kappa_\gamma(s_{p-1})$. By Proposition \ref{prop: well posedness} one has $P(X_{s_{p-1}} \in \cup_{i=1}^{n_2} \Sigma_i)=0$, so $P(X_{s_{p-1}} \in \cup_{i=1}^{n_1} \Phi_i)=1$. Therefore by definition one has $1_{A_{s_{p-1}}}=0$ almost surely, so \eqref{eq: induct bound} holds in this case. Now suppose $s_{p-1}>\kappa_\gamma(s_{p-1})$. Similarly as in the proof of Lemma \ref{lemma: cross bdd p=1 1}, now let us split into the case where $\lvert X_{s_{p-1}}- X_{\kappa_\gamma(s_{p-1})}\rvert>1$ and $\lvert X_{s_{p-1}}- X_{\kappa_\gamma(s_{p-1})}\rvert\leq 1$. For the former one uses the trivial bound $1_{A_{s_i}}\leq 1$ to bound
 \begin{align}
     T_1&:=1_{A_{s_1}} \cdot  1_{A_{s_2}} \cdot ... \cdot 1_{A_{s_{p-1}}}   \cdot 1_{\{\lvert X_{s_{p-1}}- X_{\kappa_\gamma(s_{p-1})}\rvert>1 \}}\cdot \int^t_{s_{p-1}} e^{qs_p} 1_{A_{s_p}}ds_p \nonumber \\
     &\leq   1_{ \{ \lvert X_{s_{p-1}}- X_{\kappa_\gamma(s_{p-1})}\rvert>1 \}} \cdot \int^t_{s_{p-1}} e^{qs_p} ds_p, \nonumber
 \end{align}
 so that taking expectation, one may evaluate the integral to obtain
  \begin{align}\label{eq: T1}
     ET_1\leq ce^{qt}P(\lvert X_{s_{p-1}}- X_{\kappa_\gamma(s_{p-1})}\rvert^p>1)\leq c\gamma^{p/2}e^{qt}, 
 \end{align}
 by Markov's inequality and Lemma \ref{lemma: incr bound}. Now recall $R>0$ from Section \ref{subsec: assmp} and observe that for $A_{s_{p-1}}$ to hold one requires that one of either $X_{s_{p-1}}$ or $X_{\kappa_\gamma(s_{p-1})}$ belongs to $B_R$, as only one of the $\Phi_i$ (which we assume is $\Phi_1$) is unbounded. Therefore 
 \begin{equation}
   A_{s_{p-1}} \cap  \{\lvert X_{s_{p-1}}- X_{\kappa_\gamma(s_{p-1})}\rvert\leq 1 \} \subset A_{s_{p-1}} \cap  \{\lvert X_{s_{p-1}}\rvert\leq R+1 \}, \nonumber
 \end{equation}
 and so 
  \begin{align}
     T_2&:=1_{A_{s_1}} \cdot  1_{A_{s_2}} \cdot ... \cdot 1_{A_{s_{p-1}}}   \cdot 1_{\{\lvert X_{s_{p-1}}- X_{\kappa_\gamma(s_{p-1})}\rvert\leq 1\}} \cdot \int^t_{s_{p-1}} e^{qs_p} 1_{A_{s_p}}ds_p\nonumber \\
     &\leq 1_{A_{s_1}} \cdot  1_{A_{s_2}} \cdot ... \cdot 1_{A_{s_{p-1}}}   \cdot 1_{\{\lvert X_{s_{p-1}}\rvert  \leq R+1\}}\cdot \int^t_{s_{p-1}} e^{qs_p} 1_{A_{s_p}}ds_p.\nonumber
 \end{align}
 Now, due to the Euler-Scheme structure of the process we must split again. Specifically, recalling the forward projection onto $\{0,\gamma, 2\gamma, ... \}$ given in Sectiom \ref{subsec: setup} as $\underline{\kappa}_\gamma(t)$, we split into the case where  $\lvert X_{\underline{\kappa}_\gamma(s_{p-1})}- X_{s_{p-1}}\rvert>1$ and where $\lvert X_{\underline{\kappa}_\gamma(s_{p-1})}- X_{s_{p-1}}\rvert\leq 1$. For the former one writes
\begin{align}
 T_{2,1}&:=1_{A_{s_1}} \cdot  1_{A_{s_2}} \cdot ... \cdot 1_{A_{s_{p-1}}}  \cdot 1_{\{\lvert X_{s_{p-1}}\rvert\leq R+N\}} \cdot 1_{ \{ \lvert X_{\underline{\kappa}_\gamma(s_{p-1})}- X_{s_{p-1}}\rvert>1 \}}\cdot \int^t_{s_{p-1}} e^{qs_p} 1_{A_{s_p}}ds_p  \nonumber \\
 &=1_{A_{s_1}} \cdot  1_{A_{s_2}} \cdot ... \cdot 1_{A_{s_{p-1}}}   \cdot 1_{\{\lvert X_{s_{p-1}}\rvert\leq R+1\}} \cdot 1_{ \{ \lvert X_{\underline{\kappa}_\gamma(s_{p-1})}- X_{s_{p-1}}\rvert^p>1 \}}\cdot \int^t_{s_{p-1}} e^{qs_p} 1_{A_{s_p}}ds_p
\end{align}
 which one controls in the same way as $T_1$ (by bounding each indicator function other than the last by $1$), to obtain
 \begin{equation}\label{eq: T21}
E T_{2,1}\leq c\gamma^{p/2}e^{qt}.
\end{equation}
Furthermore, one may use the triangle inequality to write
\begin{align}
 T_{2,2}&:=1_{A_{s_1}} \cdot  1_{A_{s_2}} \cdot ... \cdot 1_{A_{s_{p-1}}}   \cdot 1_{\{\lvert X_{s_{p-1}}\rvert\leq R+N\}} \cdot 1_{ \{ \lvert X_{\underline{\kappa}_\gamma(s_{p-1})}- X_{s_{p-1}}\rvert\leq 1 \}} \cdot \int^t_{s_{p-1}} e^{qs_p} 1_{A_{s_p}}ds_p \nonumber \\
 &\leq 1_{A_{s_1}} \cdot  1_{A_{s_2}} \cdot ... \cdot 1_{A_{s_{p-1}}}    \cdot 1_{\{\lvert  X_{\underline{\kappa}_\gamma(s_{p-1})}\rvert\leq R+2 \}}\cdot \int^t_{s_{p-1}} e^{qs_p} 1_{A_{s_p}}ds_p,
\end{align}
so that splitting the integral over $[s_{p-1},t]$ into integrals on $[s_{p-1}, \underline{\kappa}_\gamma(s_{p-1})]$ and $[\underline{\kappa}_\gamma(s_{p-1}),t]$, and bounding $1_{A_{s_p}}\leq 1$ for the former, one has that
\begin{align}
 T_{2,2}&\leq 1_{A_{s_1}} \cdot  1_{A_{s_2}} \cdot ... \cdot 1_{A_{s_{p-1}}}\cdot \gamma e^{qt}\nonumber \\
 &+1_{A_{s_1}} \cdot  1_{A_{s_2}} \cdot ... \cdot 1_{A_{s_{p-1}}}   \cdot 1_{\{\lvert  X_{\underline{\kappa}_\gamma(s_{p-1})}\rvert\leq R+2 \}}\cdot \int^t_{\underline{\kappa}_\gamma(s_{p-1})}  e^{qs_p} 1_{A_{s_p}}ds_p ,\nonumber
\end{align}
so that
\begin{align}
 E[&T_{2,2} \vert \mathcal{F}_{\underline{\kappa}_\gamma(s_{p-1})}  ]\nonumber \\
 &\leq 1_{A_{s_1}} \cdot  1_{A_{s_2}} \cdot ... \cdot 1_{A_{s_{p-1}}}\biggr(\gamma e^{qt}+ 1_{\{\lvert  X_{\underline{\kappa}_\gamma(s_{p-1})}\rvert\leq R+2 \}} \biggr)E\biggr [\int^t_{\underline{\kappa}_\gamma(s_{p-1})} e^{qs_p} 1_{A_{s_p}}ds_p\biggr \vert \mathcal{F}_{\underline{\kappa}_\gamma(s_{p-1})} \biggr ].\nonumber
\end{align}
Now observe that, by Lemma \ref{lemma: Markov property} and Lemma \ref{lemma: cross bdd p=1 2}, one may write
\begin{align}\label{eq: cond exp bdd}
E\biggr [\int^t_{\underline{\kappa}_\gamma(s_{p-1})} e^{qs_p} 1_{A_{s_p}}ds_p\biggr \vert \mathcal{F}_{\underline{\kappa}_\gamma(s_{p-1})} \biggr ]&=E^{X_{\underline{\kappa}_\gamma(s_{p-1})}}\biggr [\int^{t-\underline{\kappa}_\gamma(s_{p-1})}_0 e^{q(s_p+\underline{\kappa}_\gamma(s_{p-1}))} 1_{A_{s_p}}ds_p\biggr] \nonumber \\
& \leq c(1+\lvert X_{\underline{\kappa}_\gamma(s_{p-1})} \rvert^3)\gamma^{1/2}e^{qt},
\end{align}
and therefore, the product of \eqref{eq: cond exp bdd} with $1_{\{\lvert  x_{\underline{\kappa}_\gamma(s_{p-1})}\rvert\leq R+2N \}}$ is bounded by $c(1+(R+2N)^3)\gamma^{1/2}e^{qt}$, and so
\begin{equation}\label{eq: T22}
ET_{2,2}\leq cE1_{A_{s_1}} \cdot  1_{A_{s_2}} \cdot ... \cdot 1_{A_{s_{p-1}}} \gamma^{1/2}e^{qt}. 
\end{equation}
Then summing \eqref{eq: T1}, \eqref{eq: T21} and \eqref{eq: T22} one achieves \eqref{eq: induct bound}, and therefore the result follows.
\end{proof}
\section{Proofs of Theorems \ref{th: main theorem} and \ref{th: main theorem 2}}

\begin{proof}[Proof of Theorem \ref{th: main theorem}]
Let $(Y_t)_{t\geq0}$ be a solution to \eqref{eq: main SDE}, satisfying $Y_0=\xi$, and driven by the same noise as the continuous interpolation $(X_t)_{t\geq0}$ of \eqref{eq: ULA} given in \eqref{eq: cont interp}. Since $\mathcal{L}(x_n)=\mathcal{L}(X_{n\gamma})$, we may use Lemma \ref{lemma: ergodic bound} and the properties of the Wasserstein distance to write
\begin{align}
W_p(\pi_\beta, \mathcal{L}(x_n)) &\leq W_p(\pi_\beta, \mathcal{L}(Y_{n\gamma}))+W_p(\mathcal{L}(Y_{n\gamma}), \mathcal{L}(X_{n\gamma})) \nonumber \\
& \leq e^{-\mu n \gamma}+(E\lvert X_{n\gamma}-Y_{n\gamma}\rvert^p)^{1/p},
\end{align}
so that if we define 
\begin{equation}\label{eq: e_t def}
    e_t:=Y_t-X_t,
\end{equation}
then to prove the Theorem it is sufficient to show
\begin{equation}\label{eq: uniform numerics bound}
\sup_{t\geq 0}E\lvert e_t \rvert^p \leq c\gamma^{p/2}.
\end{equation}
Since the diffusion of $e_t$ vanishes, we may begin by applying the chain rule to obtain
\begin{align}\label{eq: thm1 splitting}
e^{\frac{p \mu}{2} t} \lvert e_t\rvert^p&=\int^t_0 \frac{p \mu}{2} e^{\frac{p \mu}{2} s} \lvert e_s\rvert^p ds+\int^t_0 pe^{\frac{p \mu}{2} s}\langle \nabla U(Y_s)-\nabla U(X_{\kappa_\gamma(s)}), e_s\rangle \lvert e_s\rvert^{p-2} ds \nonumber \\
& = \int^t_0 \frac{p \mu}{2} e^{\frac{p \mu}{2} s} \lvert e_s\rvert^p ds+\int^t_0 pe^{\frac{p \mu}{2} s}\langle \nabla U(Y_s)-\nabla U(X_s), e_s\rangle \lvert e_s\rvert^{p-2}ds  \nonumber \\
&+\int^t_0 pe^{\frac{p \mu}{2} s}\langle \nabla U(X_s)-\nabla U(X_{\kappa_\gamma(s)}), e_s\rangle \lvert e_s\rvert^{p-2}ds \nonumber \\
&=: \frac{p \mu}{2} \int^t_0  e^{\frac{p \mu}{2} s} \lvert e_s\rvert^p ds+r_1(t)+r_2(t).
\end{align}
One can then apply the convexity assumption \Cref{assmp: strong mon} to bound $r_1(t)$ as
\begin{equation}\label{eq: r1}
r_1(t)\leq -p \mu\int^t_0  e^{\frac{p \mu}{2} s} \lvert e_s\rvert^p ds.
\end{equation}
For $r_2(t)$, let $\psi\in C^\infty_c(\mathbb{R}^d)$ satisfy $\psi(x)=1$ for $x\in B_R$ (where $R>0$ is as given in Section \ref{subsec: assmp}). Then one may write $\nabla U=f_1+f_2$ for
\begin{equation}
f_1:=\psi \nabla U,\;\;\; f_2:=(1-\psi)\nabla U,
\end{equation}
so that $f_1$ is piecewise Lipschitz on the $\Phi_i$ due to \Cref{assmp: piecewise Lip}, and $f_2$ is Lipschitz (since it vanishes on $B_R$, and therefore $supp ( f_2) \subset \Phi_1$). Furthermore, since $\psi$ has compact support, $f_1$ is bounded. Then one uses this splitting and the event $A_s$ given in \eqref{eq: A def} to split as
\begin{align}
r_2(t)&=\int^t_0 pe^{\frac{p \mu}{2} s}  \langle f_1(X_{\kappa_\gamma(s)})-f_1(X_s), e_s\rangle \lvert e_s\rvert^{p-2} \cdot  1_{\Omega \setminus A_s} ds\nonumber \\
&+\int^t_0 pe^{\frac{p \mu}{2} s}   \langle f_1(X_{\kappa_\gamma(s)})-f_1(X_s), e_s\rangle \lvert e_s\rvert^{p-2} \cdot 1_{A_s}  ds \nonumber \\
&+\int^t_0 pe^{\frac{p \mu}{2} s}\langle f_2(X_{\kappa_\gamma(s)})-f_2(X_s), e_s\rangle \lvert e_s\rvert^{p-2} ds \nonumber \\
&=: r_{2,1}(t)+r_{2,2}(t)+r_{2,3}(t).
\end{align}
For $r_{2,1}(t)$, since $X_{\kappa_\gamma(s)}, X_s \in \Phi_i$ for some $i=1,2,...,n_1$ one may apply \Cref{assmp: piecewise Lip} and Young's inequality (see Note \ref{note: youngs ineq}) to bound as
\begin{equation}\label{eq: r21}
r_{2,1}(t)\leq c\int^t_0 e^{\frac{p \mu}{2} s} \lvert X_s-X_{\kappa_\gamma(s)}\rvert^pds+ \frac{p \mu}{2} \int^t_0 e^{\frac{p \mu}{2} s} \lvert e_s\rvert^p  ds.
\end{equation}
For $r_{2,2}(t)$, one uses the boundedness of $f_1$ to write
\begin{align}
r_{2,2}(t)\leq c \int^t_0 e^{\frac{p \mu}{2} s} \lvert e_s\rvert^{p-1}1_{A_s}  ds= c \int^t_0 (e^{\frac{\mu (p-1)}{2} s} \lvert e_s\rvert^{p-1}) (e^{\frac{\mu }{2} s} 1_{A_s})  ds
\end{align}
so that we may pull out the supremum and apply Young's inequality to obtain
\begin{align}\label{eq: r22}
r_{2,2}(t)\leq c \sup_{u\in [0,t]} e^{\frac{\mu (p-1)}{2} u} \lvert e_u\rvert^{p-1}\int^t_0 e^{\frac{\mu }{2} s} 1_{A_s} ds\leq \frac{1}{2} \sup_{u\in [0,t]} e^{\frac{p \mu}{2} u} \lvert e_u\rvert^p+c\biggr(\int^t_0 e^{\frac{\mu }{2} s} 1_{A_s} ds\biggr)^p.
\end{align}
Finally for $r_{2,3}(t)$, one uses Young's inequality and the fact $f_2$ is Lipschitz to bound as
\begin{align}\label{eq: r23}
r_{2,3}(t)&\leq c\int^t_0 e^{\frac{p \mu}{2} s}\lvert f_2(X_s)-f_2(X_{\kappa_\gamma(s)})\rvert^p+\frac{p \mu}{2} \int^t_0 e^{\frac{p \mu}{2} s} \lvert e_s\rvert^p ds \nonumber \\
&\leq c\int^t_0 e^{\frac{p \mu}{2} s}\lvert X_s-X_{\kappa_\gamma(s)}\rvert^p ds+\frac{p \mu}{2} \int^t_0 e^{\frac{p \mu}{2} s} \lvert e_s\rvert^p ds.
\end{align}
Therefore, substituting \eqref{eq: r1}, \eqref{eq: r21},\eqref{eq: r22} and \eqref{eq: r23} into \eqref{eq: thm1 splitting}, one sees that
\begin{equation}
e^{\frac{p \mu}{2} t} \lvert e_t\rvert^p \leq c\int^t_0 \lvert X_s-X_{\kappa_\gamma(s)}\rvert^pe^{\frac{p \mu}{2} s} ds+\frac{1}{2} \sup_{u\in [0,t]} e^{\frac{p \mu}{2} u} \lvert e_u\rvert^p+c\biggr(\int^t_0 e^{\frac{\mu }{2} s} 1_{A_s} ds\biggr)^p,
\end{equation}
so that since the RHS is increasing as a function of $t>0$, one may take the supremum of the LHS for $u\in [0,t]$, move over the second term on the RHS (and multiply by $2$) to obtain
\begin{equation}
\sup_{u\in[0,t]}e^{\frac{p \mu}{2} u} \lvert e_u\rvert^p \leq c\int^t_0 \lvert X_s-X_{\kappa_\gamma(s)}\rvert^pe^{\frac{p \mu}{2} s} ds+c \biggr(\int^t_0 e^{\frac{\mu }{2} s} 1_{A_s} ds\biggr)^p.
\end{equation}
so that applying expectation, Lemma \ref{lemma: incr bound} and Proposition \ref{prop: exponential crossing}, and integrating the first term, one has
\begin{equation}
e^{\frac{p \mu}{2} t} E\lvert e_t\rvert^p \leq E \biggr (\sup_{u\in[0,t]}e^{\frac{p \mu}{2} u} \lvert e_u\rvert^p \biggr) \leq c\gamma^{p/2}e^{\frac{p \mu}{2} t},
\end{equation}
so that \eqref{eq: uniform numerics bound} follows by dividing through by $e^{\frac{p \mu}{2} t} $.
\end{proof}

\begin{proof}[Proof of Theorem \ref{th: main theorem 2}]
We follow the strategy above, but this time prove
\begin{equation}\label{eq: uniform numerics bound 2}
\sup_{t\geq 0}E\lvert e_t \rvert^p \leq c\gamma^{p/4},
\end{equation}
for $e_t$ as in \eqref{eq: e_t def}. By the chain rule again one has
\begin{align}\label{eq: chain rule calc}
 e^{p \mu t/4}E\lvert e_t\rvert^p &=\frac{p \mu}{4} \int^t_0 e^{\frac{p \mu}{4} s} \lvert e_s\rvert^p ds \nonumber \\
& -p\int^t_0 e^{\frac{p \mu}{4} s}\langle \nabla U(Y_s)-\nabla U(X_{\kappa_\gamma(s)}), Y_s-X_{\kappa_\gamma(s)}\rangle \lvert e_s\rvert^{p-2}ds \nonumber \\
&+ p\int^t_0 e^{\frac{p \mu}{4} s} \langle \nabla U(Y_s)-\nabla U(X_{\kappa_\gamma(s)}), X_s-X_{\kappa_\gamma(s)}\rangle \lvert e_s\rvert^{p-2} \nonumber \\
&=:\frac{p \mu}{4}\int^t_0 e^{\frac{p \mu}{4} s} \lvert e_s\rvert^p ds+w_1(t)+w_2(t).
\end{align}
One then calculates via the convexity assumption \Cref{assmp: strong mon}, the triangle inequality and Young's inequality that
\begin{align}
w_1(t)&\leq  -p\mu \int^t_0 \lvert Y_s-X_{\kappa_\gamma(s)}\rvert^2 \lvert e_s\rvert^{p-2}ds,
\end{align}
so writing $Y_s-X_{\kappa_\gamma(s)}=e_s+X_s-X_{\kappa_\gamma(s)}$ and expanding, one has
\begin{align}
w_1(t)\leq   \int^t_0 -p\mu e^{\frac{p \mu}{4} s}\lvert e_s\rvert^p-p\mu \lvert X_s-X_{\kappa_\gamma(s)}\rvert^2\lvert e_s\rvert^{p-2} -2p\mu \langle e_s, X_s-X_{\kappa_\gamma(s)} \rangle \lvert e_s\rvert^{p-2}ds ,
\end{align}
so that using Young's inequality (see Note \ref{note: youngs ineq}) one may bound the second term as
\begin{equation}
    -p\mu \lvert X_s-X_{\kappa_\gamma(s)}\rvert^2\lvert e_s\rvert^{p-2} \leq 0,
\end{equation}
and the third term as
\begin{equation}
-2p\mu \langle e_s, X_s-X_{\kappa_\gamma(s)} \lvert e_s\rvert^{p-2} \rangle \leq \frac{p\mu}{2}\lvert e_s\rvert^p+c\lvert X_s-X_{\kappa_\gamma(s)} \rvert^p,
\end{equation}
and therefore one obtains
\begin{align}
w_1(t)\leq   -\frac{p \mu}{2} \int^t_0  e^{\frac{p \mu}{4} s}\lvert e_s\rvert^pds+c\int^t_0 e^{\frac{p \mu}{4} s}\lvert X_s-X_{\kappa_\gamma(s)} \rvert^pds  .
\end{align}
Then applying Lemma \ref{lemma: incr bound} and integrating one has
\begin{equation}\label{eq: w1}
E w_1(t)\leq  -\frac{p \mu}{2}\int^t_0  e^{\frac{p \mu}{4} s}\lvert e_s\rvert^pds+c\gamma^{p/2}  e^{\frac{p \mu}{4} t}.
\end{equation}
Furthermore one uses Holders inequality to bound $w_2$ as
\begin{align}
E w_2(t)&\leq  \frac{p \mu}{4} \int^t_0 e^{\frac{p \mu}{4} s} E\lvert e_s\rvert^p ds +c\int^t_0 e^{\frac{p \mu}{4} s}E\lvert \langle \nabla U(Y_s)-\nabla U(X_{\kappa_\gamma(s)}), X_s-X_{\kappa_\gamma(s)}\rangle \rvert^{p/2} ds \nonumber \\
& \leq \frac{p \mu}{4} \int^t_0 e^{\frac{p \mu}{4} s} E\lvert e_s\rvert^p ds+ c\int^t_0 e^{\frac{p \mu}{4} s}(E\lvert  \nabla U(Y_s)-\nabla U(X_{\kappa_\gamma(s)}) \rvert^p)^{1/2} (E\lvert X_s-X_{\kappa_\gamma(s)}\rvert^p)^{1/2}.
\end{align}
Then one may bound the first factor in the second term by a constant, via the triangle inequality, the growth bound \Cref{assmp: growth assumption}, Lemmas \ref{lemma: moment bounds for cont} and \ref{lemma: moment bounds for scheme}, and the second factor by Lemma \ref{lemma: incr bound}, so that 
\begin{align}\label{eq: w2}
w_2(t)&\leq \frac{p \mu}{4} E \lvert e_t\rvert^p+c\gamma^{p/4}e^{\frac{p \mu}{4} t}.
\end{align}
It follows that one obtains \eqref{eq: uniform numerics bound 2} by substituting \eqref{eq: w1} and \eqref{eq: w2} into \eqref{eq: chain rule calc} and dividing through by $e^{\frac{p \mu}{4} t}$.
\end{proof}
\section{Examples}
We consider the case of Bayesian inference with a Gaussian prior and a Laplacian likelihood. Note that any convex optimisation problem with gradient satisfying \Cref{assmp: piecewise Lip} or \Cref{assmp: growth assumption} could be made to fit our assumptions by the addition of an $L^2$ regulariser to $U$. Our choice of Laplacian (or $\ell^1$) priors follows the example in \cite{doi:10.1137/16M1108340}, where they are used in the context of image reconstruction.
\subsection{Bayesian inference in one dimension}
Let us fix hyperparameters $b>0$, $\mu _0\in \mathbb{R}$ and $\sigma>0$, and suppose one has a prior distribution $\theta \sim \mathcal{N}(\mu_0, \sigma^2)$ and a likelihood $y \propto exp(-\frac{\lvert y -\theta \rvert }{b}) $. Then if one has observations $y_1, y_2,...,y_k \in \mathbb{R}$ of $y$, the Bayesian posterior for $\theta$ is
\begin{equation}\label{eq: posterior 1d}
p(\theta \vert y ) \sim \pi_\beta  \propto exp \biggr(-b^{-1}\sum_{i=1}^k \lvert y_i -\theta \rvert -\frac{( \theta - \mu_0)^2}{2\sigma^2} \biggr) ,
\end{equation}
for $U(\theta):= 2b^{-1}\sum_{i=1}^k \lvert y_i -\theta \rvert +\frac{1}{2\sigma^2}( \theta - \mu_0 )^2$, $\beta=1$ so that $\pi_1=\pi \propto e^{-U}$ as in the intoduction. Let us show that $U$ satisfies \Cref{assmp: piecewise Lip} and \Cref{assmp: strong mon}. Firstly note that $\nabla U$ exists everywhere except at $y_1, y_2,...,y_k \in \mathbb{R}$, so that since points are $1$-dimensional hypersurfaces one can set $\Sigma_i:=y_i$. Furthermore, for $\theta \in \mathbb{R}\setminus \{y_1, y_2,...,y_k\}$ one has
\begin{equation}
\nabla U(\theta) =2 b^{-1}\sum_{i=1}^k sign(\theta-y_i)+\sigma^{-2}(\theta-\mu_0),
\end{equation}
so that $\nabla U$ is clearly Lipschitz on all intervals between the $y_i$ and $\{-\infty, \infty\}$ and therefore $\Cref{assmp: piecewise Lip}$ is satisfied. Now let $\theta_1, \theta_2 \in \mathbb{R}\setminus \{y_1, y_2,...,y_k\}$ be arbitrary. Then since $\theta \mapsto sign(\theta-y_i)$ is increasing one has
\begin{align}
(\nabla U(\theta_1) - \nabla U(\theta_2)) (\theta_1-\theta_2)&=2b^{-1}\sum_{i=1}^k (sign(\theta_1-y_i)-sign(\theta_2-y_i))(\theta_1-\theta_2)+\sigma^{-2}(\theta_1-\theta)^2 \nonumber \\
& \geq \sigma^{-2}(\theta_1-\theta)^2 .
\end{align}
Therefore $\nabla U$ satisfies \Cref{assmp: strong mon} with $\mu=\sigma^{-2}$, and therefore providing one starts with initial condition $\xi$ such that $P(\xi=y_i)=0$ for $i=1,2,...,k$, one can apply Theorem \ref{th: main theorem} for \eqref{eq: ULA} applied for $U$ and $\beta$ given as above, in order to sample from \eqref{eq: posterior 1d}.

\subsection{Bayesian inference in higher dimensions}

Let us consider the same situation as above, but in $d$ dimensions, i.e. where one has a prior distribution $\theta \sim \mathcal{N}(\mu_0, \sigma^2 A)$, where $\mu_0\in \mathbb{R}^d$, $\sigma>0$ and $A\in \mathbb{R}^{d\times d}$ is a positive definite matrix with largest eigenvalue $1$, and a likelihood $y \propto exp(-\frac{\lvert y -\theta \rvert }{b}) $ for $y$ and $\theta$ taking values in $\mathbb{R}^d$. Then similarly to before
\begin{equation}\label{eq: posterior higher dim}
p(\theta \vert y ) \sim \pi_\beta  \propto exp \biggr(-b^{-1}\sum_{i=1}^k \lvert y_i -\theta \rvert -\frac{( \theta - \mu_0)^TA^{-1}( \theta - \mu_0)}{2\sigma^2} \biggr) ,
\end{equation}
so for $U(\theta):= 2b^{-1}\sum_{i=1}^k \lvert y_i -\theta \rvert +\frac{1}{2\sigma^2}( \theta - \mu_0)^TA^{-1}( \theta - \mu_0)$ one has
\begin{equation}
\nabla U(\theta) =2 b^{-1}\sum_{i=1}^k \frac{\theta-y_i}{\lvert \theta-y_i \rvert}+A^{-1}\sigma^{-2}(\theta-\mu_0).
\end{equation}
Therefore, since $A^{-1}$ must have smallest eigenvalue equal to $1$, one has
\begin{align}
\langle \nabla U(\theta_1) - \nabla U(\theta_2), \theta_1-\theta_2 \rangle &\geq 2b^{-1}\sum_{i=1}^k \langle \frac{\theta_1-y_i}{\lvert \theta_1-y_i \rvert}-\frac{\theta_2-y_i}{\lvert \theta_2-y_i \rvert}, \theta_1-\theta_2 \rangle+\sigma^{-2}\lvert \theta_1-\theta \rvert^2 .
\end{align}
and one calculates
\begin{align}
\langle \frac{\theta_1-y_i}{\lvert \theta_1-y_i \rvert}-\frac{\theta_2-y_i}{\lvert \theta_2-y_i \rvert}, \theta_1-\theta_2 \rangle &= \langle \frac{\theta_1-y_i}{\lvert \theta_1-y_i \rvert}-\frac{\theta_2-y_i}{\lvert \theta_2-y_i \rvert}, \theta_1-y_i-(\theta_2 -y_i) \rangle \nonumber \\
& = \lvert \theta_1-y_i\rvert+ \lvert \theta_2-y_i\rvert-\biggr (\frac{1}{\lvert \theta_1-y_i \rvert}+\frac{1}{\lvert \theta_2-y_i \rvert} \biggr)\langle \theta_1-y_i, \theta_2-y_i \rangle \nonumber \\
&\geq 0,
\end{align}
therefore proving $\nabla U$ obeys \Cref{assmp: strong mon} with $\mu=\sigma^{-2}$ as before. However this time one can show that $\nabla U$ does not obey \Cref{assmp: piecewise Lip} (since it is not piecewise-Lipschitz around any of the $y_i$) but instead the weaker assumption \Cref{assmp: growth assumption}, so in this situation one may apply Theorem \ref{th: main theorem 2} but not Theorem \ref{th: main theorem} to sample from \eqref{eq: posterior higher dim}.

\bibliographystyle{plain}
\bibliography{ref}
\end{document}